\DeclareMathOperator*{\argmin}{argmin}
\DeclareMathOperator{\ksd}{KSD}
\DeclareMathOperator{\wksd}{wKSD}
\DeclareMathOperator{\tr}{tr}
\DeclareMathOperator{\SO}{SO}
\DeclareMathOperator{\SE}{SE}
\DeclareMathOperator{\Div}{div}
\DeclareMathOperator{\vmf}{vMF}
\DeclareMathOperator{\Log}{Log}
\DeclareMathOperator{\vectz}{vec}
\DeclareMathOperator{\mle}{MLE}
\newtheorem{theorem}{Theorem}
\newtheorem*{theorem*}{Theorem}
\newtheorem{corollary}{Corollary}[theorem]
\newtheorem{lemma}{Lemma}
\newtheorem{example}{Example}
\newtheorem{proposition}{Proposition}
\begin{document}

\title{Kernel Stein Discrepancy on Lie Groups:\\Theory and Applications}

\author{Xiaoda Qu, Xiran Fan, Baba C. Vemuri \thanks{Manuscript received February 2024}}




\maketitle

\begin{abstract}
Distributional approximation is a fundamental problem in machine learning with numerous applications across all fields of science and engineering and beyond. The key challenge in most approximation methods is the need to tackle the intractable normalization constant present in the candidate distributions used to model the data. This intractability is especially common for distributions of manifold-valued random variables such as rotation matrices, orthogonal matrices etc. In this paper, we 
focus on the distributional approximation problem in Lie groups since they are frequently encountered in many applications including but not limited to, computer vision, robotics, medical imaging and many more. We
present a novel Stein's operator on Lie groups leading to a kernel Stein discrepancy (KSD) which is a normalization-free loss function. We present several theoretical results characterizing the properties of this new KSD on Lie groups and its minimizer namely, the minimum KSD estimator (MKSDE). Properties of MKSDE are presented and proved, including strong consistency, CLT and a closed form of the MKSDE for the von Mises-Fisher, the exponential and the Riemannian normal distributions on \(\SO(N)\). Finally, we present several experimental results depicting advantages of MKSDE over maximum likelihood estimation.
\end{abstract}


\section{Introduction (Problem Motivation and Literature Review)}

Distributional approximation is a ubiquitous problem in science and engineering with numerous applications. 
This fundamental problem can be formulated as follows: Suppose we wish to approximate a density \(q\) by a family of candidate probability densities  \(\{p_\alpha\}\), either parameterized by some finite dimensional parameters or non-parametrically represented via say, a neural network. The approximation is invariably accomplished via the optimization of a loss (cost) function \(D\), which measures the dissimilarity between distributions, i.e., \(\alpha^*:=\argmin D(q\Vert p_\alpha )\). A vanilla application of this fundamental technique known as maximum likelihood estimation (MLE), is an important constituent of a wide variety of algorithms, e.g., the well known expectation-maximization (EM) algorithm, the recursive stochastic filter namely the Kalman filter \cite{Sorenson_85} and many others.

Most commonly used loss functions include likelihood or log-likelihood functions and the KL-divergence. However, in many cases, the family of candidate densities \(\{p_\alpha\}\) are only known up to a highly intractable normalizing constant. In practice, one must approximate the normalizing constant and its derivative w.r.t \(\alpha\) numerically in each step of say a gradient decent method employed in the optimization.  {\it An intractable normalizing constant leads to a cost-accuracy trade-off i.e., higher the computational cost, better the accuracy of approximation. If one can circumvent this intractable constant altogether and yet achieve high accuracy in parameter estimation, it would be ideal and this is precisely what we will achieve in this paper.}

Such a situation arises even in high dimensional Euclidean space, not to mention the non-flat spaces. In many computer vision and machine learning applications such as, unmanned aerial vehicle tracking, aircraft trajectory analysis, human motion analysis etc., one encounters data in different Lie groups, as it is the most appropriate space to represent different types of object motions e.g., rotations, affine motions etc. Lie groups 
capture the underlying intrinsic geometric structure of these transformations that represent the object motions. In contrast, a vector space structure proves to be inadequate for most tasks in manipulating such data. For example, the arithmetic mean  of several rotational motions -- rotation (orthogonal) matrices -- of an object is no longer a rotation. Therefore, to model the rotation of objects, it is common to use the geometric structure of the special orthogonal group \(\SO(N):=\{R\in \mathbb{R}^{N\times N}:R^\top R=I, \det(R)=1\}\) instead of \(\mathbb{R}^{N\times N}\). 

The issue of normalizing constant is significant for distributions in Lie groups. The integral on Lie groups w.r.t its volume measure, though well-defined, most often is computationally intractable. Even the most widely-used, well-behaved probability families on Lie group have an intractable normalizing constant. For example, von Mises-Fisher distribution (vMF) \cite{downs1972orientation} and Bingham distribution \cite{hoff2009simulation, khatri1977mises} on \(\SO(N)\), whose normalizing constants are hypergeometric functions of the parameters, are widely used in pose estimation or rotation estimation with uncertainty \cite{deng2022deep,gilitschenski2020deep,peretroukhin2020smooth,prokudin2018deep} and Bayesian attitude estimation \cite{lee2018bayesian,wang2021matrix} and many others. The Riemannian normal distribution \cite{pennec2006intrinsic}, with a highly intractable normalizing constant, is encountered in probabilistic principal geodesic analysis (PPGA)
\cite{zhang2013probabilistic,zhang2019mixture}. The one-axis model on special Euclidean group \(\SE(3)\)\cite[Ch. 6]{ley2018applied}, for modeling the rigid body motion \cite{oualkacha2012estimation} contains a vMF component. These are just a few examples where intractability of the normalizing constant is encountered.

A normalization-free loss, called \emph{kernel Stein discrepancy} (KSD), was first proposed in \cite{oates2017control,chwialkowski2016kernel}, which combines Stein's method with the theory of reproducing kernel Hilbert space (RKHS). The KSD has been extensively studied recently in different aspects of a general framework \cite{ley2017stein,mijoule2018stein,mijoule2023stein}, characterization scope \cite{gorham2015measuring,gorham2017measuring}, 
asymptotic properties regarding its minimization \cite{barp2019minimum,oates2022minimum}, and its relevant applications \cite{chwialkowski2016kernel,liu2016kernelized,matsubara2022robust}. It involves an RKHS \(\mathcal{H}_k\) of a kernel function \(k\) on \(\mathbb{R}^d\), termed as \emph{Stein's class}  here, and a \emph{Stein operator} \(S_{p_\alpha}\). The Stein operator \(\mathcal{S}_{p_\alpha}\), depends on the candidate distribution \(p_\alpha\) but is free from its normalizing constant, maps the elements in \(\mathcal{H}^d_k\), the \(d\)-times product of \(\mathcal{H}_k\), to real-valued functions. In addition, they must satisfy \emph{Stein's identity}, that is, \(\mathbb{E}_p[\mathcal{S}_p f]=0\) for all \(f\in \mathcal{H}^d_k\). Then, the KSD between \(p\) and another distribution \(q\) is defined as,
\begin{equation}\label{KSDdef}
    \ksd(p,q):= \sup\big\{  \mathbb{E}_q[\mathcal{S}_p f]: f\in\mathcal{H}^d_k, \Vert f\Vert_{\mathcal{H}^d_k}\leq 1 \big\}.
\end{equation}

\emph{In contrast to the KSD on \(\mathbb{R}^d\), the existing generalizations of KSD to non-flat manifolds are somewhat lacking.} 
In \cite{barp2018riemann} Barp et al. developed a novel KSD limited to compact Riemannian manifolds and studied its convergence property using a higher order Sobolev space as the RKHS. 
 Recently, in \cite{xu2021interpretable} Xu and Matsuda implemented the Euclidean KSD in a local coordinate chart, thus the KSD in their work is only applicable to distributions supported within the local coordinate chart. A more detailed discussion on these two works will be presented in section \ref{KSDE}.

\subsection*{Our Contributions and Paper Organization} 
In this work, we propose a novel Stein's operator (see equation \ref{StOG}) making use of the structure of Lie groups, leading to a KSD with a closed form (see equation \ref{KSDForm}), {\it a normalization-free loss function applicable to all Lie groups}. The asymptotic properties of the KSD and its minimizer are established in theorems \ref{KSDasymptotic}, \ref{KSDasymptotic-stronger}, \ref{MKSDEconsistency} and \ref{MKSDECLT} respectively. Furthermore, we present applications of the KSD to important problems on the widely encountered Lie group \(\SO(N)\) in \S \ref{ExpSO(N)} and example \ref{vMF-SO(N)}. Specifically, experiments \ref{MKSDEvsMLE} will address the issue of the normalization constant that arises in MLE and how the estimation obtained using proposed normalization-free KSD yields far more accurate parameter estimates compared to MLE that uses approximations for the normalization constant.

\paragraph{Organization:} The rest of this paper is organized as follows: In Section \ref{background}, we present the mathematical preliminaries required in the rest of the paper. Section \ref{KSD-LG} contains the key theoretical results involving the derivation of the KSD on Lie groups. In Section \ref{MKSDE-section} we present the minimum kernel Stein discrepancy estimator and its asymptotic properties, and analyze different practical situations that arise in applications. Proofs of all theorems are included in the supplementary. Finally, we present experiments in Section \ref{experiments} and conclude in Section \ref{conclusion}. 
\section{Background} \label{background}
In this section, we present a brief overview of relevant mathematics on Lie groups and the KSD on \(\mathbb{R}^d\).
For a more detailed discussion on Lie groups, we refer the reader to \cite{helgason1979differential}.

\subsection{Manifolds}

A manifold $M$ is a second countable Hausdorff space that is locally homeomorphic to an open set in $\mathbb{R}^d$. The number $d$ is the \emph{dimension} of $M$. Such local homeomorphisms are called \emph{charts}. A smooth structure on $M$ is assigned via a collection of charts that covers the manifold such that the composition of every two charts is smooth on $\mathbb{R}^d$. A manifold with a smooth structure is called a \emph{smooth manifold}.

\subsection{Lie groups} \label{LG}

A \emph{Lie group} \(G\) is a group and a smooth \(d\)-dimensional manifold whose multiplication and group inverse are smooth. For any fixed \(g\in G\), the \emph{left (resp. right) multiplication action} \(L_g: h\mapsto g h\), (resp. \(R_g: h\mapsto h g\)) is smooth, whose differential is denoted by $d L_g$ (resp. $d R_g$ ) and its
pullback is denoted by \(L_g^*\) (resp. \(R_g^*\) ).

\paragraph{Left-invariant fields} A vector field is a smooth assignment of tangent vectors to the tangent space at each point of $G$. A \emph{left-invariant field} is a vector field invariant w.r.t left-translations, i.e., \(d L_g\) or \(L_g^*\) for any \(g\in G\). Clearly, if a group of left-invariant vector fields is linearly independent at some point, then they are linearly independent at every point. By the term \emph{left-invariant vector (resp. covector or tensor) basis}, we mean a collection of left-invariant vector (resp. covector or tensor) fields that forms a basis at every point. Given a left-invariant basis, any other left-invariant field can be written as the linear combination of the given basis.

\paragraph{Volume measure and modular functions} Akin to the Lebesgue measure on $\mathbb{R}^d$, the canonical dominating measure on a Lie group is usually taken as the \emph{left-invariant Haar measure} $\mu$, defined via left invariant volume form $\Omega$ or Haar theorem, see \cite{lee2013smooth} and \cite{evans2018measure}. The left-invariant measure $\mu$ satisfies \(\mu(g S)=\mu(S)\) for all \(g\in G\) and Borel set \(S\), which is unique up to a constant. For each fixed \(g\in G\), note that $\mu_g(S):=\mu(Sg)$ is still a left-invariant measure, thus there exists a number \(\Delta_g>0\) such that $\mu_g=\Delta(g) \mu$. The function \(\Delta: g\mapsto \Delta_g\) is the \emph{modular function } of \(G\). We say that \(G\) is \emph{unimodular}, if \(\Delta\equiv 1\), e.g., $\SO(N)$, $\SE(N)$, etc. All Abelian or compact Lie groups are unimodular. All probability densities used in this paper are with respect to the volume measure \(\mu\).

\subsection{KSD on \texorpdfstring{\(\mathbb{R}^d\)}{R\^d}
and existing generalizations} \label{KSDE}

Suppose \(\mathcal{H}_k\) is an RKHS on \(\mathbb{R}^d\) of a kernel \(k\). Let \(\mathcal{H}^d_k:=\mathcal{H}_k\times\cdots\times\mathcal{H}_k\) be the \(d\)-times product of \(\mathcal{H}_k\), endowed with the inner product
\( \langle f,g\rangle_{\mathcal{H}_k^d}=\sum_{l=1}^d\langle f_l,g_l \rangle_{\mathcal{H}_k} \) for \(f=(f_1,\dots,f_d)\) and \(g=(g_1,\dots,g_d)\) in \(\mathcal{H}^d_k\). The most commonly used Stein operator \(\mathcal{S}_p\) on \(\mathbb{R}^d\) also adopted in \cite{barp2019minimum,chwialkowski2016kernel,gorham2017measuring,liu2016kernelized,matsubara2022robust,oates2022minimum}, denoted here by $T_p$, has the form 
\begin{equation}\label{StOE}
    \mathcal{T}_p: f \mapsto \sum_{l=1}^d \left[\frac{\partial f_l}{\partial x^l} + f_l \frac{\partial}{\partial x^l} \log p\right],\quad f\in \mathcal{H}^d_k.
\end{equation}

Then the KSD is defined by 
\[
\ksd(p,q):= \sup\big\{  \mathbb{E}_q[\mathcal{T}_p f]: f\in\mathcal{H}^d_k, \Vert f\Vert_{\mathcal{H}^d_k}\leq 1 \big\}.
\]
Clearly, one could easily see from (\ref{KSDdef}) and Stein's identity that the KSD is always non-negative and satisfies \(\ksd(p,p)=0\). In fact, as discussed in \cite{barp2019minimum,chwialkowski2016kernel,gorham2017measuring,liu2016kernelized,matsubara2022robust,oates2022minimum}, if the kernel function \(k\) is \(C_0\)-universal \cite{carmeli2010vector}, then KSD will uniquely characterize \(p\), i.e., for \(p\) and \(q\) regular enough, we have \(\ksd(p,q)=0 \Leftrightarrow p=q\). Notably, the computation of \(T_p\) is free from the normalizing constant of \(p\), so is the corresponding KSD in (\ref{KSDdef}). 

As one of the significant application of KSD, the \emph{minimum kernel Stein discrepancy estimator} (MKSDE), first proposed by \cite{barp2019minimum} and further researched in \cite{matsubara2022robust,oates2022minimum}, minimizes the KSD between a parametrized family \(p_\theta\) and the empirical distribution of a group of samples to obtain an approximation \(p_{\theta^*}\) of the underlying distribution of the samples. The MKSDE has good convergent properties and thus can serve as a normalization-free alternative of MLE.

Since the issue of normalization is even more severe on non-flat spaces, one would naturally wish to generalize such a method to manifolds. In \cite{xu2021interpretable}, Xu and Matsuda use a local chart to construct a \emph{differential form} $\omega$, such that its differential $d\omega=p\mathcal{S}_p f \Omega$ ($\Omega$ is the volume form), and then apply Stokes's theorem to conclude that $\mathbb{E}_p[\mathcal{S}_p f]=\int p \mathcal{S}_p f \Omega=\int d\omega=0 $. However, as their construction relies on the local coordinates,  $p\mathcal{S}_p f\Omega$ does not extend beyond the chart and is singular on the boundary of the chart, however, Stokes's theorem does require $p\mathcal{S}_p f\Omega$ to be globally smooth. Thus, the applicability of this method is limited to cases where $p$ is supported inside the chart so that the $p\mathcal{S}_p f\Omega$ vanishes (smoothly) before reaching the boundary, or when a global chart exists. It is however well known that there is no global chart on any compact manifold, and most commonly used families are all globally supported, e.g., the vMF family on SO$(N)$. Xu and Matsuda \cite{xu2021interpretable} present an example of the vMF family on the sphere and as stated earlier, the sphere has no global chart (spherical coordinates are not global as they are singular on the boundary) and it violates their Stein's operator construction that depends on the local charts. Further, the volume density $J$ in their work is easy to compute on a sphere but not so on more complicated spaces, such as SO$(N)$ and others.

In \cite{barp2018riemann}, Barp et al. adopted a second order Stein's operator free from the choice of coordinate charts in constructing an approximation to the posterior expectation of distributions supported only on compact manifolds. In addition, to successfully characterizing \(p\) uniquely with the KSD corresponding to this operator, one must consider a very large RKHS, i.e., the Sobolev space on $M$.
Finding such a Sobolev space with an easy-to-derive closed form kernel can be challenging on a highly curved manifold. One possible way to address this challenge is to use the restriction of a Sobolev-type kernel on $\mathbb{R}^d$ to the manifold \cite{fuselier2012scattered}.

\section{KSD on Lie groups}\label{KSD-LG}
In this section, we present a novel Stein's operator on Lie groups denoted here by \(\mathcal{A}_p\) and derive its corresponding KSD, making use of the group structure, specifically, left-invariant basis, akin to the basis \(\frac{\partial}{\partial x^l}\) on \(\mathbb{R}^d\).

\subsection{Stein's operator on Lie groups and associated KSD}

Suppose \(G\) is a connected Lie group with a modular function \(\Delta\) and \(\{D^l\}_{l=1}^d\) is a left-invariant basis on \(G\). Suppose \(p\) is a locally Lipschitz probability density function on \(G\), only known up to a constant. Suppose \(\mathcal{H}_k\) is an RKHS with a kernel function \(k\) on \(G\). Then, the \emph{Stein's operator} \(\mathcal{A}_p\) is defined as,
\begin{equation}\label{StOG}
\mathcal{A}_p: f\mapsto \sum_{l=1}^d \left[ D^l f_l+ f_l D^l\log p + f_l D^l\Delta\right], \quad f\in \mathcal{H}^d_k.
\end{equation}
Here \(D^l \log p \) is set to \(0\) whenever \(p=0\). In particular, we specify each component \(\mathcal{A}^l_p\) of the operator \(\mathcal{A}_p\) as \(\mathcal{A}^l_p: h\mapsto  D^l h+ h D^l\log p + h D^l\Delta\) for \(h\in \mathcal{H}_k\).
Thus, \(\mathcal{A}_p f=\sum_{l=1}^d \mathcal{A}^l_p f_l\). In addition, the vector \((\mathcal{A}^1_p f_1,\dots,\mathcal{A}^d_p f_d )^\top\) is denoted by \(\vec{\mathcal{A}}_p f\).

On unimodular groups, e.g, compact Lie groups or abelian Lie groups, the last term \(f_l D^l\) disappears since the modular function \(\Delta\) is a constant. Specifically, note that \((\mathbb{R}^d,+)\) is an abelian Lie group, thus \(\mathcal{A}_p\) degenerates to \(T_p\) in (\ref{StOE}) on \(\mathbb{R}^d\). In contrast to the generalization by \cite{xu2021interpretable}, the computation of \(\mathcal{A}_p\), as well as its corresponding KSD defined through (\ref{KSDdef}), is independent of the choice of a coordinate chart. Although \(\mathcal{A}_p\) seemingly depends on the choice of left-invariant basis \(D^l\), we will show later that different choices of \(D^l\) will actually lead to equivalent KSDs related to each other via the inequality (\ref{KSD-Ineq}).

Following result ensures that all \(f\in\mathcal{H}_k\) are differentiable so that \(\mathcal{A}_p f\) is meaningful.

\begin{theorem}\label{KSDdiff} If \(k\in C^2(G\times G)\), i.e., twice continuously differentiable, then all \(f\in \mathcal{H}_k\) is \(C^1\). Furthermore, given a tangent vector $D\in T_{x_0} M$, $(D k)_{x_0}\in\mathcal{H}_k$ and
$D f(x_0)=\langle f,(D k)_{x_0}\rangle_{\mathcal{H}_k}$ for all $f\in\mathcal{H}_k$. Here $(D k)_{x_0}$ represents the function obtained by letting $D$ act on the first argument of $k$ and fix the first argument at $x_0$.
\end{theorem}

\begin{proof} See \cite[Lem. 4.34]{steinwart2008support}.
\end{proof}

From here on, we always assume that our kernel function is at least \(C^2\).
The \emph{kernel Stein discrepancy} (KSD) on Lie groups defined via (\ref{KSDdef}) and (\ref{StOG}), share most of properties with the vanilla KSD on \(\mathbb{R}^d\) \cite{chwialkowski2016kernel,liu2016kernelized}, defined via (\ref{KSDdef}) and (\ref{StOE})   -- specifically, replacing $S_p$ in (\ref{KSDdef}) with $T_p$ from (\ref{StOE}) in the $\mathbb{R}^d$ case and $\mathcal{A}_p$ from (\ref{StOG}) in the Lie group case respectively. For example, the shared properties include but are not limited to (i) non-negativeness, and most significantly, (ii) an integral closed form (\ref{KSDForm}). This remarkable closed form is based on the structure of the RKHS. We denote by \(\mathcal{A}^l_p k_x:=\mathcal{A}^l_p  k(x,\cdot)\) the function obtained by letting \(\mathcal{A}^l_p\) act on the first argument of \(k\) and then fix the first argument of $\mathcal{A}^l_p k(\cdot,\cdot)$ at $x$. Under the condition in theorem \ref{KSDdiff}, it is clear that \(\mathcal{A}^l_p k_x\in\mathcal{H}_k\) for all \(x\), and \(\mathcal{A}^l_p h(x)=\langle h,\mathcal{A}^l_p k_x\rangle_{\mathcal{H}_k}\) for all \(h\in\mathcal{H}_k\). Furthermore, the expectation \(\mathbb{E}_q[\mathcal{A}^l_p h]\) can be written as \(\mathbb{E}_q\langle h,\mathcal{A}_p^l k_{(\cdot)}\rangle_{\mathcal{H}_k} = \langle h, \mathbb{E}_q[\mathcal{A}^l_p k_{(\cdot)}]  \rangle_{\mathcal{H}_k} \). The expectation \(\mathbb{E}_q[\mathcal{A}^l_p k_{(\cdot)}] \) is the Bochner-integral \cite[\S A.5.4]{steinwart2008support} of the map \(x\mapsto \mathcal{A}_p^l k_x \) from \(G\) to \(\mathcal{H}_k\) w.r.t to \(q\), which is well-defined whenever the map \(x\mapsto \mathcal{A}_p^l k_x \) is Bochner \(q\)-integrable \cite[\S A.5.4]{steinwart2008support}. In such a case, the KSD can be represented by
\[
\ksd(p,q)
= \sup_{\Vert f\Vert_{\mathcal{H}^d_k}\leq 1} \mathbb{E}_q[\mathcal{A}_p f]=\sup_{\Vert f\Vert_{\mathcal{H}^d_k}\leq 1}\langle f,\mathbb{E}_q[\Vec{\mathcal{A}}_p k_{(\cdot)}]\rangle_{\mathcal{H}^d_k}=\Vert  \mathbb{E}_q[\Vec{\mathcal{A}}_p k_{(\cdot)}] \Vert_{\mathcal{H}^d_k}.
\]
Substituting the term \(\mathcal{A}^l_p k_{y}\) for \(h\) in the property \(\mathcal{A}^l_p h(x)=\langle h,\mathcal{A}^l_p k_x\rangle_{\mathcal{H}_k}\), we have,
\begin{equation*}
k_p(x,y):=\sum_{l=1}^d k^l_p(x,y):= \sum_{l=1}^d \Tilde{\mathcal{A}}^l_p\mathcal{A}^l_p  k(x,y) = \sum_{l=1}^d \langle  \mathcal{A}^l_p k_x, \mathcal{A}^l_p k_y\rangle_{\mathcal{H}_k}. 
\end{equation*}
Here, the tilde on \(\Tilde{\mathcal{A}}^l_p\) indicates that it acts on the second argument. Interchanging the inner product and the expectation again, we have the closed form of KSD, summarized in the following theorem, which is a {\it generalization to the classical version presented in} \cite{chwialkowski2016kernel,liu2016kernelized} on $\mathbb{R}^d$.

\begin{theorem}[Closed form KSD]\label{KSDFormThm} Suppose $k\in C^2(G\times G)$ and \(\sqrt{k_p(x,x)}\) is $p$ and $q$-integrable, then the map \(x\mapsto \mathcal{A}^l_p k_x\) is Bochner \(q\)-integrable and
\begin{equation}\label{KSDForm}
\ksd^2(p,q)=\int_G \int_G k_p(x,y) q(x) q(y)\mu(d x) \mu(dy). 
\end{equation}
\end{theorem}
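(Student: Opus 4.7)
The plan is to follow the outline sketched in the paragraph preceding the theorem but justify the two missing pieces: (i) Bochner integrability, and (ii) the exchange of inner product and expectation.

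First I would verify Bochner integrability of the map $F^l: x \mapsto \mathcal{A}^l_p k_x$ from $G$ to $\mathcal{H}_k$. By Theorem 3.1 (the excerpt's regularity theorem) together with the fact that $\mathcal{A}^l_p$ is a first-order differential operator with locally Lipschitz coefficients, $F^l$ is continuous from $G$ into $\mathcal{H}_k$, hence strongly measurable on the separable RKHS. Its pointwise norm is computed by the reproducing-kernel identity stated just before the theorem:
\[
\|F^l(x)\|_{\mathcal{H}_k}^2 = \langle \mathcal{A}^l_p k_x,\mathcal{A}^l_p k_x\rangle_{\mathcal{H}_k}=k^l_p(x,x)\le k_p(x,x).
\]
Thus $\|F^l(x)\|_{\mathcal{H}_k}\le \sqrt{k_p(x,x)}$, and the hypothesis that $\sqrt{k_p(x,x)}$ is $q$-integrable gives $\int_G\|F^l(x)\|_{\mathcal{H}_k}\,q(x)\mu(dx)<\infty$, which is exactly the Bochner integrability condition. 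Writing $\vec{F}:=(F^1,\dots,F^d)$ then yields a well-defined $\mathbb{E}_q[\vec{\mathcal{A}}_p k_{(\cdot)}]\in\mathcal{H}^d_k$.

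Next I would use the defining property of the Bochner integral, namely $\langle \mathbb{E}_q[F^l],h\rangle_{\mathcal{H}_k}=\mathbb{E}_q[\langle F^l,h\rangle_{\mathcal{H}_k}]$ for every $h\in\mathcal{H}_k$, combined with the reproducing identity $\mathcal{A}^l_p h(x)=\langle h,\mathcal{A}^l_p k_x\rangle_{\mathcal{H}_k}$. For any $f=(f_1,\dots,f_d)\in\mathcal{H}^d_k$ this gives
\[
\mathbb{E}_q[\mathcal{A}_p f]=\sum_{l=1}^d \mathbb{E}_q[\langle f_l,\mathcal{A}^l_p k_{(\cdot)}\rangle_{\mathcal{H}_k}]=\sum_{l=1}^d \langle f_l,\mathbb{E}_q[\mathcal{A}^l_p k_{(\cdot)}]\rangle_{\mathcal{H}_k}=\langle f,\mathbb{E}_q[\vec{\mathcal{A}}_p k_{(\cdot)}]\rangle_{\mathcal{H}^d_k}.
\]
Taking the supremum over $\|f\|_{\mathcal{H}^d_k}\le 1$ and invoking Cauchy–Schwarz (with equality attained at the normalized integral) yields $\ksd(p,q)=\|\mathbb{E}_q[\vec{\mathcal{A}}_p k_{(\cdot)}]\|_{\mathcal{H}^d_k}$.

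Finally I would compute this norm by applying the Bochner identity twice. Fixing $l$ and testing against $h=\mathcal{A}^l_p k_y$ first gives $\langle \mathbb{E}_q[F^l],\mathcal{A}^l_p k_y\rangle_{\mathcal{H}_k}=\int_G k^l_p(x,y)\,q(x)\mu(dx)$, and then testing against $\mathbb{E}_q[F^l]$ gives
\[
\bigl\|\mathbb{E}_q[F^l]\bigr\|_{\mathcal{H}_k}^2=\int_G\!\int_G k^l_p(x,y)\,q(x)q(y)\,\mu(dx)\mu(dy).
\]
Summing over $l$ and using $k_p=\sum_l k^l_p$ yields the closed-form (\ref{KSDForm}).

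The main technical obstacle is step one: strictly speaking, one must confirm strong (Bochner) measurability of $x\mapsto \mathcal{A}^l_p k_x$ with values in $\mathcal{H}_k$, since the RKHS need not be separable a priori. This can be handled by Pettis' theorem — weak measurability follows from $\langle \mathcal{A}^l_p k_x, h\rangle_{\mathcal{H}_k}=\mathcal{A}^l_p h(x)$ being a measurable function of $x$ for every $h\in\mathcal{H}_k$, and the essential range lies in the separable subspace spanned by $\{\mathcal{A}^l_p k_x\}_{x\in G}$ since $G$ (second-countable) has a countable dense subset. Once measurability is secured, the integrability bound above closes the argument and the rest is a routine manipulation of Bochner integrals.
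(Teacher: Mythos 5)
Your argument is essentially the paper's own derivation: the paper only sketches this result in the paragraph preceding the theorem (deferring details to the cited works), and your proof fills in exactly those steps — the bound $\Vert\mathcal{A}^l_p k_x\Vert_{\mathcal{H}_k}=\sqrt{k^l_p(x,x)}\le\sqrt{k_p(x,x)}$ giving Bochner $q$-integrability, the reproducing identity and the interchange of inner product with the Bochner integral giving $\ksd(p,q)=\Vert\mathbb{E}_q[\vec{\mathcal{A}}_p k_{(\cdot)}]\Vert_{\mathcal{H}^d_k}$, and a second interchange giving the double-integral formula. The only cosmetic caveat is that continuity of $x\mapsto\mathcal{A}^l_p k_x$ need not hold for merely locally Lipschitz $p$, but your Pettis-type measurability fallback (together with separability of $\mathcal{H}_k$ for a continuous kernel on the second-countable $G$) covers this, so the proof stands and matches the intended route.
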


\begin{proof} The proof is along the same lines of the theroem in the $\mathbb{R}^d$ case, see e.g. \cite{liu2016kernelized,chwialkowski2016kernel}, and hence omitted here.    
\end{proof}
With theorem \ref{KSDFormThm} in hand, we can establish the invariance of KSD with respect to different choice of basis. In particular, orthogonal transformation between basis will preserve the KSD. The following theorem formalizes this invariance.

\begin{theorem}[Invariance of basis]\label{Invariance} Suppose \(A:=(a^l_k)\) is the transformation between two bases \(D^l\) and \(E^k\) such that \(D^l=\sum_k a^l_k E^k\), then the \(\ksd\) with basis \(D^l\) and the \(\overline{\ksd}\) with basis \(E^k\) satisfies
\begin{equation}\label{KSD-Ineq}
\sqrt{\lambda_{\min}}\cdot \overline{\ksd}\leq \ksd \leq  \sqrt{\lambda_{\max}}\cdot \overline{\ksd},
\end{equation}
where, \(\lambda_{\min}\) and \(\lambda_{\max}\) are the smallest and largest eigenvalues of \(A^\top A\).
\end{theorem}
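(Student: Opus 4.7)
My plan is to reduce the statement to a finite-dimensional spectral trace bound, using the closed form $\ksd = \Vert \mathbb{E}_q[\vec{\mathcal{A}}_p k_{(\cdot)}]\Vert_{\mathcal{H}_k^d}$ supplied by Theorem~\ref{KSDFormThm}.

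First I would verify that the component operators transform linearly in the basis: since a derivation is $\mathbb{R}$-linear in its vector-field argument, substituting $D^l = \sum_k a^l_k E^k$ into $\mathcal{A}_p^l h = D^l h + h D^l \log p + h D^l \Delta$ immediately gives $\mathcal{A}_p^l = \sum_k a^l_k \bar{\mathcal{A}}_p^k$, where $\bar{\mathcal{A}}_p^k$ is the corresponding component operator in the $E$-basis. Applying this to $k_x$ at each $x$ gives the vector identity $\vec{\mathcal{A}}_p k_x = A \vec{\bar{\mathcal{A}}}_p k_x$ in $\mathcal{H}_k^d$, and because $A$ is a fixed scalar matrix, this finite linear combination commutes with Bochner integration. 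Hence $\mathbb{E}_q[\vec{\mathcal{A}}_p k_{(\cdot)}] = A v$, where $v := \mathbb{E}_q[\vec{\bar{\mathcal{A}}}_p k_{(\cdot)}] \in \mathcal{H}_k^d$ (and the Bochner integrability hypothesis for one basis immediately implies it for the other).

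With this in hand, I would expand the $\mathcal{H}_k^d$-norm to obtain $\Vert A v\Vert_{\mathcal{H}_k^d}^2 = \tr(BM)$ and $\Vert v\Vert_{\mathcal{H}_k^d}^2 = \tr(M)$, where $B := A^\top A$ and $M$ is the Gram matrix $M_{km} := \langle v_k, v_m\rangle_{\mathcal{H}_k}$. Both $B$ and $M$ are symmetric positive semi-definite ($B$ by construction, $M$ as a Gram matrix of vectors in $\mathcal{H}_k$), so the claim reduces to the standard finite-dimensional inequality $\lambda_{\min}(B)\tr(M) \le \tr(BM) \le \lambda_{\max}(B)\tr(M)$. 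This is proved by diagonalizing $B = U\Lambda U^\top$ and observing that $\tr(BM) = \sum_i \lambda_i (U^\top M U)_{ii}$, while $U^\top M U$ is PSD with non-negative diagonal entries summing to $\tr(M)$. Taking square roots then yields (\ref{KSD-Ineq}).

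The main obstacle, modest as it is, sits at the very first step: linearity of $D^l h$ in the vector field is obvious, but one must check that the $D^l \log p$ and $D^l \Delta$ terms — which multiply $h$ rather than differentiate it — transform by the same matrix $A$. They do, since each is a pointwise directional derivative of a fixed scalar function and therefore $\mathbb{R}$-linear in the vector field. Beyond that, the proof is bookkeeping plus the classical PSD trace inequality.
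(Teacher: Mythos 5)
Your proposal is correct and follows essentially the same route as the paper's proof: establish $\mathcal{A}^l_p = \sum_k a^l_k \bar{\mathcal{A}}^k_p$, commute the transformation with the Bochner integral, and bound $\Vert A v\Vert_{\mathcal{H}_k^d}$ against $\Vert v\Vert_{\mathcal{H}_k^d}$ using the eigenvalues of $A^\top A$. The only difference is that you spell out the step the paper dismisses as ``standard'' via the Gram-matrix trace inequality $\lambda_{\min}(B)\tr(M)\leq\tr(BM)\leq\lambda_{\max}(B)\tr(M)$, which is a correct and complete way to do it.
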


 \begin{proof}  Let \(\mathcal{T}_p\) be the Stein's operator with basis \(E^k\). Note that we have \(\mathcal{A}^l_p h =\sum_k a^l_k \mathcal{T}^k_p h\) for \(h\in\mathcal{H}_k\). This linear transformation also commutes with Bochner integral, that is,  \(\mathbb{E}_q[\vec{\mathcal{A}}_p f] =A\cdot \mathbb{E}_q[\vec{\mathcal{T}}_p f]\). Recall that \(\ksd=\Vert\mathbb{E}_q[\vec{\mathcal{A}}_p f] \Vert_{\mathcal{H}^d_k}\) and \(\overline{\ksd}=\Vert\mathbb{E}_q[\vec{\mathcal{T}}_p f] \Vert_{\mathcal{H}^d_k}\). The rest of the proof is standard. 
\end{proof}  

In practice, almost all of the commonly used Lie groups are matrix Lie groups, whose Lie algebra (tangent space at identity) is a subspace of $\mathbb{R}^{N\times N}$. Thus, the Lie algebra will inherit the canonical inner product of $\mathbb{R}^{N\times N}$, i.e., $\langle A,B\rangle=\tr(A^\top B)$, and the orthonormal basis w.r.t this inner product is a canonical choice of the basis.

Under mild regularity conditions, the KSD will uniquely characterize \(p\), as stated formally in the following theorem.

\begin{theorem}[Characterization]\label{KSDCharacterization} Suppose \(k\in C^2(G\times G)\) is \(C_0\)-universal. Suppose further \(\sqrt{k_p(x,x)}\) and $D^l \log(p/q)$ are integrable with respect to locally Lipschitz densities \(p\) and \(q\) for all \(l\). Then 
\(p=q\iff\ksd(p,q)=0 \).
\end{theorem}

\begin{proof} The proof is included in \S\ref{Proof-3.4}.
\end{proof}

\textbf{Choice of kernel:} In contrast to \cite{barp2018riemann}, where the \(k\) must reproduce the Sobolev space, a \(C_0\)-universal kernel is much easier to obtain on a Riemannian manifold. Specifically, \cite[Ex. 6.12]{carmeli2010vector} showed that the bivariate function \(\exp(-\frac{\tau}{2}\Vert x-y\Vert^2)\), \(\tau>0\) restricted on any closed set \(X\) in \(\mathbb{R}^m\) is \(C_0\)-universal on \(X\) and we will use it in the next section. It is possible to consider other \(C_0\)-universal kernels such as the Laplace and the Mat\'{e}rn but we will focus on the above mentioned bivaraite function and derive the MKSDE formulas for commonly encountered distributions namely, the exponential family and the Riemannian normal. The derivations will be similar for the other \(C_0\)-universal kernels.
\section{Minimum kernel Stein discrepancy estimator}\label{MKSDE-section}
Given a locally Lipschitz density \(q\) and a family of locally Lipschitz densities \(\{p_\alpha\}\), we define the \emph{minimum kernel Stein discrepancy estimator} (MKSDE) as any of the global minimizers of the KSD, that is,
\begin{equation}
\widehat{\alpha}^{\ksd}:=\argmin\ksd(\alpha):=\argmin\ksd(p_\alpha,q).
\end{equation}
However, the \(\ksd(\alpha)\) is typically not available in practice, as the integral in (\ref{KSDForm}) may be intractable or in some cases we only have samples from \(q\). In such cases, we choose to minimize a discrete \(\ksd\)
from a suite of different \(\ksd\)s based on the situation. In this section, we will introduce these minimization schemes and study their asymptotic properties.

\subsection{Minimization schemes}
The KL-divergence \(D(q||p_\alpha)=\mathbb{E}_q \log(p_\alpha)-\mathbb{E}_q \log(q)\) is commonly used for distribution approximation. If accessibility is limited to the samples from \(q\) instead of its closed form, then, we just minimize the discretized KL-divergence  \(\sum_{i=1}^n \log(p_\alpha(x_i))\), i.e., the log-likelihood function. 

Similarly, the KSD can be discretized as a \(U\)-statistic \(U_n=\frac{1}{n(n-1)}\sum_{i\neq j}k_p(x_i,x_j)\), based on which the kernel Stein goodness of fit test was developed in several prior research works \cite{chwialkowski2016kernel,liu2016kernelized,xu2021interpretable}. We could minimize this \(U\)-statistic to obtain MKSDE from samples, or minimize the \(V\)-statistic
\begin{equation}\label{MKSDE}
\ksd^2_n(\alpha):=\ksd^2_n(p_\alpha,q):= \frac{1}{n^2} \sum_{i,j} k_{p_\alpha}(x_i,x_j),
\end{equation}
which has the analogous asymptotic property but is more stable for optimization since it results in a convex loss function, for the case of $\{p_\alpha\}$ being the von-Mises Fisher family or a mixture of von-Mises Fisher family.

Another situation that commonly occurs in practice is when we do have a closed form of \(p_\alpha\) and \(q\), but the integral in (\ref{KSDForm}) is intractable.
For example, in the rotation tracking problem encountered in robotics \cite{suvorova2021tracking}, one must approximate the posterior distribution of \(Q_k\) with a von Mises-Fisher distribution so that the tracking algorithm (Kalman filter) updates consistently lie in the same space. In a Bayesian fusion problem \cite{lee2018bayesian}, one must go through a similar process to ensure that the result of the fusion stays in the family. In such cases, we can draw samples from \(q\) with any of the various sampling algorithms, e.g., Hamiltonion Monte Carlo or Metropolis-Hastings algorithms, and minimize (\ref{MKSDE}). Alternatively, if these sampling methods are hard to implement, we could use the importance sampling scheme to sample from another distribution \(w\) and minimize the \emph{weighted (discrete) KSD}:
\begin{equation}\label{wMKSDE}
\begin{aligned}
\wksd^2_n(\alpha):= \wksd^2_n(p_\alpha,q)
=
\frac{1}{n^2} \sum_{i,j} k_{p_\alpha}(x_i,x_j) q(x_i) q(x_j)w^{-1}(x_i)  w^{-1}(x_j).
\end{aligned}
\end{equation}

For example, on any compact Lie groups \(G\), e.g., \(\SO(N)\), we could directly sample from the uniform distribution on \(G\), as the volume is finite. Then the KSD in (\ref{wMKSDE}) will degenerate to
\begin{equation}
\wksd^2_n(\alpha)=\frac{1}{n^2} \sum_{i,j} k_{p_\alpha}(x_i,x_j) q(x_i) q(x_j).
\end{equation}
Note that the normalizing constants of \(q\) and \(w\) are not necessary during the minimization.

\subsection{Asymptotic properties of KSD and MKSDE}
In this section, we study the asymptotic properties of KSD and MKSDE obtained from (\ref{MKSDE}) and (\ref{wMKSDE}). Note that the weighted KSD in (\ref{wMKSDE}) will degenerate to KSD in (\ref{MKSDE}) if \(w=q\), thus it suffices to study the MKSDE obtained in (\ref{wMKSDE}). We denote the function
\[ V_\alpha(x,y):=k_{p_\alpha}(x,y) q(x)q(y)w^{-1}(x)w^{-1}(y).
\]
In this section, we assume that all the conditions in theorem \ref{KSDCharacterization} hold. We would like to emphasize that all the results in this section pertain to the weighted KSD in (\ref{wMKSDE}) on Lie groups, and are distinct from the classical results in \cite{barp2019minimum} that involve the unweighted KSD on $\mathbb{R}^d$.

\begin{theorem}[Asymptotic Distribution of KSD]\label{KSDasymptotic} Suppose \(V_\alpha(x,x)\) is \(w\)-integrable. For fixed \(\alpha\), as $n\to\infty$
\begin{enumerate}
    \item \(\wksd_n(\alpha)\to \ksd(\alpha)\) almost surely;
    \item If $\ksd(\alpha)\neq 0$, \[\sqrt{n}[\wksd^2_n(\alpha)-\ksd^2(\alpha)]\xrightarrow{d.} N\big(0,4\Tilde{\sigma}^2\big),\]
    where $\Tilde{\sigma}^2:= \text{var}_{y\sim w}[\mathbb{E}_{x\sim w} V_\alpha(x,y)]$.
    \item If $\ksd(\alpha)=0$, then 
    \[n \wksd^2_n(\alpha)\xrightarrow{d.} \sum_{l=1}^\infty \lambda_l Z^2_l,
    \]
    where \(Z_l\overset{i.i.d}{\sim} N(0,1)\), \(\lambda_l\) are the eigenvalues of the operator \(K: L^2(\omega)\to L^2(\omega), K g := \mathbb{E}_{y\sim w}[V_\alpha(\cdot,y)g(y)]\)
    s.t. \(\sum_{l=1}^\infty \lambda_l=\mathbb{E}_w[V_\alpha(x,x)]\). 
\end{enumerate}
Moreover, we have
\[
\begin{aligned}
  0 \leq \varliminf_{n\to\infty}\inf_\alpha \wksd^2_n(\alpha)\leq  \varlimsup_{n\to\infty}\inf_\alpha \wksd^2_n(\alpha)\leq \inf_\alpha \ksd^2(\alpha)  
\end{aligned}
\]
In particular, if $\inf_\alpha\ksd(\alpha)=0$, then \(\inf_\alpha \wksd_n(\alpha)\to  \inf_\alpha \ksd(\alpha)\) almost surely;
\end{theorem}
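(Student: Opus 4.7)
The plan is to view $\wksd_n^2(\alpha)$ as the squared norm of an empirical mean in the product RKHS $\mathcal{H}_k^d$. Set $H_\alpha(x) := (q(x)/w(x))\,\vec{\mathcal{A}}_{p_\alpha} k_x \in \mathcal{H}_k^d$ and $\bar H_n := n^{-1}\sum_{i=1}^n H_\alpha(x_i)$ with $x_i\stackrel{\text{iid}}{\sim}w$. The identity $V_\alpha(x,y)=\langle H_\alpha(x),H_\alpha(y)\rangle_{\mathcal{H}_k^d}$ together with Theorem \ref{KSDFormThm} yields
\[
\wksd_n^2(\alpha) = \|\bar H_n\|^2, \qquad \ksd^2(\alpha) = \|\mu_\alpha\|^2, \qquad \mu_\alpha := \mathbb{E}_{x\sim w}[H_\alpha(x)].
\]
The $w$-integrability of $V_\alpha(x,x)=\|H_\alpha(x)\|^2$ gives $\mathbb{E}_w\|H_\alpha\|<\infty$, so $H_\alpha$ is Bochner integrable into $\mathcal{H}_k^d$ and the standard Hilbert-valued limit theorems are available. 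Item (1) follows at once from the Hilbert-valued SLLN $\bar H_n\to\mu_\alpha$ a.s.\ combined with continuity of $\|\cdot\|^2$.

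For item (2), expand
\[
\wksd_n^2(\alpha) - \ksd^2(\alpha) = 2\langle\bar H_n - \mu_\alpha,\mu_\alpha\rangle + \|\bar H_n - \mu_\alpha\|^2.
\]
The second term is $O_p(n^{-1})$ by the Hilbert-space CLT. The leading term is the sample mean of the centered scalars $2[\psi_\alpha(x_i)-\mathbb{E}\psi_\alpha]$, where $\psi_\alpha(y):=\langle H_\alpha(y),\mu_\alpha\rangle = \mathbb{E}_{x\sim w}V_\alpha(x,y)$ by the reproducing identity. When $p_\alpha\neq q$, Theorem \ref{KSDCharacterization} gives $\mu_\alpha\neq 0$, and the classical scalar CLT applied to $\psi_\alpha(x_i)$ produces the advertised Gaussian limit with variance $4\,\mathrm{var}_{y\sim w}[\mathbb{E}_{x\sim w}V_\alpha(x,y)]$.

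For item (3), $p_\alpha = q$ forces $\mu_\alpha = 0$, so $\wksd_n^2(\alpha)=\|\bar H_n\|^2$ is a degenerate $V$-statistic with symmetric PSD kernel $V_\alpha$ satisfying $\mathbb{E}_{y\sim w}V_\alpha(\cdot,y)\equiv 0$. Mercer's theorem in $L^2(w)$ provides an orthonormal eigensystem $\{(\lambda_j,\phi_j)\}$ of the operator $g\mapsto \mathbb{E}_{y\sim w}[V_\alpha(\cdot,y)g(y)]$ with $\sum_j \lambda_j = \mathbb{E}_w V_\alpha(x,x) <\infty$, and the degeneracy forces $\mathbb{E}_w\phi_j=0$ whenever $\lambda_j>0$. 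Expanding $V_\alpha(x,y)=\sum_j\lambda_j\phi_j(x)\phi_j(y)$ and interchanging sums yields
\[
n\wksd_n^2(\alpha) = \sum_j \lambda_j \Bigl(n^{-1/2}\sum_{i=1}^n \phi_j(x_i)\Bigr)^2,
\]
and the multivariate CLT on finite truncations together with an $L^2$-tightness bound supplied by $\sum_j\lambda_j<\infty$ upgrades convergence to $\sum_j\lambda_j Z_j^2$ with $Z_j\stackrel{\text{iid}}{\sim}N(0,1)$.

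For the moreover, pick $\alpha^*$ (approximately) attaining $\inf_\alpha\ksd^2$; the lower bound $0\leq\inf_\alpha\wksd_n^2$ is immediate. Evaluating at $\alpha^*$, when $q$ belongs to the family one may take $p_{\alpha^*}=q$ so that $\mu_{\alpha^*}=0$; then $\wksd_n^2(\alpha^*)=\|\bar H_n\|^2$ has expectation $n^{-1}\mathbb{E}_w V_{\alpha^*}(x,x)$, which via Markov gives $\inf_\alpha\wksd_n^2=O_p(n^{-1})$, and a Borel--Cantelli argument along a suitably thinned subsequence upgrades this to almost-sure convergence $\inf_\alpha\wksd_n^2\to 0=\inf_\alpha\ksd^2$. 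The main technical obstacle is item (3): making the spectral argument fully rigorous requires careful truncation of the Mercer series together with a uniform tail-variance estimate to pass from finite-rank CLTs to the full quadratic-form limit.
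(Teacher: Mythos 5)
Your items (1)--(3) are proved correctly, but by a genuinely different route from the paper. You re-derive the $V$-statistic asymptotics from scratch via the Hilbert-space embedding: writing $\wksd_n^2(\alpha)=\Vert \bar H_n\Vert^2_{\mathcal{H}_k^d}$ with $H_\alpha(x)=(q/w)(x)\,\vec{\mathcal{A}}_{p_\alpha}k_x$, then invoking the Hilbert-valued SLLN/CLT for (1)--(2) and a Mercer-type spectral expansion with truncation for the degenerate case (3). The paper instead makes a single observation — $V_\alpha$ is positive definite, so $V_\alpha(x,y)^2\leq V_\alpha(x,x)V_\alpha(y,y)$ and hence the assumed integrability of the diagonal gives square-integrability of the kernel — and then cites the classical asymptotic theory of $V$/$U$-statistics (Serfling \S 6.4.1, Thms.\ 5.4.A--B) wholesale. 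Your route is more self-contained and makes the degeneracy structure transparent ($\mu_\alpha=0$ iff $p_\alpha=q$, via the Stein identity and Theorem \ref{KSDCharacterization}); the paper's route is shorter and outsources exactly the technical step you flag as delicate (the spectral/truncation argument in (3)) to a standard reference, so in effect you are reproving Serfling's theorem rather than citing it — acceptable, but the truncation and tail-variance control in (3) would need to be written out to be complete.

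The genuine gap is in the ``Moreover'' clause. The claim $0\leq\inf_\alpha\wksd_n^2\leq\inf_\alpha\ksd^2+O_p(n^{-1})$ is stated for arbitrary $q$, but your argument only treats the case where $q$ belongs to the family: you immediately specialize to $p_{\alpha^*}=q$, use $\mu_{\alpha^*}=0$ and Markov's inequality, and never bound $\wksd_n^2(\alpha^*)-\ksd^2(\alpha^*)$ when the kernel is non-degenerate. The paper handles the general case by taking a near-minimizing sequence $\alpha_n$ with $\ksd^2(\alpha_n)\leq\inf_\alpha\ksd^2+n^{-1}$, decomposing the $V$-statistic as $\frac{n-1}{n}U_n+n^{-2}\sum_i V_{\alpha}(x_i,x_i)$, and invoking $U$-statistic convergence-rate bounds, so that $\inf_\alpha\wksd_n^2\leq\wksd_n^2(\alpha_n)\leq\inf_\alpha\ksd^2+O_p(n^{-1})$; your write-up contains no analogue of this step, so the general bound is simply unproven. (Note also that in your own framework, at a fixed non-degenerate $\alpha^*$ the fluctuation $2\langle\bar H_n-\mu,\mu\rangle$ is only $O_p(n^{-1/2})$, so the general claim cannot be obtained just by evaluating at one $\alpha^*$ — some additional argument is genuinely required.) Finally, your Borel--Cantelli/thinned-subsequence device for the ``in particular'' statement is unnecessary and, as sketched, would not by itself give full-sequence almost-sure convergence: since $p_{\alpha_0}=q$ gives $\wksd_n^2(\alpha_0)\to 0$ almost surely directly from your item (1), the sandwich $0\leq\inf_\alpha\wksd_n^2\leq\wksd_n^2(\alpha_0)$ already finishes that part.
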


\begin{proof}

Note that $V_\alpha(\cdot,\cdot)$ is also positive definite, thus we have  $V_\alpha(x,y)^2\leq V_\alpha(x,x)V_\alpha(y,y)$, thus $\mathbb{E}_q[V_\alpha(x,x)]<+\infty$ implies that $\mathbb{E}_{x,y\sim q}[V_\alpha(x,y)^2]<+\infty$. Therefore, (1), (2) and (3) are straightforward from the classical asymptotic results of $V$-statistics \cite[\S 6.4.1]{serfling2009approximation}. To show the last statement, note that for fixed $\alpha$, $\wksd^2_n(\alpha)=\frac{n-1}{n} U_n+n^{-2}\sum_{i=1}^n V_\alpha(x_i,x_i)$, then $\wksd^2_n(\alpha)\to \ksd^2(\alpha)$ almost surely by SLLN and the strong consistency of $U$-statistic \cite[Thm. 5.4.A]{serfling2009approximation}, with convergence rate of $O_p(n^{-1})$ by the convergence rate of $U$-statistic \cite[Thm 5.4.B]{serfling2009approximation}. Take a sequence of $\alpha_m$ such that $\ksd(\alpha_m)\leq \inf_{\alpha} \ksd(\alpha)+m^{-1} $, then 
\[
\begin{aligned}
\inf_\alpha  \wksd_n(\alpha)\leq \wksd_n(\alpha_m) \to  \ksd(\alpha_m)\leq \inf_\alpha \ksd(\alpha)+m^{-1}.
\end{aligned}
\]
We conclude the result due to the arbitrariness of $m$.
\end{proof}

Up until now, we did not assume any additional structure of the index \(\alpha\). To obtain the asymptotic behavior of the parameter, we re-tag the index of the density family by \(\theta\) from some connected metric parameter space \((\Theta,d)\). Let \(\Theta_0\subset \Theta\) be the set of best approximators \(\theta_0\), i.e., \(\wksd(\theta_0)=\inf_\theta \wksd(\theta)\). Let \(\widehat{\Theta}_n\) be the set of MKSDE, i.e.,  minimizers \(\hat{\theta}_n\) of \(\wksd_n(\theta)\) in (\ref{wMKSDE}), which is a random set. With the additional metric structure on the parameter space, we can establish a stronger asymptotic result on KSD.
\begin{theorem} \label{KSDasymptotic-stronger} Suppose \(V_{(\cdot)}(\cdot,\cdot)\) is jointly continuous and \(\sup_{\theta\in K} V_\theta(x,x)\) is \(w\)-integrable for any compact \(K\subset\Theta\), then \(\wksd^2_n(\theta)\to \ksd^2(\theta) \) compactly almost surely, i.e., for any compact \(K\), as $n\to\infty$,
\[ \wksd^2_n(\theta) \to \ksd^2(\theta) \text{ uniformly on }K, \quad\text{almost surely}. \]
As a corollary, if \(\Theta\) is locally compact, \(\wksd_n\) and \(\ksd\) are all continuous on \(\Theta\).
\end{theorem}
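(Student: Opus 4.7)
The plan is to establish a uniform-on-compacts strong law for the $V$-statistic
\[\wksd_n^2(\theta)=\tfrac{1}{n^2}\sum_{i,j}V_\theta(X_i,X_j),\qquad X_i\overset{\text{i.i.d.}}{\sim}w,\]
whose population mean is $\ksd^2(\theta)=\iint V_\theta(x,y)w(x)w(y)\mu(dx)\mu(dy)$, via a classical bracketing argument. The first step is to extract a $w\otimes w$-integrable envelope from the inner-product structure of the Stein kernel: since $k_{p_\theta}(x,y)=\sum_l\langle\mathcal{A}^l_{p_\theta}k_x,\mathcal{A}^l_{p_\theta}k_y\rangle_{\mathcal{H}_k}$, Cauchy--Schwarz in $\mathcal{H}_k^d$ gives $|k_{p_\theta}(x,y)|\le\sqrt{k_{p_\theta}(x,x)\,k_{p_\theta}(y,y)}$, hence $|V_\theta(x,y)|\le\tfrac12[V_\theta(x,x)+V_\theta(y,y)]$, and taking $\sup_{\theta\in K}$ shows that $M(x,y):=\sup_{\theta\in K}|V_\theta(x,y)|$ is $w\otimes w$-integrable by the stated hypothesis.

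Fix a compact $K\subset\Theta$ and, for $\theta_0\in K$ and $\delta>0$, introduce the bracketing functions
\[\overline V^{\,\delta}_{\theta_0}(x,y):=\!\!\sup_{\theta\in\overline B(\theta_0,\delta)\cap K}\!\!V_\theta(x,y),\qquad\underline V^{\,\delta}_{\theta_0}(x,y):=\!\!\inf_{\theta\in\overline B(\theta_0,\delta)\cap K}\!\!V_\theta(x,y).\]
Joint continuity of $V_{(\cdot)}(\cdot,\cdot)$ together with second countability of the compact metric subspace $K$ (restrict sup/inf to a countable dense subset) ensures measurability, and continuity of $\theta\mapsto V_\theta(x,y)$ forces $\overline V^{\,\delta}_{\theta_0}\downarrow V_{\theta_0}$ and $\underline V^{\,\delta}_{\theta_0}\uparrow V_{\theta_0}$ pointwise as $\delta\downarrow0$. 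Since both are dominated by $M$, dominated convergence yields $\mathbb E_w[\overline V^{\,\delta}_{\theta_0}-\underline V^{\,\delta}_{\theta_0}]\to0$, where $\mathbb E_w[\cdot]$ denotes integration against $w\otimes w$.

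By compactness, cover $K$ by finitely many balls $\overline B(\theta_i,\delta)$, $i=1,\ldots,N(\delta)$. For each $i$ the scalar SLLN for $V$-statistics, applied to the integrable kernels $\overline V^{\,\delta}_{\theta_i}$ and $\underline V^{\,\delta}_{\theta_i}$, gives the a.s.\ limits $\mathbb E_w[\overline V^{\,\delta}_{\theta_i}]$ and $\mathbb E_w[\underline V^{\,\delta}_{\theta_i}]$ (the $n$ diagonal terms contribute $O(1/n)$ and vanish since $M(x,x)\le\sup_\theta V_\theta(x,x)$ is $w$-integrable). Sandwiching both $\wksd_n^2(\theta)$ and $\ksd^2(\theta)$ between these two limits for every $\theta\in\overline B(\theta_i,\delta)\cap K$ yields
\[\limsup_n\sup_{\theta\in K}|\wksd_n^2(\theta)-\ksd^2(\theta)|\le\max_{i\le N(\delta)}\mathbb E_w[\overline V^{\,\delta}_{\theta_i}-\underline V^{\,\delta}_{\theta_i}]\quad\text{a.s.}\]
Letting $\delta\downarrow0$ along $\{1/m\}$ and unioning the countably many null sets delivers the claimed compact uniform convergence. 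The corollary is immediate: $\wksd_n^2$ is a finite sum of continuous maps $\theta\mapsto V_\theta(X_i,X_j)$, hence continuous; and $\ksd^2$ is a uniform limit of continuous functions on every compact neighborhood of the locally compact $\Theta$, so continuous; taking square roots preserves continuity.

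The main obstacle is that joint continuity of $V_\theta(x,y)$ in $\theta$ does \emph{not} yield local uniform continuity in $(x,y)$, since $G$ can be noncompact and the tails of $V_\theta$ carry the bulk of the integral. The envelope $M$ derived in the first step, together with the two invocations of dominated convergence (for the bracket gap and implicitly inside the $V$-statistic SLLN), are precisely the devices needed to circumvent this gap.
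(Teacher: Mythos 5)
Your proposal is correct in substance but takes a genuinely different route from the paper's. The paper splits \(\wksd^2_n(\theta)\) into its off-diagonal \(U\)-statistic part and the diagonal part, and then invokes two external uniform-convergence results (Rubin's theorem on uniform strong laws for random functions for the diagonal term, and Yeo--Johnson's uniform SLLN for \(U\)-statistics for the off-diagonal term), verifying their equicontinuity and domination hypotheses via the positive-definiteness bound \(\sup_{\theta\in K}|V_\theta(x,y)|\le\sup_{\theta\in K}V_\theta(x,x)^{1/2}\,\sup_{\theta\in K}V_\theta(y,y)^{1/2}\), Jensen's inequality and dominated convergence. You instead give a self-contained Wald-type bracketing argument: the same positive-definiteness envelope (via AM--GM rather than Jensen), sup/inf brackets over small balls, dominated convergence for the bracket gap, the scalar SLLN applied to finitely many bracket kernels, and a sandwich, with the diagonal term absorbed by the envelope at rate \(O(1/n)\). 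The paper's route buys brevity by outsourcing the uniformity work; yours buys elementariness, needing only the classical SLLN for \(U\)/\(V\)-statistics plus compactness, and it reaches the corollary the same way. One step needs tightening: after covering \(K\) by balls of a \emph{common} radius \(\delta\) you bound the limsup by \(\max_{i\le N(\delta)}\mathbb{E}_w[\overline V^{\,\delta}_{\theta_i}-\underline V^{\,\delta}_{\theta_i}]\) and then ``let \(\delta\downarrow0\)'', but the centers \(\theta_i\) change with \(\delta\), while your dominated-convergence statement only gives the bracket gap tending to \(0\) pointwise in the center. Either follow the standard Wald ordering --- fix \(\epsilon>0\), choose a center-dependent radius \(\delta(\theta)\) making the gap \(<\epsilon\), and only then extract a finite subcover with these varying radii --- or note that the gap decays uniformly over centers in \(K\): if \(\delta_m\to0\) and \(\theta_m\to\theta^*\) in \(K\), then \(\overline B(\theta_m,\delta_m)\subset\overline B(\theta^*,\eta_m)\) with \(\eta_m:=\delta_m+d(\theta_m,\theta^*)\to0\), so the gap at \((\theta_m,\delta_m)\) is dominated by the gap at \((\theta^*,\eta_m)\), which vanishes by dominated convergence. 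With that patch your argument is complete and correct.
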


\begin{proof} The proof is included in \S\ref{Proof-4.2}.    
\end{proof}

The condition in theorem \ref{KSDasymptotic-stronger} namely, the joint continuity of $V_{(\cdot)}(\cdot,\cdot)$ , is much easier to test in practice than prior results in \cite{rubin1956uniform,yeo2001uniform} on the uniform strong law for $U$-statistics. 
With theorem \ref{KSDasymptotic-stronger} in hand, we can establish the strong consistency of MKSDE.

\begin{theorem}[Strong consistency]\label{MKSDEconsistency} Suppose the conditions in theorem \ref{KSDasymptotic-stronger} hold and \(\Theta=\Theta_1\times\Theta_2\) such that \(\Theta_1\) is compact, \(\Theta_2\) is convex, and for each fixed \(\theta_1\in\Theta_1\), \(\wksd_n(\theta_1,\cdot)\) is convex on \(\Theta_2\) and \(\ksd(\theta_1,\cdot)\) attains minimum value on a non-empty and compact set \(\Tilde{\Theta}_0(\theta_1)\subset \text{int}(\Theta_2)\). Then \(\Theta_0\), \(\widehat{\Theta}_n\) are non-empty for large \(n\) and \(\sup_{\theta\in \widehat{\Theta}_n} d(\theta,\Theta_0)\to 0\) almost surely as $n\to \infty$.
\end{theorem}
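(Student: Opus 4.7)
The plan is to combine the compact uniform convergence of $\wksd_n^2\to\ksd^2$ granted by Theorem~\ref{KSDasymptotic-stronger} with a convexity-based localization argument that tames the non-compactness of $\Theta_2$. The key structural hypothesis, convexity of $\wksd_n(\theta_1,\cdot)$ together with $\tilde\Theta_0(\theta_1)$ being compact and interior to $\Theta_2$, is exactly what allows one to confine all relevant minimizers to a single compact region of $\Theta$, after which a standard argmin-consistency argument finishes the job.

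\textbf{Setup and localization.} Since $\wksd_n(\theta_1,\cdot)$ is non-negative and convex, so is $\wksd_n^2(\theta_1,\cdot)$, and passing to the a.s.\ pointwise limit from Theorem~\ref{KSDasymptotic}.1, $\ksd^2(\theta_1,\cdot)$ is convex on $\Theta_2$ as well. Fix $\theta_1^\star\in\Theta_1$ and pick a compact convex neighborhood $\bar N(\theta_1^\star)$ of $\tilde\Theta_0(\theta_1^\star)$ sitting inside $\mathrm{int}(\Theta_2)$. Writing $m(\theta_1):=\min_{\theta_2}\ksd^2(\theta_1,\theta_2)$, joint continuity of $\ksd^2$ and compactness of $\partial\bar N$ give $\eta>0$ with $\ksd^2(\theta_1^\star,\cdot)\geq m(\theta_1^\star)+3\eta$ on $\partial\bar N$. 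The crucial step is a convex-combination argument: any $\theta_2'$ outside $\bar N$ can be written as $\theta_2''=(1-s)\theta_2^0+s\theta_2'$ with $\theta_2^0\in\tilde\Theta_0(\theta_1^\star)$ and $\theta_2''\in\partial\bar N$, whence convexity forces $\ksd^2(\theta_1^\star,\theta_2')\geq m(\theta_1^\star)+3\eta$ as well. A finite subcover of the compact $\Theta_1$ then produces compacts $K^\star\Subset\mathrm{int}(K^{\star\star})\subset\mathrm{int}(\Theta_2)$ and a uniform $\eta>0$ such that $\tilde\Theta_0(\theta_1)\subset K^\star$ for every $\theta_1\in\Theta_1$ and $\ksd^2\geq m^\star+2\eta$ on $\Theta_1\times(\Theta_2\setminus K^\star)$, where $m^\star:=\inf_\Theta\ksd^2$.

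\textbf{Existence of $\Theta_0$ and $\widehat\Theta_n$.} On each member of the finite cover, $m(\theta_1)$ equals $\min_{\theta_2\in\bar N}\ksd^2(\theta_1,\theta_2)$ and is continuous by Berge's maximum theorem; so $m$ is continuous on the compact $\Theta_1$, attains $m^\star$, and $\Theta_0$ is non-empty with $\Theta_0\subset\Theta_1\times K^\star$. For $\widehat\Theta_n$, Theorem~\ref{KSDasymptotic-stronger} applied on the compact $\Theta_1\times K^{\star\star}$ yields $|\wksd_n^2-\ksd^2|<\eta/2$ there a.s.\ for large $n$; thus $\wksd_n^2\leq m^\star+\eta/2$ somewhere in $\Theta_1\times K^\star$, while $\wksd_n^2\geq m^\star+3\eta/2$ on the $\Theta_2$-boundary of $K^{\star\star}$. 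Re-running the convex-combination argument now for $\wksd_n^2(\theta_1,\cdot)$ propagates this bound to the complement of $K^{\star\star}$ in $\Theta_2$, so the infimum of $\wksd_n^2$ over $\Theta$ is attained inside the compact $\Theta_1\times K^{\star\star}$ and $\widehat\Theta_n$ is non-empty.

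\textbf{Consistency and main obstacle.} For any $\delta>0$, the set $A_\delta:=(\Theta_1\times K^{\star\star})\setminus\Theta_\delta$ is compact and disjoint from $\Theta_0$, so continuity of $\ksd^2$ yields $\ksd^2\geq m^\star+\rho$ on $A_\delta$ for some $\rho>0$; uniform convergence then gives $\wksd_n^2\geq m^\star+\rho/2$ on $A_\delta$ eventually a.s. Combined with the convexity bound $\wksd_n^2\geq m^\star+3\eta/2$ outside $K^{\star\star}$, this produces $\wksd_n^2\geq m^\star+\kappa$ on $\Theta\setminus\Theta_\delta$ for $\kappa:=\min(\rho/2,3\eta/2)>0$. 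Since $\wksd_n^2(\theta_0)\to m^\star$ a.s.\ for any fixed $\theta_0\in\Theta_0$ by Theorem~\ref{KSDasymptotic}.1, eventually $\inf\wksd_n^2<m^\star+\kappa$, forcing $\widehat\Theta_n\subset\Theta_\delta$ eventually a.s.\ and giving the desired sup-convergence. The main technical obstacle is precisely the non-compactness of $\Theta_2$, which a priori allows MKSDEs to escape to the boundary or to infinity; the convexity-plus-interior-minimum assumption is what enables the convex-combination propagation step that manufactures a uniform gap outside a compact region, and this is the only non-standard ingredient in what is otherwise a textbook argmin-consistency argument.
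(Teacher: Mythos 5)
Your overall route coincides with the paper's: the convex-combination step is the paper's auxiliary lemma on convex functions over a bounded open set, the finite cover of the compact \(\Theta_1\) replaces the paper's tube-lemma localization of the slice minimizers \(\Tilde{\Theta}_0(\theta_1)\), and the finish is the standard uniform-convergence argmin argument via Theorem \ref{KSDasymptotic-stronger} (your use of Berge's theorem for nonemptiness of \(\Theta_0\) is a mild, legitimate shortcut). However, there is a genuine gap in the step ``re-running the convex-combination argument for \(\wksd^2_n(\theta_1,\cdot)\)''. Propagating a lower bound from \(\partial K^{\star\star}\) to \(\Theta_2\setminus K^{\star\star}\) on a given slice \(\{\theta_1\}\times\Theta_2\) requires, for that same \(\theta_1\), a point in the interior of \(K^{\star\star}\) at which \(\wksd^2_n(\theta_1,\cdot)\) is strictly below \(\inf_{\partial K^{\star\star}}\wksd^2_n(\theta_1,\cdot)\). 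Your justification produces such an anchor only on near-optimal slices: you know \(\wksd^2_n\leq m^\star+\eta/2\) at some point of \(\Theta_1\times K^\star\) lying over a minimizing \(\theta_1\), while your boundary bound \(m^\star+3\eta/2\) is measured relative to the global minimum. On a slice with \(m(\theta_1)\geq m^\star+\eta\), nothing you recorded places any interior value of \(\wksd^2_n(\theta_1,\cdot)\) below \(m^\star+3\eta/2\), so convexity does not forbid \(\wksd^2_n(\theta_1,\cdot)\) from being large on all of \(K^{\star\star}\) yet dipping below \(m^\star\) far outside \(K^{\star\star}\), where you have no uniform control. Thus, as written, you have not excluded global minimizers of \(\wksd^2_n\) escaping along suboptimal slices, and both the nonemptiness of \(\widehat{\Theta}_n\) and your final bound ``\(\wksd^2_n\geq m^\star+3\eta/2\) outside \(K^{\star\star}\)'' rest on this unlicensed propagation.

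The fix is to keep slice-relative constants in the finite-cover construction, which is exactly what the paper's localization lemma supplies. For each covering point \(\theta_1^{\star,i}\), the continuity/tube-lemma step gives a neighborhood \(V_i\) on which both \(\ksd^2\geq m(\theta_1^{\star,i})+2\eta_i\) on \(V_i\times\partial \bar N_i\) and \(\ksd^2(\theta_1,\tilde\theta_2^i)<m(\theta_1^{\star,i})+\eta_i\) for a fixed \(\tilde\theta_2^i\in\Tilde{\Theta}_0(\theta_1^{\star,i})\); combining these and propagating in \(\theta_2\) yields, for every \(\theta_1\in V_i\) and every \(\theta_2\notin \bar N_i\), the bound \(\ksd^2(\theta_1,\theta_2)\geq m(\theta_1)+\eta_i\), i.e.\ a gap relative to the slice minimum \(m(\theta_1)\), uniformly in \(\theta_1\) (and it also gives \(\Tilde{\Theta}_0(\theta_1)\subset\bar N_i\)). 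With this, anchor the \(\wksd^2_n\)-propagation at \(\theta_2^0\in\Tilde{\Theta}_0(\theta_1)\subset K^\star\): uniform convergence on \(\Theta_1\times K^{\star\star}\) with tolerance \(\eta/4\) gives \(\wksd^2_n(\theta_1,\theta_2^0)\leq m(\theta_1)+\eta/4<m(\theta_1)+3\eta/4\leq\inf_{\partial K^{\star\star}}\wksd^2_n(\theta_1,\cdot)\) for every \(\theta_1\), so the convex-combination step applies on every slice and yields \(\wksd^2_n\geq m^\star+3\eta/4\) off \(\Theta_1\times K^{\star\star}\); your existence and consistency arguments then go through. Relatedly, the sentence ``a finite subcover of the compact \(\Theta_1\) then produces \(\dots\)'' silently uses this same extension from the single point \(\theta_1^\star\) to a \(\theta_1\)-neighborhood (joint continuity of \(\ksd^2\) on the compact \(\{\theta_1^\star\}\times\partial\bar N\) plus the tube lemma); it should be spelled out, since it is the step that makes the subcover and the uniform \(\eta\) available at all.
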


\begin{proof} The proof is included in \S\ref{Proof-4.3}.    
\end{proof}

It is worth noting that if the family $p_\theta$ is identifiable and $q$ is a member of the family \(p_\theta\), then the set \(\Theta_0\) of global minimizers is a singleton \(\{\theta_0\}\). In such a situation, the MKSDE always converges to the unique \(\theta_0\). 

In \cite[Thm. 3.3]{barp2019minimum} Barp  et al., showed the consistency of MKSDE, assuming that the parameter space \(\Theta\) is either compact or satisfies the conditions of convexity. However, this is not applicable to the Riemannian normal distribution, as it has a compact \(\mu\in\SO(N)\) and a convex \(\tau>0\) simultaneously. In contrast, theorem \ref{KSDasymptotic-stronger}, as a stronger version, only requires checking for the existence of global minimizers for one component, instead of for both components, and thus applicable to this situation.

In addition, note that the MKSDE of vMF is a quadratic form of $F$. Therefore, we have:

\begin{corollary} The MKSDE for von Mises-Fisher distribution computed via (\ref{KSD-SO(N)-vMF}) and (\ref{wMKSDE}), and the MKSDE for Riemannian normal distribution computed via (\ref{KSD-SO(N)-RN}) and (\ref{wMKSDE}) are strongly consistent.
\end{corollary}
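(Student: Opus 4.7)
The plan is to reduce everything to a direct verification of the hypotheses of Theorem \ref{MKSDEconsistency} (and therefore of Theorem \ref{KSDasymptotic-stronger}) for each of the two families. The global continuity and integrability hypotheses of Theorem \ref{KSDasymptotic-stronger} are easy to check in both cases: \(\SO(N)\) is compact, so the auxiliary weight \(w\) can be taken as the uniform (Haar) density, and for any compact subset \(K\) of the parameter space the kernel \(k\), the gradient \(\nabla_X\log p_\theta\), and the function \(V_\theta(x,y)\) in \eqref{KSD-SO(N)-vMF}/\eqref{KSD-SO(N)-RN} are jointly continuous and uniformly bounded on \(K\times \SO(N)\times \SO(N)\). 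Hence \(\sup_{\theta\in K}V_\theta(x,x)\) is bounded and \(w\)-integrable. So the real work lies in producing the decomposition \(\Theta=\Theta_1\times\Theta_2\) and verifying convexity of \(\wksd_n(\theta_1,\cdot)\) on \(\Theta_2\) together with the interior-compactness of the minimum set of \(\ksd(\theta_1,\cdot)\).

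For the vMF family I take \(\Theta_1\) trivial and \(\Theta_2 = \mathbb{R}^{N\times N}\ni F\). The key observation is the RKHS identity
\[ \wksd^2_n(F)=\Bigl\Vert \tfrac{1}{n}\sum_{i=1}^n \tfrac{q(X_i)}{w(X_i)}\,\Vec{\mathcal{A}}_{p_F} k_{X_i}\Bigr\Vert^2_{\mathcal{H}^d_k}, \]
and the fact that \(\nabla_X\log p_F = F\) for vMF so \(\Vec{\mathcal{A}}_{p_F}k_x\) is affine in \(F\). Consequently \(\wksd^2_n(\cdot)\) is a PSD quadratic form in \(\vectz(F)\); reading off from Example \ref{vMF-SO(N)}, its Hessian is precisely \(A\), which is therefore PSD and generically positive definite (it is the Gram matrix of the affine part of \(\Vec{\mathcal{A}}_{p_F}k_{X_i}\)). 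Convexity is immediate, and coercivity in \(F\) (when \(A\) is PD) puts the unique minimizer in a bounded set, which is compact and contained in the interior of \(\mathbb{R}^{N\times N}\). Theorem \ref{MKSDEconsistency} then delivers strong consistency.

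For the Riemannian normal family the parameter is \((\mu,\varsigma)\in\SO(N)\times(0,\infty)\), so I set \(\Theta_1 = \SO(N)\) (compact) and \(\Theta_2 = (0,\infty)\) (convex). Fixing \(\mu\) and inspecting \eqref{KSD-SO(N)-RN}, the quantity \(\varsigma\Log(X^\top\mu)+\tau\mathscr{A}(X^\top Y)\) is affine in \(\varsigma\), so the same RKHS-norm argument as above shows \(\wksd^2_n(\mu,\cdot)\) is a PSD (indeed PD for generic samples, because \(\Log(X_i^\top\mu)\) is nonzero on a set of positive measure) quadratic function of \(\varsigma\). The population quantity \(\ksd^2(\mu,\varsigma)\) inherits the same structure in the limit, so its minimizer \(\Tilde{\Theta}_0(\mu)\) is a singleton \(\{\varsigma^*(\mu)\}\); to place it in the \emph{interior} of \((0,\infty)\) I must show the quadratic coefficient in \(\varsigma\) is strictly positive while the linear coefficient is strictly negative. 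Positivity of the quadratic coefficient follows from non-degeneracy of \(\Log(X^\top\mu)\) under \(q\); strict negativity of the linear coefficient — which is the cross term \(2\tau\,\mathbb{E}_{q\otimes q}\tr[\Log(X^\top\mu)^\top\mathscr{A}(X^\top Y)]e^{\tau\tr(X^\top Y)}\) — can be checked by a symmetry / interchange-of-\((X,Y)\) argument or explicitly for the generic case. Finally, continuity of \(\mu\mapsto \varsigma^*(\mu)\) together with compactness of \(\SO(N)\) makes the union \(\{(\mu,\varsigma^*(\mu)):\mu\in \SO(N)\}\) compact and interior, satisfying all hypotheses of Theorem \ref{MKSDEconsistency}.

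The main obstacle is the last step, namely ruling out degenerate minima at the boundary of \(\Theta_2\) (the ray \(\varsigma\downarrow 0\) or \(\varsigma\uparrow\infty\) for the Riemannian normal, and \(\Vert F\Vert\to\infty\) for vMF). This is essentially a coercivity / definiteness check on the quadratic forms extracted from \eqref{KSD-SO(N)-vMF} and \eqref{KSD-SO(N)-RN}, and it is also what makes the parameter-space split \(\Theta=\Theta_1\times\Theta_2\) in Theorem \ref{MKSDEconsistency} — allowing compact directions for \(\mu\in\SO(N)\) and convex directions for \(F\in\mathbb{R}^{N\times N}\) or \(\varsigma>0\) simultaneously — essential, since the compactness-only or convexity-only framework of \cite{barp2019minimum} does not apply to the Riemannian normal case.
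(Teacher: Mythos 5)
Your proposal takes essentially the same route as the paper: the paper's entire justification for this corollary is the remark that the vMF objective is a quadratic (hence convex) form in \(F\) and that the Riemannian normal has parameter space \(\SO(N)\times(0,\infty)\), i.e.\ compact \(\times\) convex, so Theorem \ref{MKSDEconsistency} applies, with continuity/integrability automatic from compactness of \(\SO(N)\) — exactly your reduction. Your extra discussion of the interior-compact-minimum hypothesis (positive definiteness of the population quadratic form for vMF, sign of the linear coefficient in \(\varsigma\) for the Riemannian normal) is a condition the paper simply leaves implicit rather than verifies, so flagging it without completing it does not put you below the paper's own level of rigor, though it is the honest remaining loose end in both treatments.
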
 
\begin{proof} The proof is straightforward.    
\end{proof}

To establish the asymptotic normality of MKSDE, we assume that \(\Theta\) is a connected Riemannian manifold with a Levi-Civita connection \(\nabla\) and the Riemannian logarithm map, \(\Log\). We assume that \(\Theta_0\) and \(\widehat{\Theta}_n\) are non-empty for \(n\) large enough, and $\hat{\theta}_n$ is a sequence of MKSDE that converges to one of the global minimizer $\theta_0$ of $\ksd(\theta)$. Additionally, we assume the following conditions:
\begin{enumerate}
    \item[\textbf{(A1)}] $V_\theta(x,y)$ is jointly continuous, and twice continuously differentiable in $\theta$.
    \item[\textbf{(A2)}] there exists a neighborhood $U$ of $\theta_0$ such that $\sup_{\theta\in U}\Vert \nabla V_{\theta_0}(x,y)\Vert$ is $w\times w$-integrable.
    \item[\textbf{(A3)}]$\Vert \nabla V_{\theta_0}(x,y)\Vert^2$ and $\Vert\mathcal{I}_{\theta_0}(x,y)\Vert$ are $w\times w$-integrable, $\Vert\mathcal{I}_{\theta_0}(x,x)\Vert$ is $w$-integrable.
    \item[\textbf{(A4)}] $\mathcal{I}_{\theta_0}(x,y)$ is equi-continuous at $\theta_0$. 
    \item[\textbf{(A5)}] $\Gamma:=\frac{1}{2}\mathbb{E}_{x,y\sim w} [\mathcal{I}_{\theta_0}(x,y)]$ is invertible.
\end{enumerate}
Here $\nabla V_\theta(x,y)$ represents the gradient of $V_\theta(x,y)$ w.r.t $\theta$, and $\mathcal{I}_\theta(x,y)$ represents the Hessian of $V_\theta(x,y)$ w.r.t $\theta$. In addition, let $\Sigma$ be the covariance matrix of the random vector $\mathbb{E}_{Y\sim w}[V_{\theta_0}(x,Y)]$.

\begin{theorem}[CLT for MKSDE]\label{MKSDECLT} Under assumption $\textbf{A1}$, $\textbf{A2}$, $\textbf{A3}$, $\textbf{A4}$ and $\textbf{A5}$, we have
 \(\sqrt{n}\Log_{\theta_0} (\hat{\theta}_n) \xrightarrow{d.} \mathcal{N}(0,\Gamma^{-1}\Sigma\Gamma^{-1})\). 
\end{theorem}
\begin{proof}  In this work, the parameter space is not assumed to be a vector space and thus is not necessarily flat. However, as the MKSDE $\hat{\theta}_n$ converges to the ground truth $\theta_0$, the sequence will finally enter a neighborhood of $\theta_0$. Since a manifold is locally Euclidean, the parameter space $\Theta$ can be considered as a flat space without loss of generality. It is convenient to take the normal coordinates given by the exponential map Exp$_{\theta_0}$ at $\theta_0$, as the Jacobian of the Exp$_{\theta_0}$ is $I$ at $0$ and the geodesic connecting $\hat{\theta}_n$ with $\theta_0$ coincides with the line segments connecting $\Log_{\theta_0}(\theta)$ with $0$ on the tangent space at $\theta_0$. The rest of the proof is the same as the one given in \cite[Thm. 12]{oates2022minimum}.
\end{proof}

\subsection{Applications}\label{Applications}

\subsubsection{MKSDE goodness of fit test for a distribution family}\label{MKSDE-GoF}

The goodness of fit test is a hypothesis test that tests whether a group of samples can be well-modeled by a given distribution \(p\), that is, if we denote by \(q\) the unknown underlying distribution of samples, we aim to test \(H_0:p=q\) versus \(H_1:p\neq q\). Several works \cite{chwialkowski2016kernel,liu2016kernelized} utilized the KSD to develop normalization-free goodness of fit tests on \(\mathbb{R}^d\). However, their methods only apply to a specific candidate \(p\), and requires testing individually for each member of a parameterized family \(p_\theta\). In many applications, the candidate distribution for the given samples is usually not a specific distribution but a parameterized family. In this section, \emph{we develop a one-shot method to test whether a group of samples \(x_i\) matches a family \(p_\theta\), using the MKSDE obtained by minimizing (\ref{MKSDE})}.

The \emph{MKSDE goodness of fit} performs the test \(H_0:\exists\theta_0, p_{\theta_0}=q\) versus \(H_1:\forall\theta, p_\theta\neq q\). Under the null hypothesis $H_0$,  \(n\wksd^2_n(\theta_0)\sim\sum_{j=1}^\infty\lambda_i Z_j^2\) asymptotically by (3) in theorem \ref{KSDasymptotic}. Let \(\gamma_{1-\beta}\) be the \((1-\beta)\)-quantile of \(\sum_{j=1}^\infty \lambda_j Z^2_j\) with significance level \(\beta\).  We reject \(H_0\) if \(n\wksd_n(\hat{\theta}_n)\geq \gamma_{1-\beta}\), as it implies \(n\wksd^2_n(\theta_0)\geq n\wksd^2_n(\hat{\theta}_n) \geq \gamma_{1-\beta} \), since \(\hat{\theta}_n\) minimizes \(\wksd^2_n\). 

The evaluation of \(\gamma_{1-\beta}\), relies on following result:

\begin{proposition}\label{Classical-GoF} 
Let \(\hat{\lambda}'_l\) be the eigenvalues of the Gram matrix \(n^{-1}(V_{\theta_0}(x_i,x_j))_{i j}\) (set $\hat{\lambda}'_l:=0$ for $l>n$). Suppose $V_{\theta_0}(x,x)$ is $p_{\theta_0}$-integrable. Then $\sum_{l=1}^\infty(\hat{\lambda}'_l-\lambda_l) Z^2_l \to 0$ in probability as $n\to\infty$.
\end{proposition}

\begin{proof} See \cite[Theorem 1]{gretton2009fast}.    
\end{proof}

Therefore, the $\sum_{l=1}^n \hat{\lambda}_l Z^2_l$ can serve as an empirical estimate of the asymptotic distribution $\sum_{l=1}^\infty \lambda_l Z^2_j$. 

Although the ground truth $\theta_0$ is unknown in our setting, the minimizers $\hat{\theta}_n$ of $\wksd^2_n$ converge to $\theta_0$ almost surely by theorem \ref{MKSDEconsistency} under specific conditions. Moreover, if the function $V_\theta$ is continuous in $\theta$, then we can use the eigenvalues \(\hat{\lambda}_l\) of \(n^{-1}(V_{\hat{\theta}_n}(x_i,x_j))_{i j}\) as an alternative. To sum up, we have

\begin{theorem}\label{GoF} Suppose the conditions in theorem \ref{MKSDEconsistency} hold and $\{p_\theta\}$ is identifiable, then $\sum_{l=1}^\infty (\hat{\lambda}_l-\lambda_l) Z^2_l\to 0$ in probability as $n\to\infty$.
\end{theorem}

\begin{proof} The proof is included in \S\ref{Proof-GoF}.    
\end{proof}

The MKSDE goodness of fit test algorithm is summarized in the algorithm block \ref{MKSDE-Algo}. To implement the test, we assume the conditions in theorem $\ref{MKSDEconsistency}$ hold and $\{p_\theta\}$ is identifiable.

\begin{algorithm}[H]
\caption{MKSDE goodness of fit test.}\label{MKSDE-Algo}
\begin{algorithmic}
\STATE 
\STATE \textbf{Input}: population \(x_1,\dots,x_n\sim q\); sample size \(n\); number of generations \(m\); significance level \(\beta\).
\STATE \textbf{Test:} \(H_0:\exists\theta_0, p_{\theta_0}=q\) versus \(H_1:\forall\theta,\    p_\theta \neq q\).
\STATE \textbf{Procedure:}
\STATE \hspace{0.5cm} 1. Find the minimizer \(\hat{\theta}_n\) of \(\wksd^2_n(\theta)\) respectively either numerically using gradient descent or analytically as shown in the example \ref{vMF-SO(N)}. 
\STATE \hspace{0.5cm} 2. Obtain the eigenvalues \(\hat{\lambda}_1,\dots, \hat{\lambda}_n\) of Gram matrix \(n^{-1}(V_{\hat{\theta}_n}(x_i,x_j))_{i j}\).
\STATE \hspace{0.5cm} 3. Sample \(Z^{i}_j\sim N(0,1)\), \(1\leq j\leq n\), \(1\leq i\leq m\) independently.
\STATE \hspace{0.5cm} 4. Compute \(W^i=\sum_{j=1}^n \hat{\lambda}_j (Z^i_j)^2 \).
\STATE \hspace{0.5cm} 5. Determine estimation \(\hat{\gamma}_{1-\beta}\) of \((1-\beta)\)-quantile using \(W^1,\dots,W^m\).
\STATE \textbf{Output:} Reject \(H_0\) if \(n\cdot \wksd^2_n(\hat{\theta}_n)>\hat{\gamma}_{1-\beta}\). 
\end{algorithmic}
\label{alg1}
\end{algorithm}

\subsubsection{KSD-EM algorithm}\label{KSD-EM}
The EM algorithm is implemented to estimate the parameter \(\theta\) of a probabilistic model \(x\sim p(x|z,\theta)p(z|\theta)\) with an unobserved latent variable \(z\), e.g., as in the probabilistic principal geodesic analysis (PPGA) technique \cite{zhang2014bayesian,zhang2013probabilistic,zhang2019mixture} on Riemannian manifolds. In the classical EM algorithm, with samples \(\boldsymbol{x}\), one generates an iterate \(\theta^t\) of \(\theta\) by maximizing the log-likelihood \(Q(\theta|\boldsymbol{x},\theta^{t})\) with \(z\) marginalized, i.e., \(Q(\theta|\theta^{t}):= \mathbb{E}_{z\sim p(z|\boldsymbol{x},\theta^t) } [\log p(\theta|z,\boldsymbol{x},\theta^t)]\). The expectation is estimated by \(n^{-1}\sum_i\log p(z_i,\boldsymbol{x}|\theta^t)\), with samples \(z_i\) generated from \(p(z|\boldsymbol{x},\theta^t)\).

In applications, as the normalizing constant is most often intractable, one will encounter difficulty in either generating the samples \(z_i\) or maximizing the log-likelihood. However, one can now replace the log-likelihood loss with the weighted KSD from (\ref{wMKSDE}) and cope with the issue of the intractable normalizing constants in both \(p(z|\boldsymbol{x},\theta^t)\) and \(p(\theta|z,\boldsymbol{x},\theta^t)\). That is, sample $z_1,\dots ,z_n$ from the distribution $\omega$, and minimize
\begin{equation}\label{QwKSD}
\begin{aligned}
Q_{\wksd}(\theta|\theta^t):=\frac{1}{n^2}\sum_{i,j=1}^n k_{p(z|\theta,\boldsymbol{x},\theta^t)}(z_i,z_j)\cdot p(z_i|\boldsymbol{x},\theta^t) p(z_j|\boldsymbol{x},\theta^t) \omega(z_i)^{-1}\omega(z_j)^{-1}
\end{aligned}
\end{equation}
in one shot.

The KSD-EM algorithm is summarized
in the algorithm block 2.

\begin{algorithm}[H]
\caption{KSD-Expectation Maximization}\label{KSDEM-Algo}
\begin{algorithmic}
\STATE 
\STATE \textbf{Input}: population \(\boldsymbol{x}\sim q\);  initial value $\theta^0$; iteration count $t=0$; error tolerance $\epsilon$.
\WHILE{$|Q_{\wksd}(\theta^t|\theta^{t-1})-Q_{\wksd}(\theta^{t-1}|\theta^{t-2})|>\epsilon$}
\STATE \hspace{0.5cm} 1. Sample $z_1,\dots,z_n\sim \omega$.
\STATE \hspace{0.5cm} 2. Compute $\theta^{t+1}:=\argmin_\theta Q_{\wksd}(\theta|\theta^t)$ numerically or analytically.
\STATE \hspace{0.5cm} 3. $t\leftarrow t+1$.
\ENDWHILE

\STATE \textbf{Output:} $\theta^t$.
\end{algorithmic}
\label{alg2}
\end{algorithm}

\section{KSD and MKSDE on \texorpdfstring{\(\SO(N)\)}{SO(N)}} \label{ExpSO(N)}

As \(\SO(N)\) is one of the most widely encountered Lie group in applications, we demonstrate the mechanics of deriving  the function \(k_p(\cdot,\cdot)\) in (\ref{KSDForm}) and closed form of MKSDE obtained by minimizing (\ref{wMKSDE}) for commonly used distribution families on \(\SO(N)\) namely, the exponential family and the Riemannian normal distribution. 

\paragraph{Notations} For notational convenience, we set up following notations in this section:
\begin{center}
\begin{tabular}{ |c|l|   } 
  \hline
  $\mathscr{A}(A)$ & the skew-symmetrization \(\mathscr{A}(A)=(A-A^\top)/2\) of a squared matrix $A$ \\ 
  \hline
  $\vectz(A)$ & vectorization of a matrix $A$ by stacking the columns \\ 
  \hline
  $\otimes$ & the Kronecker product, see \cite{van2000ubiquitous} for example
  \\
  \hline
  $\Log$ & the matrix logarithm \\ 
  \hline
  \multirow{2}{*}{$S_{N,N}$} & the perfect shuffle matrix \cite{van2000ubiquitous},  defined as \((S_{N,N})_{a b}=1\) whenever \(a=Nk+l-N\) \\
  & and \(b=Nl+k-N\) for some \(1\leq k,l\leq N\), and equals \(0\) otherwise. 
  \\
  \hline
  \multirow{2}{*}{$\nabla_X$} & the Euclidean gradient $\nabla_X f\in\mathbb{R}^{N\times N}$ of a differentiable function on $\SO(N)$ is defined as\\
  & a $N\times N$ matrix such that $Df(X)=\tr[D^\top\nabla_X f]$ for all $D\in T_X\SO(N)$.
  \\
  \hline
\end{tabular}
\end{center}

\paragraph{Commonly-used families} We now introduce the commonly-used distribution families in Lie Groups:

\begin{itemize}
    \item {\it Exponential family}: 
    \[
p(X|\theta)\propto \exp(\theta^\top \zeta(X)+\eta(X)), \quad \theta\in\mathbb{R}^m, m\in\mathbb{N},
\]
where $\zeta :\SO(N)\to\mathbb{R}^m$ and $\eta:\SO(N)\to\mathbb{R}$ are continuously differentiable. Note that the widely used von Mises-Fisher family given by,
\[
p(X|F)\propto \exp(\tr(F^\top X)), \quad F\in\mathbb{R}^{N\times N},
\]
is a member of the exponential family, as $\tr(F^\top X)=\vectz(F)^\top \vectz(X)$. 
 \item {\it Riemannian normal family}:
    \[
p(X|\Bar{X},\sigma)\propto \exp(-\frac{1}{2 \sigma^2}\tr(\Log(\Bar{X}^\top X)^\top \Log(\Bar{X}^\top X))),\quad \Bar{X}\in \SO(N), \sigma>0.
\]
It should be noted that the Riemannian normal is \emph{not} a member of the exponential family as it uses the intrinsic distance function on $\SO(N)$.
\end{itemize}

\paragraph{Choice of kernel on $\SO(N)$} 
Earlier, we chose as our $C_0$-universal kernel, the bivariate function $\exp(-\tau/2 ||x-y||^2), \tau > 0$ restricted to a closed subset $X \subset \mathbb{R}^m$. We now choose $X$ to be $\SO(N)$,
since $\SO(N)$ is closed in \(\mathbb{R}^{N\times N}\), the Gaussian kernel \(e^{-\frac{\tau}{2}\tr((X-Y)^\top (X-Y))}\) restricted to \(\SO(N)\) is \(C_0\)-universal, which becomes
\[
k(X,Y)=\exp(\tau\tr(X^\top Y)), \quad X,Y\in\SO(N),
\] 
where \(\tau>0\) is arbitrary. As mentioned earlier, the key to the choice of the kernel is the \(C_0\)-universality condition and  one can choose other kernels such as the Laplace or Mat\'{e}rn, both of which can be shown to satisfy the universality condition based on the results in \cite{Sriperumbudur2008COLT,Sriperumbudur2011JMLR}. The
recipe for the derivation of closed form expressions for the restriction of these kernels to $\SO(N)$ and the MKSDE is rather complex and tedious but in principle similar to that presented here. Hence we will only focus on the above chosen restriction of the Gaussian kernel to $\SO(N)$.

\paragraph{Vector field basis on $\SO(N)$} Let \(E_{i j}\), \(1\leq i< j\leq N\) be the matrix with all zeros except \(\sqrt{2}/2\) at the \((i,j)^{th}\) entry and \(-\sqrt{2}/2\) at \((j,i)^{th}\) entry of the matrix. Then \(X E_{i j}\), \(X\in \SO(N)\), is a standard orthonormal left-invariant basis on \(\SO(N)\).

\subsection{\texorpdfstring{$k_p$}{kp} functions on \texorpdfstring{\(\SO(N)\)}{SO(N)}}

Let $\mathcal{A}^{i j}_p$ represent the operator component corresponding to $XE_{i j}$. Clearly,
\begin{equation}\label{Kpcomponents}
 \begin{aligned}
\Tilde{\mathcal{A}}^{i j}_p \mathcal{A}^{i j}_p k(X,Y)&= \tr[(\nabla_X \log p+\tau Y)^\top X E_{i j}] \cdot \tr[(\nabla_Y \log p+ 
\tau X)^\top Y E_{i j}]\cdot e^{\tau\tr(X^\top Y)} \\ 
&+ \tau\tr(E^\top_{i j}X^\top Y E_{i j}) \cdot e^{\tau\tr(X^\top Y)}. 
 \end{aligned}
\end{equation}
Recall that $E_{i j}$ is an orthonormal basis of the space Skew$(N)$ of all skew-symmetric matrices with the inner product $\langle \cdot,\cdot \rangle:=\tr( \cdot^\top \cdot )$. Note that $\tr(A^\top E_{i j} )$ is the inner product of $A$ and $E_{i j}$, which actually equals $\tr(\mathscr{A}(A)^\top E_{i j})$, as $\mathscr{A}(A)$ is the orthogonal projection of $A$ onto the space Skew$(N)$. Furthermore, for any two matrices $A$ and $B$, 
\[
\begin{aligned}
    \sum_{i<j}\tr(A^\top E_{i j})\tr(B^\top E_{i j})&=
    \sum_{i<j}\tr(\mathscr{A}(A)^\top E_{i j})\tr(\mathscr{A}(B)^\top E_{i j})= 
    \sum_{i<j} \langle \mathscr{A}(A),E_{i j} \rangle\cdot \langle \mathscr{A}(B),E_{i j} \rangle\\
    &= \langle\mathscr{A}(A), \mathscr{A}(B)\rangle = \tr[\mathscr{A}(A)^\top \mathscr{A}(B)].
\end{aligned}
\]
Therefore, the first term on the RHS in (\ref{Kpcomponents}) 
becomes
\[ 
\begin{aligned}
\tr[\mathscr{A}(X^\top(\nabla_X \log p +\tau Y))^\top \mathscr{A}(Y^\top(\nabla_Y \log p+\tau X)) ] \cdot e^{\tau \tr (X^\top Y)}.
\end{aligned}
 \]
Moreover, note that $E_{i j} E^\top_{i j}$ equals the matrix which has all $0$s except two $(1/2)$s at $(i,i)^{th}$ and $(j,j)^{th}$ entry respectively. The sum of $E_{i j} E^\top_{i j}$, $1\leq i<j\leq N$ actually equals $\frac{N-1}{2} I$. Recall that one can commute matrices inside $\tr(\cdot)$, thus the second term on the RHS of (\ref{Kpcomponents}) 
becomes,
\[ c(X,Y):=\frac{\tau}{2}(N-1)\tr(X^\top Y) e^{\tau \tr (X^\top Y)}. \]
Combining the two terms we obtain 
\begin{equation}\label{Kp-SO(N)-Eq}
\begin{aligned}
k_p(X,Y)
=\tr[\mathscr{A}(X^\top(\nabla_X \log p +\tau Y))^\top \mathscr{A}(Y^\top(\nabla_Y \log p+\tau X)) ] \cdot e^{\tau \tr (X^\top Y)}+ \frac{\tau}{2}(N-1)\tr(X^\top Y) e^{\tau \tr (X^\top Y)}
\end{aligned}
\end{equation}
Note that the last term, denoted as \(c:=c(X,Y)\) in what follows is independent of the distribution \(p\), and hence the parameters of $p$. We are now ready to substitute the expression of the gradient of $\log p$  for the exponential family and the von Mises-Fisher in particular, as well as the Riemannian normal into the above expression for the chosen kernel on $\SO(N)$ leading to closed form expressions of the kernel for each case.

\paragraph{Exponential family} Substituting the Euclidean gradient in to (\ref{Kp-SO(N)-Eq}) we have
\begin{equation}
k^{\text{Exp}}_\theta(X,Y)=c(X,Y)+\tr[\mathscr{A}(X^\top(\nabla_X(\theta^\top \zeta(X)+ \eta(X))+\tau Y))^\top \mathscr{A}(Y^\top(\nabla_Y(\theta^\top \zeta(Y)+ \eta(Y))+\tau X)) ] \cdot e^{\tau \tr (X^\top Y)}.
\end{equation}

\paragraph{von Mises-Fisher} The Euclidean gradient of
$\nabla_X\log p=\tr(F^\top X)$ is exactly $F$. 5Substituting this in to (\ref{Kp-SO(N)-Eq}) we have,
\begin{equation}\label{KSD-SO(N)-vMF}
\begin{split}
k^{\vmf}_F(X,Y) = c(X,Y)+\tr[\mathscr{A}(X^\top(F+\tau Y))^\top \mathscr{A}(Y^\top(F+\tau X)) ] \cdot e^{\tau \tr (X^\top Y)}.
\end{split}
\end{equation}

\paragraph{Riemannian normal} Let \(\varsigma=\sigma^{-2}\). The Euclidean gradient of $\log p = -\frac{\varsigma}{2} \tr[\Log(\Bar{X}^\top X)\Log(\Bar{X}^\top X)] $ at $X$ is $\nabla_X\log p = \varsigma X\Log(X^\top\Bar{X})$. Substituting this in to (\ref{Kp-SO(N)-Eq}) leads to the following form:
\begin{equation}\label{KSD-SO(N)-RN}
\begin{split}
k^{\text{RN}}_{\Bar{X},\varsigma}= c+\tr[(\varsigma\Log(X^\top\Bar{X})+\tau \mathscr{A}(X^\top Y))^\top (\varsigma\Log(Y^\top\Bar{X})+\tau \mathscr{A}(Y^\top X)) ] \cdot e^{\tau \tr (X^\top Y)}.
\end{split}
\end{equation}

\subsection{Closed Form of MKSDE for the exponential family on \texorpdfstring{$\SO(N)$}{SO(N)}}\label{MKSDE-Exp-SO(N)}

A closed form solution of MKSDE will evidently reduce the computational cost. Apparently, there is no general solution for any distribution family, since the parametrization $\alpha\mapsto p_\alpha$ of a family, as a map from the parameter space to the space of distributions, can be in any one of many different forms. 

However, it is noteworthy that if $\log p_\theta$ is linearly parametrized by the parameter $\theta$, i.e., $\log p_\theta= \zeta(X)^\top\theta+\eta(X)+C$, for some $\zeta$ and $\eta$ (which gives us exactly the exponential family), then the $k_p$ function in (\ref{Kp-SO(N)-Eq}) on $\SO(N)$ is a quadratic form of $\theta$, so that the MKSDE obtained by minimizing the weighted KSD in (\ref{wMKSDE}) will have a closed form. 

In this section, we derive the closed form of MKSDE for the exponential family on $\SO(N)$.
For notational convenience, we denote $\Pi_X=\nabla_X\log p(X)$. First, we split $k_p$ in (\ref{Kp-SO(N)-Eq}) into several terms of different orders of $\Pi_X$ and $\Pi_Y$:
\[
\begin{aligned}
k^{\text{Exp}}_\theta(X,Y)&=\tr[\mathscr{A}(X^\top \Pi_X)^\top\mathscr{A}(Y^\top\Pi_Y)]+\tau\tr[\mathscr{A}(Y^\top X)^\top X^\top \Pi_X ] +\tau\tr[\mathscr{A}(X^\top Y)^\top Y^\top \Pi_Y ] + C'\\
&= \underbrace{\frac{1}{2} \tr[\Pi_X^\top X Y^\top \Pi_Y ]}_{(I)}-\underbrace{\frac{1}{2}\tr[ X^\top \Pi_X Y^\top \Pi_Y ]}_{(II)}+\tau\underbrace{\tr[\mathscr{A}(Y^\top X)^\top X^\top \Pi_X ]}_{(III)} +\tau\underbrace{\tr[\mathscr{A}(X^\top Y)^\top Y^\top \Pi_Y ]}_{(IV)} + C'
\end{aligned}
\]
The constant $C'$ is independent of the $\Pi_X$ and $\Pi_Y$, thus also independent of the parameters of $p$. The second-order terms will equal
\[
\begin{aligned}
(I) &=\frac{1}{2}[(I\otimes X^\top) \vectz(\Pi_X)]^\top [(I\otimes Y^\top)\vectz(\Pi_Y)]=\frac{1}{2}\vectz(\Pi_X)^\top(I\otimes X Y^\top) \vectz(\Pi_Y),\\
(II)&=
\frac{1}{2}\vectz(\Pi_X)^\top (Y^\top\otimes X) \vectz(\Pi_Y^\top)=\frac{1}{2}\vectz(\Pi_X)^\top(Y^\top\otimes X) S_{N,N} \vectz(\Pi_Y).
\end{aligned}
\]
The first-order terms will equal 
\[
\begin{aligned}
(III) &=\vectz( X\mathscr{A}(Y^\top X))^\top\vectz(\Pi_X),\\
(IV)&= \vectz(Y\mathscr{A}(X^\top Y))^\top\vectz(\Pi_Y).
\end{aligned}
\]
Combining all the terms, we have
\begin{equation}\label{kp-vec-Pi}
\begin{aligned}
k^{\text{Exp}}_\theta(X,Y)&=C'+\frac{1}{2} \vectz(\Pi_X)^\top (I\otimes XY^\top-Y^\top\otimes X\cdot S_{N,N})\vectz(\Pi_Y)\\
&+\tau \vectz( X\mathscr{A}(Y^\top X))^\top\vectz(\Pi_X)+\tau\vectz(Y\mathscr{A}(X^\top Y))^\top\vectz(\Pi_Y).
\end{aligned}
\end{equation}

Next, we represent $\Pi_X$ and $\Pi_Y$ by $\theta$. As $\theta^\top \zeta(X)$ is a scalar functions on $\SO(N)$, its Euclidean gradient $\nabla_X (\theta^\top \zeta(X)) $ is a $N\times N$ matrix. However, if we extract $\theta$ from the Euclidean gradient, i.e., $\nabla_X (\theta^\top \zeta(X))=\theta^\top\nabla_X\zeta(X)$, then the Euclidean gradient $\nabla_X\zeta(X)$ will be a \emph{3D matrix} of dimension $m\times N\times N$, since $\zeta(X)$ is a vector-valued function of $X$. To appropriately tackle this issue, we vectorize the  $N\times N$ dimensional component of
$\nabla_X\zeta(X)$, so that it is represented by a $m\times N^2$-dimensional 2D matrix, as elaborated upon next.

We denote $\zeta(X):=(\zeta_1(X),\dots,\zeta_m(X))^\top$, then each of $\nabla_X\zeta_i(X)$ is a $N\times N$ matrix. We vectorize each $\nabla_X \zeta_i(X)$ and stack them by rows to get 
\[
\mathcal{Z}(X):= 
\begin{bmatrix}
\vectz(\nabla_X\zeta_1(X))^\top\\
\vdots
\\
\vectz(\nabla_X\zeta_m(X))^\top
\end{bmatrix} \in \mathbb{R}^{m\times N^2}.
\]
Note that $Z(X)^\top\theta =\vectz(\nabla_X[\theta^\top \zeta(X)])$. Therefore, we have $\vectz(\Pi_X)= \mathcal{Z}(X)^\top \theta+\vectz(\nabla_X\eta(X))$. Substituting this into (\ref{kp-vec-Pi}) we get
\begin{equation}\label{kp-vec-theta}
\begin{aligned}
k^{\text{Exp}}_\theta(X,Y)&=\frac{1}{2}\theta^\top \mathcal{Z}(X) (I\otimes XY^\top-Y^\top\otimes X\cdot S_{N,N}) \mathcal{Z}(Y)^\top \theta\\
&+ \frac{1}{2}\vectz(\nabla_X\eta(X))^\top(I\otimes XY^\top-Y^\top\otimes X\cdot S_{N,N}) \mathcal{Z}(Y)^\top \theta
\\
&+ \frac{1}{2}\vectz(\nabla_Y\eta(Y))^\top(I\otimes YX^\top- X^\top \otimes Y\cdot S_{N,N} ) \mathcal{Z}(X)^\top \theta
\\
&+\tau \vectz( X\mathscr{A}(Y^\top X))^\top\mathcal{Z}(X)^\top \theta +\tau\vectz(Y\mathscr{A}(X^\top Y))^\top\mathcal{Z}(Y)^\top \theta.
\end{aligned}
\end{equation}
Although above formula is rather lengthy and monstrous, it is noteworthy that the first-order terms can be combined together by the summation in the weighted KSD in (\ref{wMKSDE}), as they are symmetric with respect to $X$ and $Y$. We summarize the results for the exponential family in the following theorem:
\begin{theorem}\label{Exp-SO(N)} Suppose $X_i\in\SO(N)$ are samples from $w$ and $p_\theta$ is the exponential family on $\SO(N)$. Given kernel $k(X,Y)=\exp(\tau\tr(X^\top Y))$ on $\SO(N)$, the global minimizer of the weighted KSD in (\ref{wMKSDE}) has a closed form given below:
\begin{equation}\label{Exp-SO(N)-Eq}
\begin{aligned}
\text{Let} \ b&=\frac{1}{2n^2} \sum_{i,j} \vectz(\nabla_{X_i}\eta(X_i))^\top(I\otimes X_i X_j^\top-X_j^\top\otimes X_i\cdot S_{N,N}) \mathcal{Z}(X_j)^\top\cdot e^{\tau \tr(X_i^\top X_j)} \frac{q(X_i)}{w(X_i)}\cdot \frac{q(X_j)}{w(X_j)} \\
&+ \frac{\tau}{n^2} \sum_{i,j} \vectz(X_i\mathscr{A}(X_j^\top X_i))^\top \mathcal{Z}(X_i)^\top \cdot e^{\tau \tr(X_i^\top X_j)} \frac{q(X_i)}{w(X_i)}\cdot \frac{q(X_j)}{w(X_j)}
\\
\text{and} \ A&=  \frac{1}{2 n^2}\sum_{i,j} \mathcal{Z}(X_i) (I\otimes X_i X_j^\top-X_j^\top\otimes X_i\cdot S_{N,N}) \mathcal{Z}(X_j)^\top \cdot e^{\tau \tr(X_i^\top X_j)} \frac{q(X_i)}{w(X_i)}\cdot \frac{q(X_j)}{w(X_j)}.
\end{aligned}  
\end{equation}
Then, $\hat{\theta}_n= -A^{-1} b$. For the unweighted case, we just ignore the weighted ratio \(\frac{q(X_i)q(X_j)}{w(X_i)w(X_j)}\).
\end{theorem}
\begin{proof}
Since (\ref{kp-vec-theta}) is a quadratic function of $\theta$, the weighted KSD in (\ref{wMKSDE}), 
which is a weighted sum of $k_p$ function in (\ref{kp-vec-theta}),
will remain a quadratic function of $\theta$. Therefore, the global minimizer is $\hat{\theta}_n=-A^{-1} b$. 
\end{proof}

\begin{example}[MKSDE of vMF]\label{vMF-SO(N)} We now consider the MKSDE of the von Mises-Fisher family on $\SO(N)$. Note that $\log p = \tr(F^\top X)+C=\vectz(F)^\top\vectz(X)$, so that von Mises-Fisher family is a member of the exponential family if we set $\theta:=\vectz(F)$. Since $\zeta(X)=\vectz(X)$ and $\eta(X)=0$ in this case, we have $\mathcal{Z}(X)=I_{N^2\times N^2}$ and $\nabla_X\eta(X)=0$. Substituting this into (\ref{Exp-SO(N)}) yields the closed form of MKSDE given below,
\begin{equation}\label{vMF-SO(N)-Eq}
\begin{aligned}
\text{Let} \ b&= \frac{\tau}{n^2}\sum_{i,j} \vectz[X_i\mathscr{A}(X_i^\top X_j)] e^{\tau \tr(X_i^\top X_j)}\cdot \frac{q(X_i)}{w(X_i)}\cdot \frac{q(X_j)}{w(X_j)}, \\
\text{and} \ A&=  \frac{1}{2 n^2}\sum_{i,j} [I\otimes X_i X_j^\top - (X_i^\top\otimes X_j) S_{N,N}]\cdot e^{\tau\tr( X_i^\top X_j) } \cdot \frac{q(X_i)}{w(X_i)}\cdot \frac{q(X_j)}{w(X_j)}.
\end{aligned}  
\end{equation}
Then, \(\widehat{F}_{\wksd}= \vectz^{-1}(A^{-1} b)\). For the unweighted case, we just ignore the weighted ratio \(\frac{q(X_i)q(X_j)}{w(X_i)w(X_j)}\).
\end{example}
\section{Experiments}\label{experiments}
In this section we present two experiments to validate the advantage of our KSD over MLE. In the first experiment, we compare the MKSDE of \(F\), a parameter of the vMF presented in example \ref{vMF-SO(N)}, to its MLE, illustrating that the issue of normalizing constant impairs the accuracy of MLE but has no affect on the MKSDE. In this experiment, based on estimated parameter $F$, we also estimate the orientation of a 3D object from a publicly available database, ModelNet10 \cite{wu20153d}. In the second experiment, we compare the Cayley distribution \cite{leon2006statistical} to the vMF via a goodness of fit test introduced in \S\ref{MKSDE-GoF} to illustrate the power of our test. In these experiments, we set $\tau=1$. Code for all the experiments in this
paper is provided on GitHub at \url{https://github.com/cvgmi/KSD-on-Lie-Groups}.

\subsection{MKSDE vs. MLE}\label{MKSDEvsMLE}

\paragraph{vMF Parameter Estimation} The exact numerical solution of MLE for vMF requires the computation of the inverse of the derivative of the normalizing constant, a hypergeometric function of the parameter \(F\). The commonly used MLE technique \cite[\S 13.2.3]{mardia2000directional} uses two direct approximate solutions, one is for the case when \(F\) is small while the other is for large \(F\). Although the solution for small \(F\) is reasonably accurate, the solution method for large \(F\) is cumbersome and hard to implement, and both solutions poorly approximate for in-between values of \(F\). Therefore, one must bear with either the inaccuracy of a direct approximate solution or the computational cost of achieving an convergent numerical solution.

Figure \ref{MKSDE&MLE} depicts the Frobenius distance of \(\hat{F}_{\ksd}\) and \(\hat{F}_{\mle}\) to the ground truth \(F_0\), with varying values of $F_0$ and varying sample size \(n\). For medium valued \(F_0\), e.g., \(F=5I\), we used the approximation for small \(F_0\). The last "random" \(F_0\) is drawn randomly. It is evident that the accuracy of MLE decreases as \(F_0\) becomes larger and the approximation worsens, while MKSDE remains accurate for all values of \(F_0\). This result demonstrates the accuracy and stability of MKSDE over MLE.

\paragraph{Object Orientation Estimates:} We now describe an experiment where, samples are first drawn from a vMF on $\SO(3)$, with known parameter $F$.  The vMF can then be sampled to generate distinct orientations, $X_i \in \SO(3)$ which are applied to objects in the ModelNet10\cite{wu20153d} database of CAD (computer aided design)  models and sample models are depicted in figure \ref{CADmodels}. Now, the goal of this experiment is to estimate the parameter $F$ given the samples using MKSDE and MLE yielding $\hat{F}_{\ksd}$ and $\hat{F}_{\mle}$ respectively, which can then be compared. Note that once the parameter $F$ is estimated from the samples, we can then compute an estimate of the mode of the distribution (see \cite{EggertMVA97}) which we will denote here by $\hat{X}$, given by $\hat{X} = U \text{diag}(1,1,\det(UV))V^t$, where, $U$ and $V$ are orthogonal matrices obtained from the singular value decomposition (SVD) of $\hat{F}$. We can then easily compare the ground truth object orientation of the CAD models in the data base to estimated object orientations (that will correspond to the estimated mode of the vMF). Before presenting these object orientation estimates, we present comparisons between mKSDE and MLE estimates of the parameter $F$.


\begin{figure*}[!t]
\centering
\subfloat[\(F_0 = 0.1*I\)]{\includegraphics[width =0.3\linewidth]{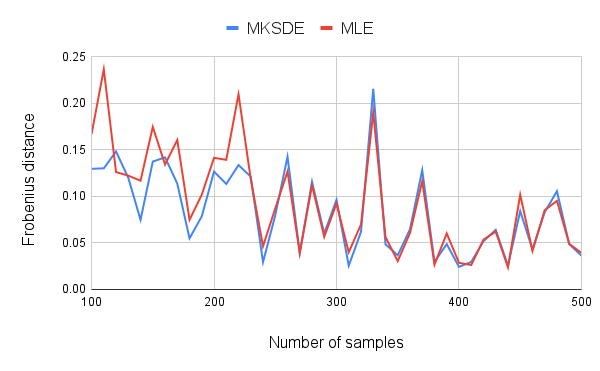}}
\hfil
\subfloat[\(F_0 = 0.5*I\)]{\includegraphics[width =0.3\linewidth]{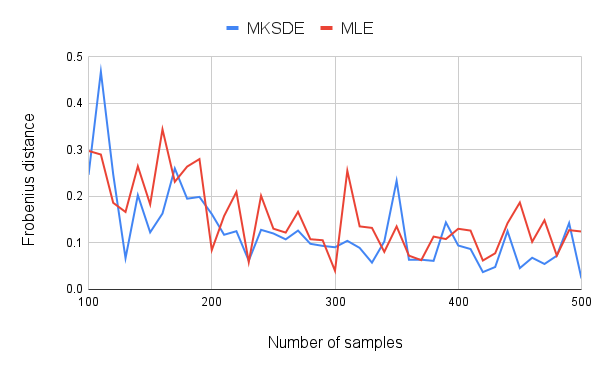}}
\hfil
\subfloat[\(F_0 = I\)]{\includegraphics[width =0.3\linewidth]
{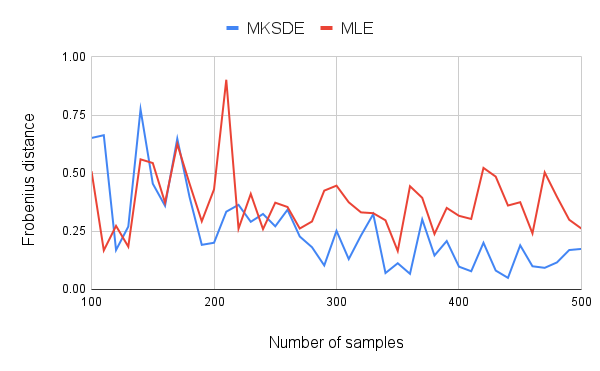}}
\hfil
\subfloat[\(F_0 = 5*I\)]{\includegraphics[width =0.3\linewidth]
{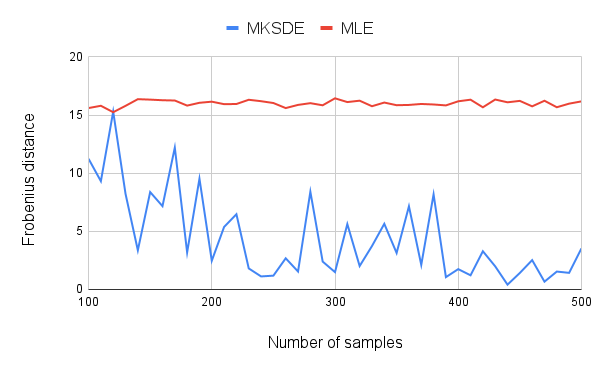}}
\hfil
\subfloat[\(F_0=\text{diag}(0.1,0.2,0.3)\)]{\includegraphics[width =0.3\linewidth]
{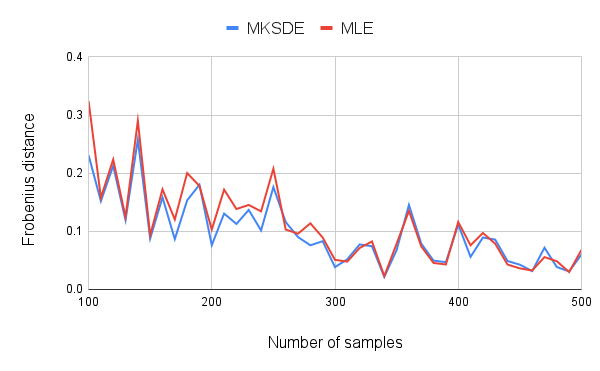}}
\hfil
\subfloat[\(F_0 =\) random]{\includegraphics[width =0.3\linewidth]
{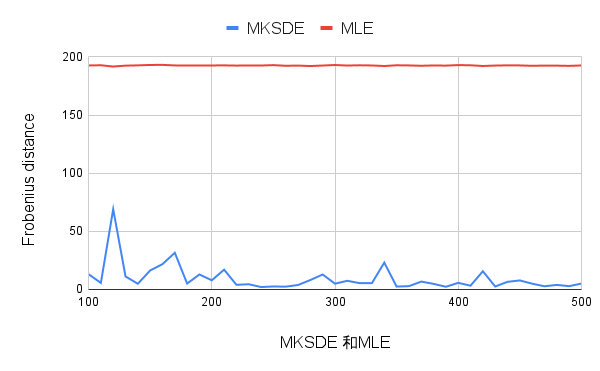}}
\centering
\caption{$F$-norms between estimators and ground truth }
\label{MKSDE&MLE}
\label{fig:mle}
\end{figure*}




\begin{figure}[!t]
\centering
\subfloat[\(F_0 = I\)]{
    \begin{minipage}[t]{0.15\textwidth}
    \includegraphics[width = \textwidth]{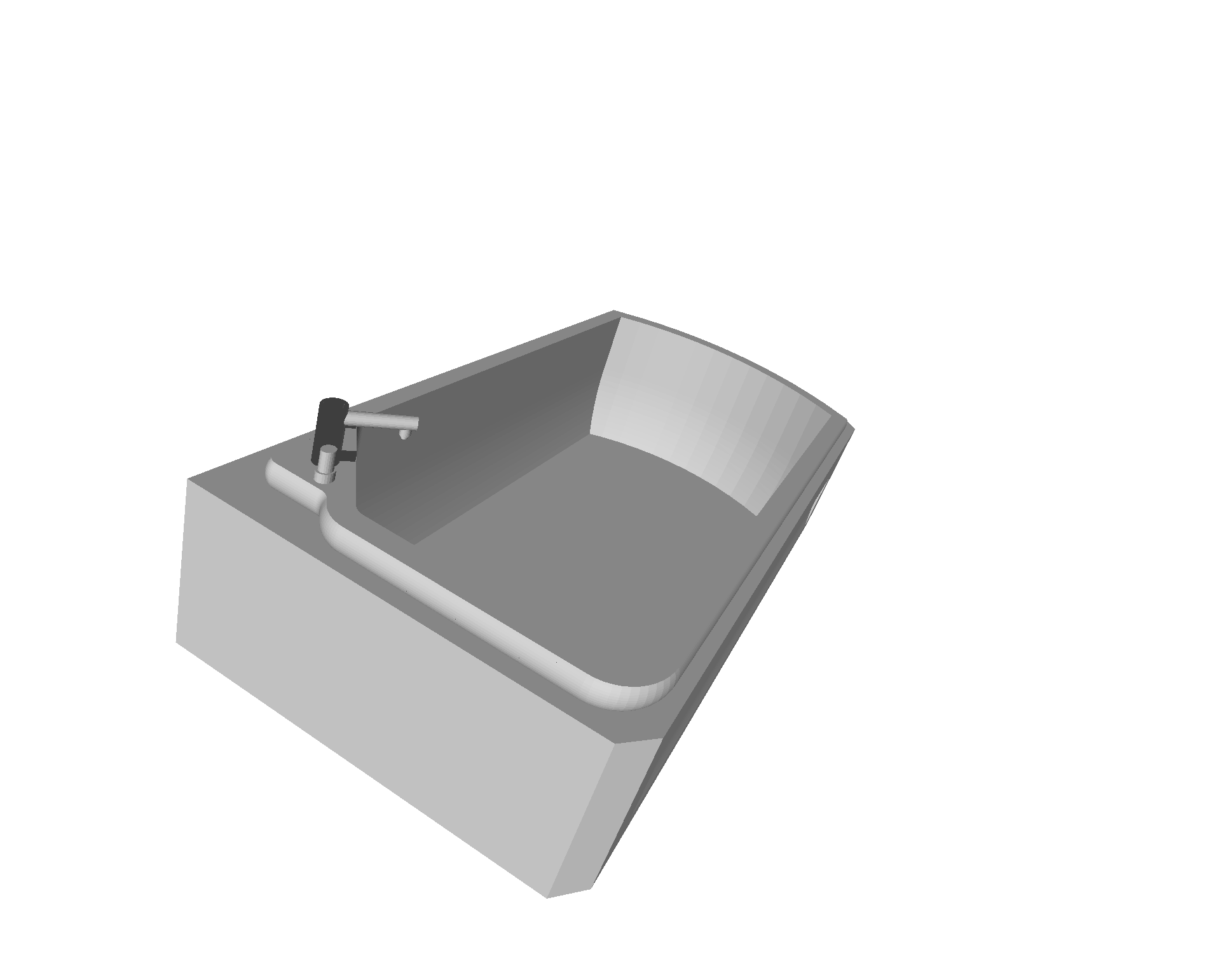}
    \end{minipage}
    \begin{minipage}[t]{0.15\textwidth}
    \includegraphics[width = \textwidth]{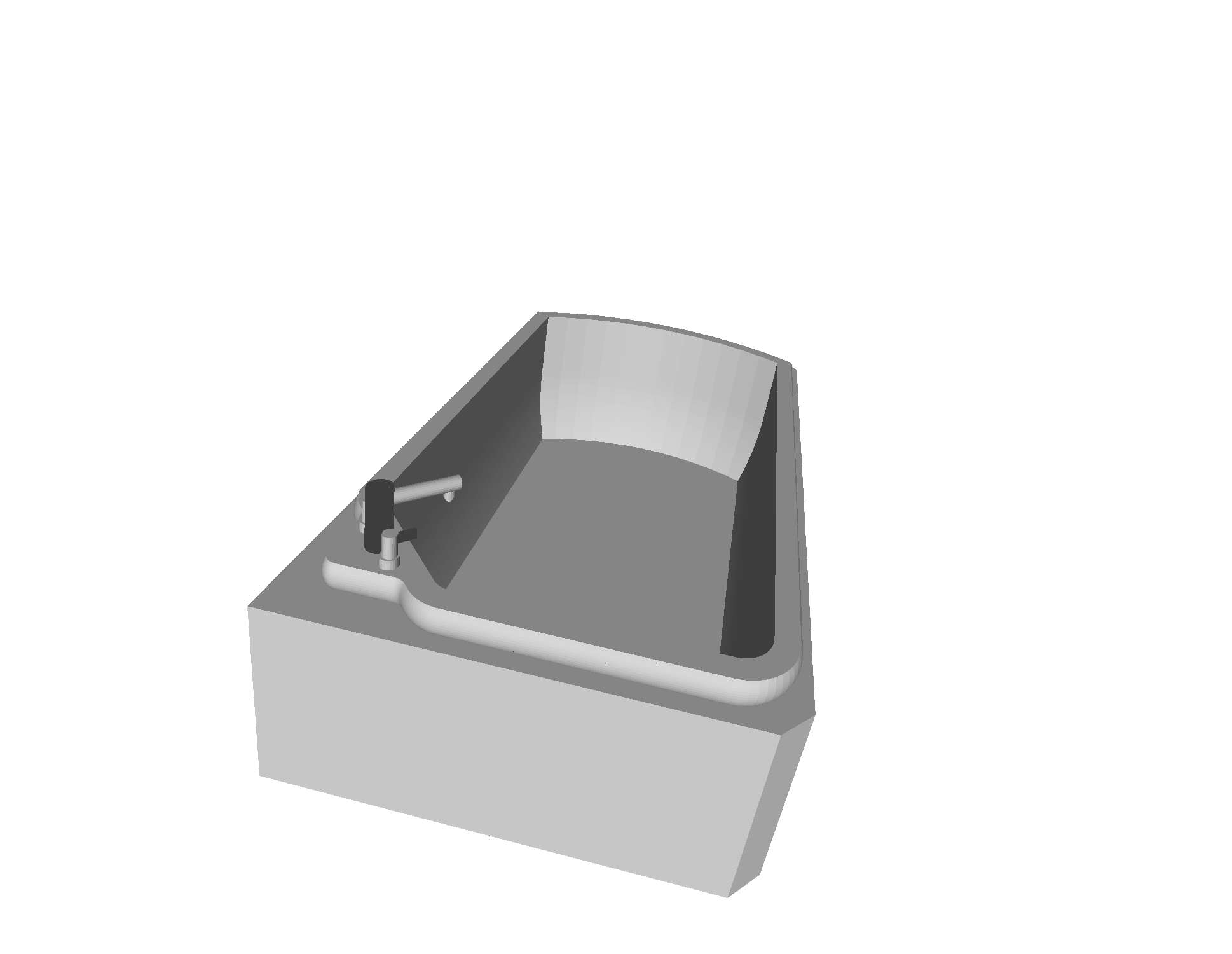}
    \end{minipage}
    \begin{minipage}[t]{0.15\textwidth}
    \includegraphics[width = \textwidth]{Image/bathtub02.png}
    \end{minipage}
}

\subfloat[\(F_0 = 5*I\)]{
    \begin{minipage}[t]{0.15\textwidth}
    \includegraphics[width = \textwidth]{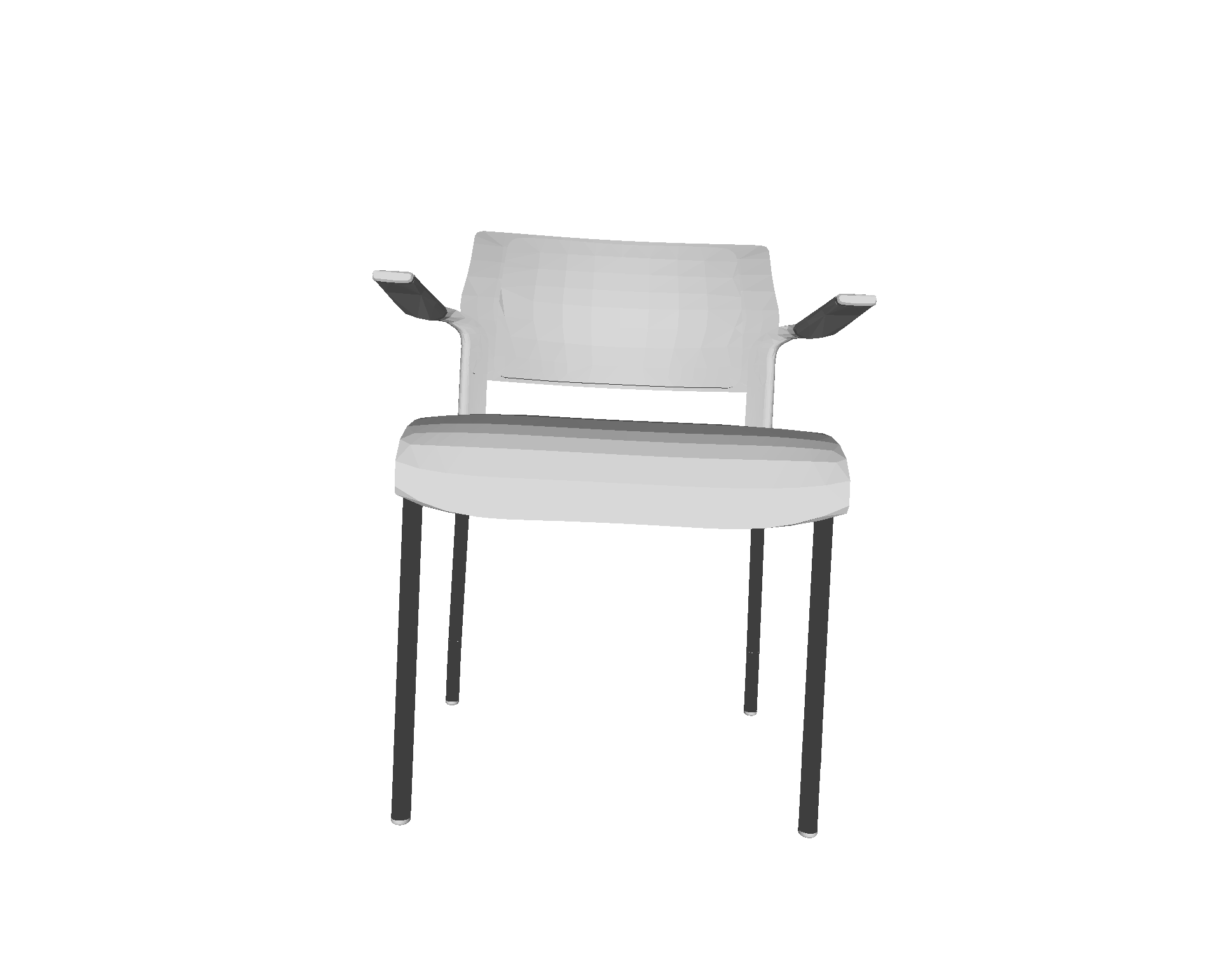}
    \end{minipage}
    \begin{minipage}[t]{0.15\textwidth}
    \includegraphics[width = \textwidth]{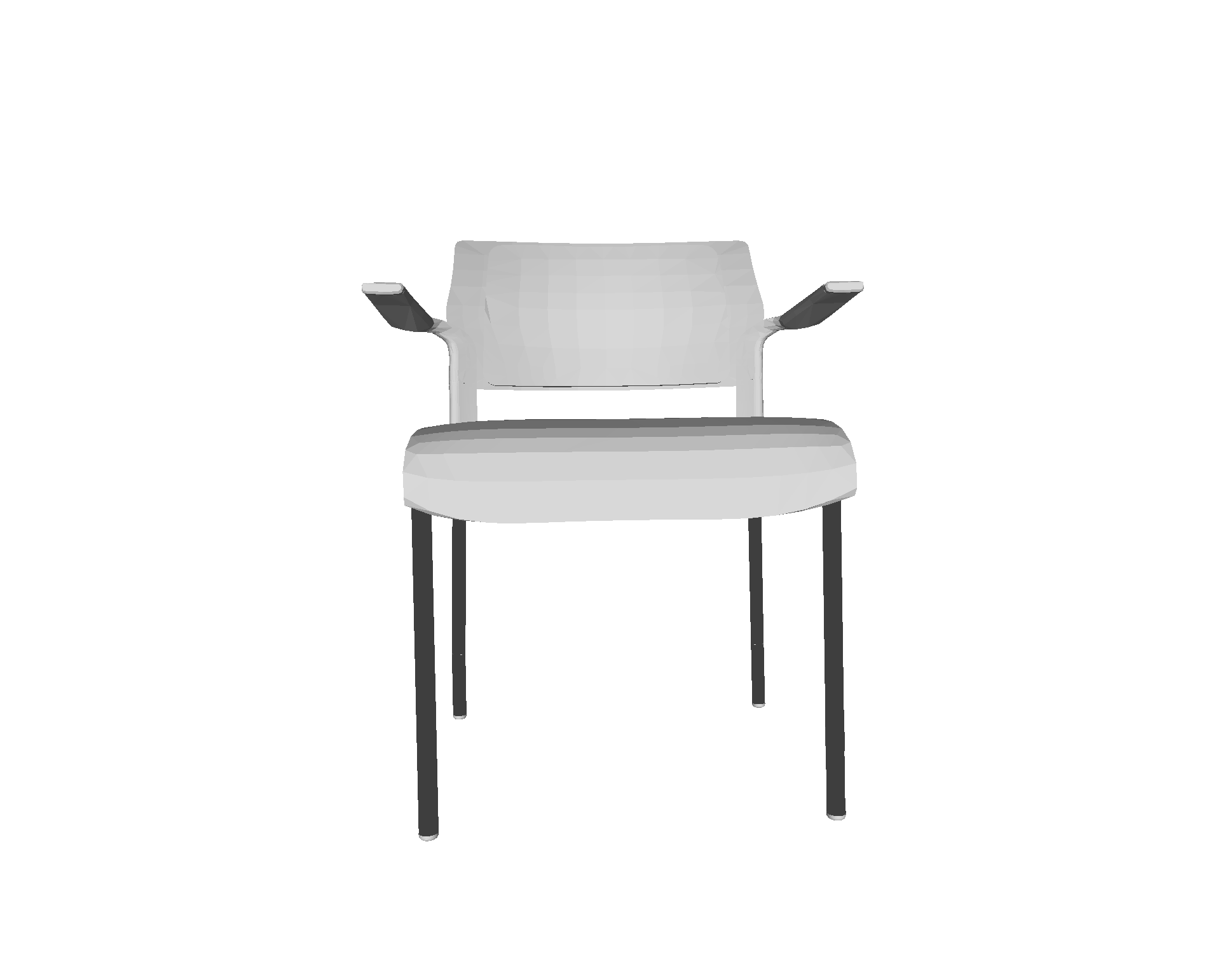}
    \end{minipage}
    \begin{minipage}[t]{0.15\textwidth}
    \includegraphics[width = \textwidth]{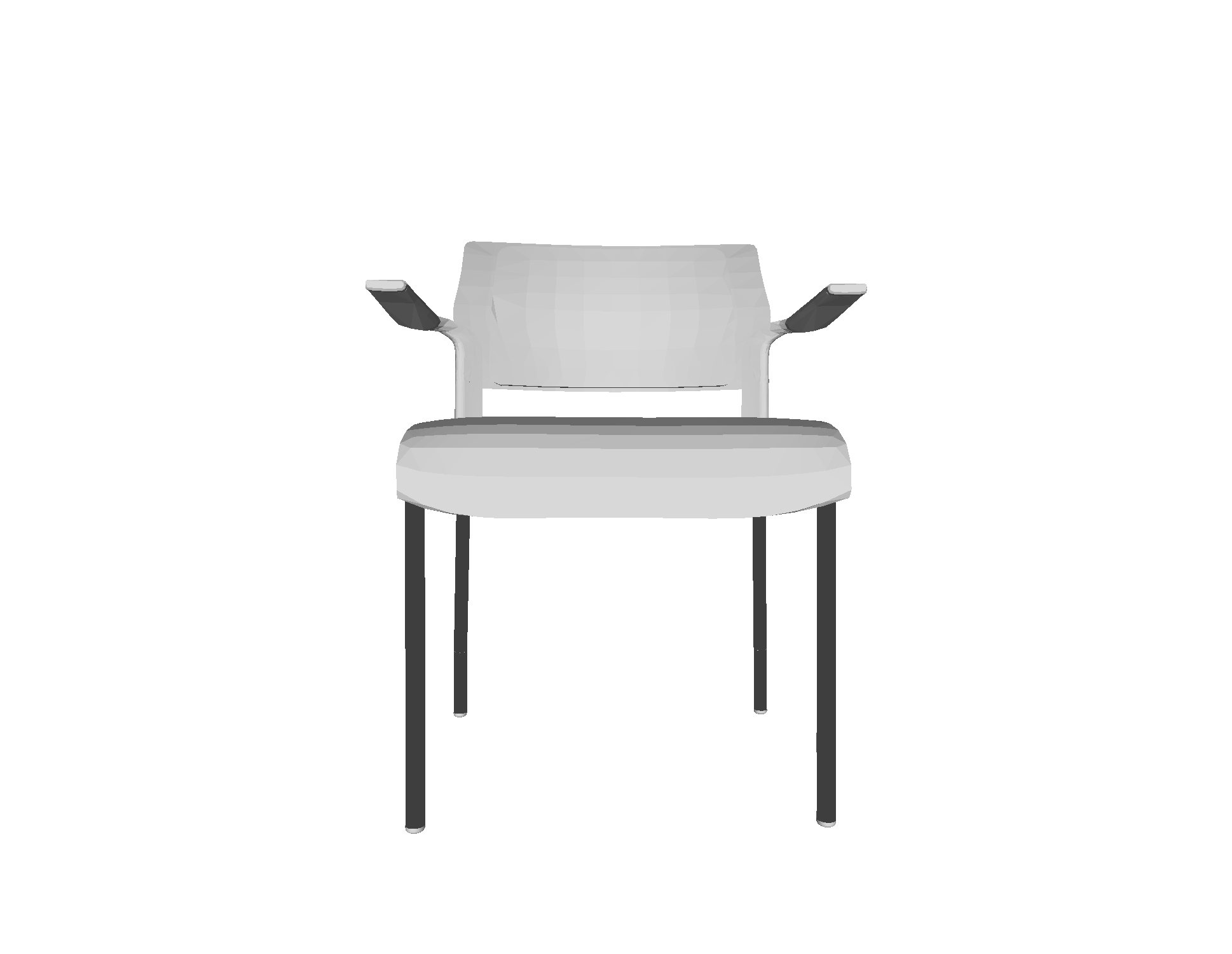}
    \end{minipage}
}

\subfloat[\(F_0=\text{diag}(0.1,0.2,0.3)\)]{
    \begin{minipage}[t]{0.15\textwidth}
    \includegraphics[width = \textwidth]{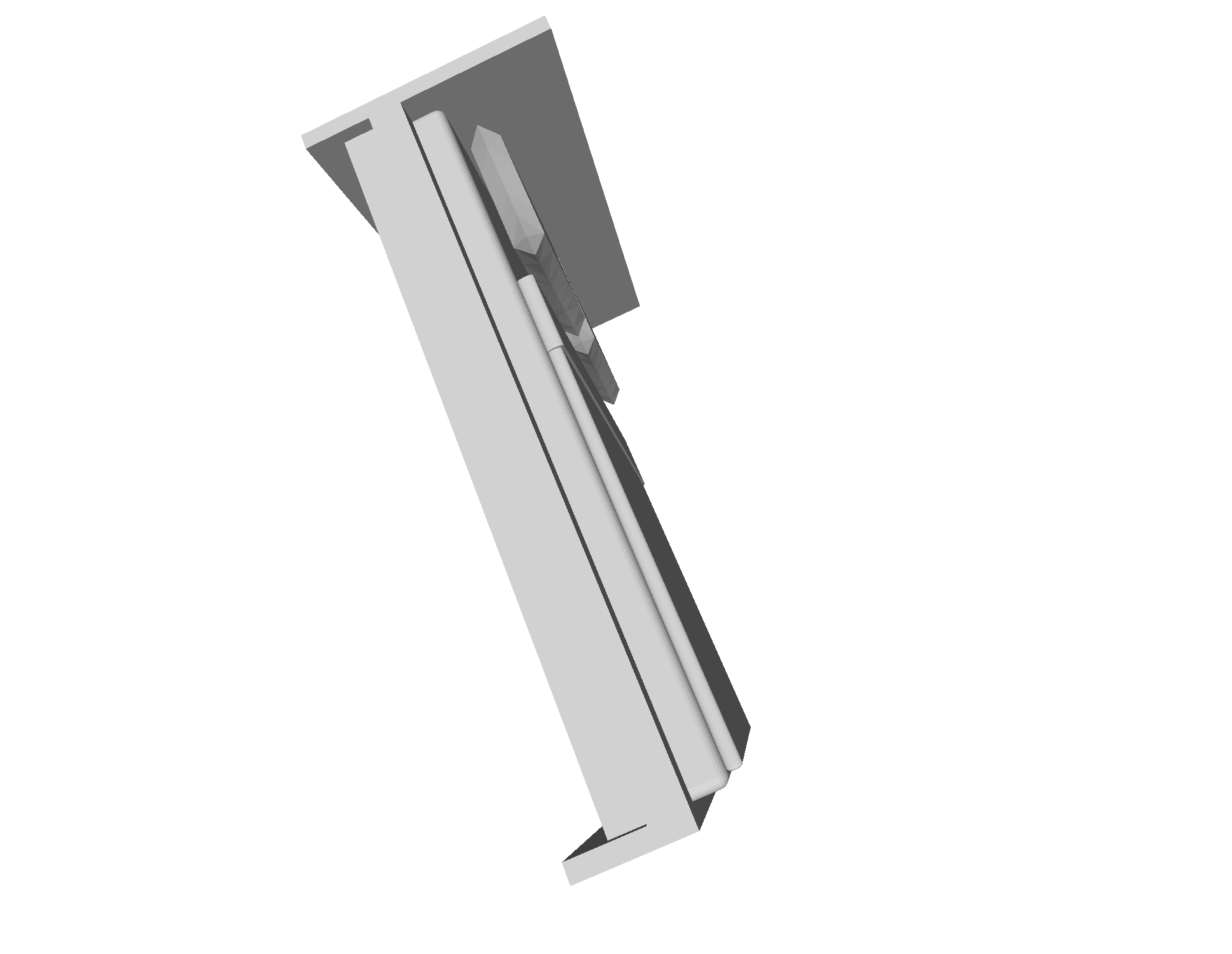}
    \end{minipage}
    \begin{minipage}[t]{0.15\textwidth}
    \includegraphics[width = \textwidth]{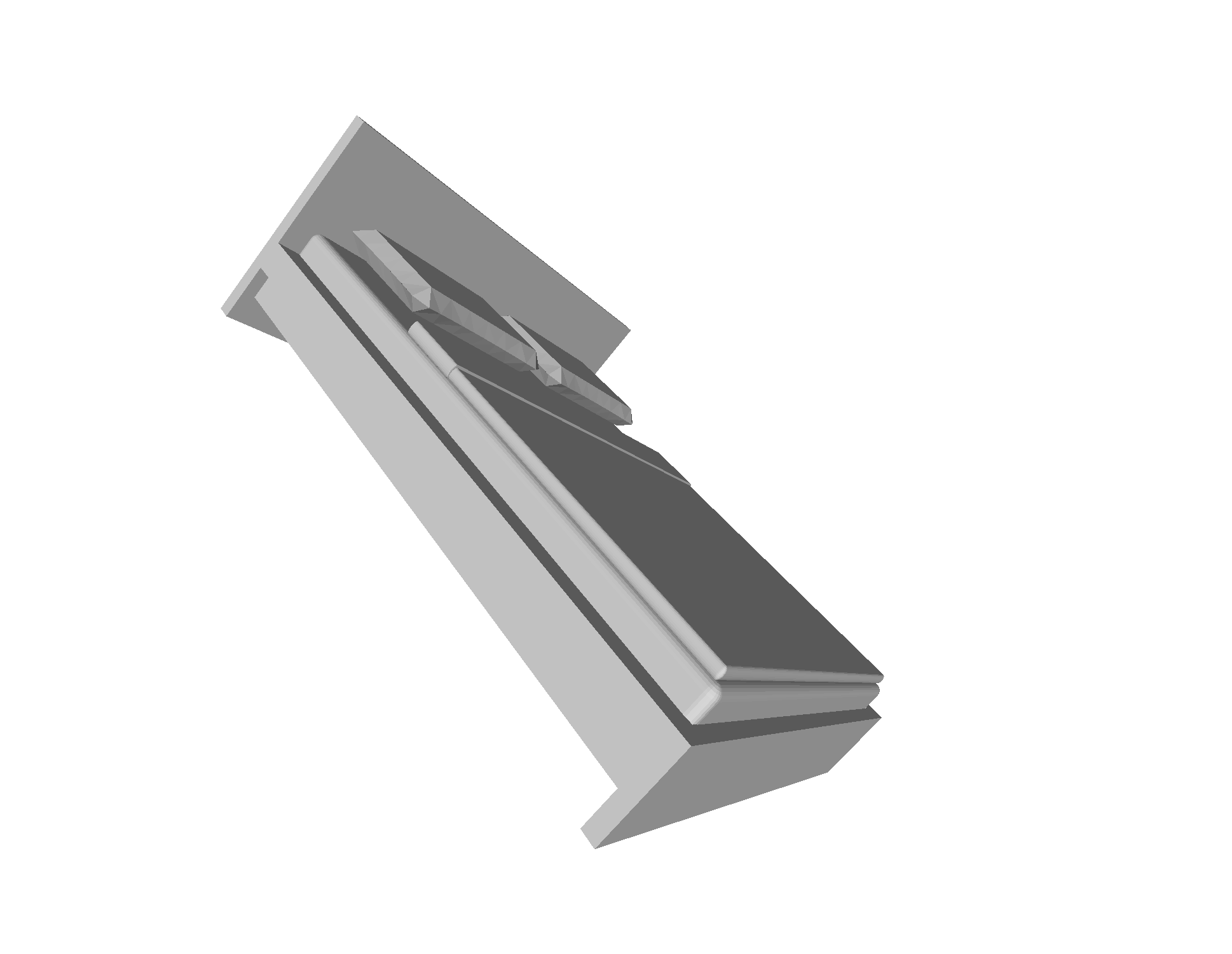}
    \end{minipage}
    \begin{minipage}[t]{0.15\textwidth}
    \includegraphics[width = \textwidth]{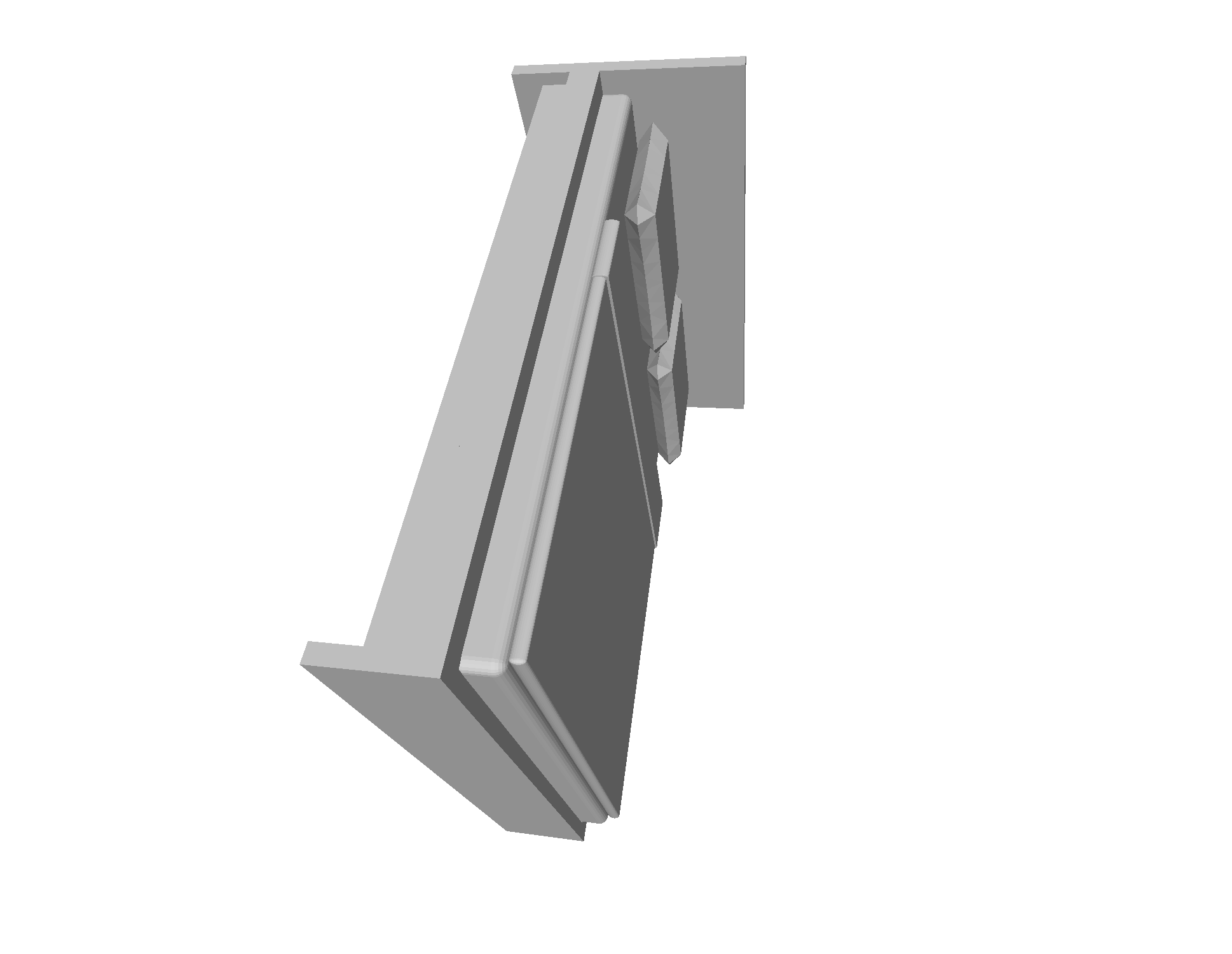}
    \end{minipage}
}
\caption{Sampled rotation matrices from a matrix Fisher distribution with a varying parameter $F_0$, applied to example objects in the ModelNet10 database.}

\label{CADmodels}
\end{figure}
\begin{table}[ht]
\centering
\caption{Geodesic distance between the mode of estimated $F$ ($\hat{F}_{\ksd}$ and $\hat{F}_{\mle}$) and the ground truth orientation of the samples shown in Fig ~\ref{CADmodels}}
\begin{tabular}{clrrr}
\hline
\multicolumn{1}{l}{}                                    &       & left    & middle    & right    \\ \hline
\multicolumn{1}{c|}{\multirow{2}{*}{(a) $F_0=I$}}       & MKSDE & \textbf{1.53} & \textbf{2.82} & \textbf{3.01} \\
\multicolumn{1}{c|}{}                                   & MLE   & 1.54 & 3.02 & 3.07 \\ \hline
\multicolumn{1}{c|}{\multirow{2}{*}{(b) $F_0= 5 * I$}}       & MKSDE & \textbf{0.93} & \textbf{1.55} & \textbf{1.95} \\
\multicolumn{1}{c|}{}                                   & MLE   & 2.66 & 2.91 & 3.01 \\ \hline
\multicolumn{1}{c|}{\multirow{2}{*}{(c) $F_0 = \text{diag}(0.1,0.2,0.3)$}} & MKSDE & \textbf{1.62} & \textbf{1.65} & \textbf{1.64} \\
\multicolumn{1}{c|}{}                                   & MLE   & 1.76 & 1.79 & 1.78 \\ \hline
\end{tabular}
\label{distance}
\end{table}

We now consider the estimated $F$ ($\hat{F}_{\ksd}$ and $\hat{F}_{\mle}$) obtained from a sample size of 500. In Table ~\ref{distance}, we report the geodesic distance between the mode of ground truth orientation for the samples shown in Fig ~\ref{CADmodels} and the mode of estimated $F$. From the table, it is evident that the gedesic distance of MKSDE estimates are consistently smaller than those for MLE indicating a superior performance of MKSDE.

\subsection{MKSDE goodness of fit test}

In this experiment, we measure the difference between a specific Cayley distribution to the vMF family. The Cayley distribution \cite{leon2006statistical} has the density \(p(X|M)\propto \det(I+XM^\top)^\kappa \) with parameters \(M\in\SO(3)\) and \(\kappa>0\). As the vMF family is rotationally symmetric, the parameter \(M\) does not affect the dissimilarity between a specific Cayley distribution and the vMF family. We used the R package \cite{bryan2014rotations} to generate \(n=500\) samples from the Cayley distribution for varying \(\kappa\), and perform the goodness of fit test in \S\ref{Goodness-of-Fit} with different levels of significance \(\beta\). 

Table \ref{Goodness-of-Fit} depicts the \((1-\beta)\)-quantile and the statistic \(n \wksd^2_n(\hat{\theta}_n)\). As discussed in \cite{leon2006statistical}, the Cayley distribution resembles a uniform distribution for small \(\kappa\), and a spiky local Gaussian distribution for large \(\kappa\). It approximately belongs to the vMF class in both cases, but differs from vMF for \(\kappa\) in-between. This coincides with the results in table \ref{Goodness-of-Fit}. 

\begin{table}[ht]
\centering
\caption{MKSDE goodness of fit}
\label{Goodness-of-Fit}
\begin{tabular}{crrrrr}
\hline
\(\kappa\) & 0.2      & 0.5      & 1.0        & 1.5      & 2.0 \\
\hline
\(n\cdot \wksd^2_n(\hat{\theta}_n)\)   & 68.65 & 90.25 & 99.99 & 99.29 & 104.23 \\
\hline
\(\beta = 0.01\)     & 83.98 & 94.23 & 108.18 & 116.88 & 139.76 \\
\(\beta = 0.05\)     & 77.62  & 83.21 & 96.99 & 104.18 & 114.47 \\
\(\beta = 0.10\)     & 73.22 & 77.79  & 90.71 & 96.46 & 107.12\\\hline
\end{tabular}
\end{table}

\section{Conclusions}\label{conclusion}

In this paper, we presented a novel Stein's operator defined on Lie Groups leading to a kernel Stein discrepancy (KSD) which is a normalization-free loss function. We presented several theoretical results characterizing the properties of this new KSD on Lie groups and the MKSDE. We presented new theorems on MKSDE being strongly consistent and asymptotically normal, and a closed form expression of the MKSDE for the exponential family and in particular the vMF distribution and the Riemannian normal distribution on \(\SO(N)\). Furthermore, we presented two algorithms, namely MKSDE goodness of fit and KSD-EM, for measuring the quality of model fitting and distributional parameter estimation with latent variables respectively. Finally, we presented several experiments demonstrating the advantages of MKSDE over MLE. Our future work will focus on exploring the practical implications of the choice of $C_0$-universal kernels in characterizing the Stein class.


\section*{Acknowledgments}This research was in part funded by the NIH NINDS and NIA grant RO1NS121099 to Vemuri.

\appendix
\label{Appendix}
\section{Proof of theorems}\label{Proofs}

\subsection{Proof of theorem \ref{KSDCharacterization} }\label{Proof-3.4}

\begin{theorem*}[Characterization] Suppose \(k\) is \(C_0\)-universal and \(\sqrt{k_p(x,x)}\) is integrable with respect to locally Lipschitz densities \(p\) and \(q\). Suppose further \(D^l \log(p/q)\) is \(p\) and \(q\)-integrable for all \(l\). Then \(p=q\iff\ksd(p,q)=0 \).
\end{theorem*}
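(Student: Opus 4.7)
The easy direction, $p=q \Rightarrow \ksd(p,q)=0$, is immediate once one has Stein's identity for $\mathcal{A}_p$ on $G$: for every $f \in \mathcal{H}_k^d$, $\mathbb{E}_p[\mathcal{A}_p f]=0$. On a (possibly non-unimodular) Lie group this identity is precisely why $\mathcal{A}_p$ must carry the $D^l\Delta$ correction; it follows from the integration-by-parts formula for left-invariant vector fields against the left-invariant measure $\mu$, applied to the compactly supported (or appropriately decaying) scalar field $f_l p$. Granted this, when $p=q$ we have $\mathbb{E}_q[\mathcal{A}_p f]=0$ for all $f$ in the unit ball, so the supremum defining KSD is zero.

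For the nontrivial direction, I start from $\ksd(p,q)=0$, which forces $\mathbb{E}_q[\mathcal{A}_p f]=0$ for all $f$ in the unit ball and hence, by homogeneity, for all $f\in\mathcal{H}_k^d$. Subtracting Stein's identity for $q$, namely $\mathbb{E}_q[\mathcal{A}_q f]=0$, the $D^l f_l$ and $f_l\, D^l\Delta$ terms cancel and we are left with
\[
  \sum_{l=1}^d \int_G f_l(x)\, [D^l\log(p/q)](x)\, q(x)\, \mu(dx) \;=\; 0 \qquad \forall\, f\in\mathcal{H}_k^d.
\]
The hypotheses that $D^l\log(p/q)$ is $q$-integrable make each $\nu_l := [D^l\log(p/q)]\,q\,d\mu$ a finite signed Radon measure on $G$; testing the display with $f$ having one nonzero slot at a time yields $\int_G f_l\, d\nu_l=0$ for every $f_l\in\mathcal{H}_k$ and every $l$.

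Now I invoke $C_0$-universality of $k$: $\mathcal{H}_k$ is dense in $C_0(G)$ in the supremum norm, so by Riesz representation the only element of $M(G)=C_0(G)^\ast$ annihilating all of $\mathcal{H}_k$ is the zero measure. Hence $\nu_l=0$, i.e.\ $D^l\log(p/q)=0$ $\mu$-a.e.\ on $\{q>0\}$ for every $l$. Since $\{D^l\}_{l=1}^d$ is a left-invariant basis and therefore spans the tangent space at every point of $G$, the locally Lipschitz function $\log(p/q)$ has vanishing derivative in every direction on the open set where both densities are strictly positive, hence is locally constant there. Connectedness of $G$ then promotes locally constant to globally constant, giving $p=cq$; integrating against $\mu$ forces $c=1$, so $p=q$.

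The step I expect to require the most care is the passage from $\nu_l=0$ to $p=q$, specifically a clean handling of the supports $\{p>0\}$ and $\{q>0\}$ and the convention $D^l\log p := 0$ on $\{p=0\}$. One must rule out pathological scenarios where $p$ and $q$ have mismatched zero sets yet $D^l\log(p/q)$ still vanishes $q$-almost everywhere. The combination of local Lipschitz regularity of $p,q$, connectedness of $G$, and the fact that the left-invariant frame $\{D^l\}$ spans the tangent space everywhere is what rescues the argument, but writing it down without circularity (the expression $\log(p/q)$ is only literally defined on $\{p>0\}\cap\{q>0\}$) is the delicate bookkeeping.
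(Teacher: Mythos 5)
Your proposal is correct and takes essentially the same route as the paper: the forward direction is Stein's identity for $\mathcal{A}_p$ obtained by integration by parts of $\Div(p f_l D^l)$ against $\mu$, and the reverse direction subtracts Stein's identity for $\mathcal{A}_q$, uses $C_0$-universality (density of $\mathcal{H}_k$ in $C_0(G)$) to conclude $D^l\log(p/q)=0$ for all $l$, and then connectedness of $G$ to get $p=q$. The only point where the paper is more explicit than your ``compactly supported (or appropriately decaying)'' remark is the justification that no boundary term survives for the non-compactly-supported field $\sum_l p f_l D^l$: it combines the bound $|\mathcal{A}_p f(x)|\leq \Vert f\Vert_{\mathcal{H}_k}\sqrt{k_p(x,x)}$ with the assumed integrability of $\sqrt{k_p(x,x)}$ and the boundedness of RKHS elements, and then invokes Gaffney's generalized Stokes theorem on the complete manifold $G$, which requires only integrability of the field and of its divergence rather than compact support.
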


\begin{proof}
We begin with the forward implication, "$\Rightarrow$". By \cite[Cor. 4.36]{steinwart2008support}, we have $|\mathcal{A}_p f(x)|\leq \Vert f\Vert_{\mathcal{H}_k}\cdot \sqrt{k_p(x,x)}$ for $f\in\mathcal{H}_k$. Moreover, the RKHS of a $C_0$-universal kernel contains only $C_0$, thus bounded functions, see \cite[Prop. 2.3.2]{carmeli2010vector}. Therefore, for all $f\in\mathcal{H}_k$,  $f,\mathcal{A}_p f$ are both $q$-integrable. Let $\Div$ be the divergence operator with respect to the left-invariant Riemannian metric. Recall that  $\Div(D^l)=D^l\Delta$, thus $\Div( p f_l D^l ) = p\cdot f_l D^l\Delta+ p\cdot D^l f_l+f_l\cdot D^l p=\mathcal{A}^l_p f_l\cdot p $. By generalized Stokes's theorem
\cite{gaffney1954special}, if an integrable vector field on a complete manifold has integrable divergence, then its divergence integrates to $0$. Therefore, $\mathbb{E}_p[\mathcal{A}_p f]=\int_G \mathcal{A}_p f\cdot p d\mu=\int_G \Div(\sum_{l=1}^d p f_l D^l) d\mu = 0 $. For readers unfamiliar with the divergence operator or Stokes's theorem, this proof maybe be interpreted as a general version of integration by parts on manifolds.

Now, we address the reverse implication, "$\Leftarrow$". Note that $\mathbb{E}_q[\mathcal{A}_p f]=\mathbb{E}_q[\mathcal{A}_p f-\mathcal{A}_q f]=\sum_l \mathbb{E}_q[f_l  D^l\log(p/q)]  $, since the operator $\mathcal{A}_q$ corresponding to $q$ also has the Stein's identity. If $k$ is $C_0$-universal, then $\mathcal{H}_k$ is dense in the space of continuous functions vanishing at infinity \cite[Thm. 4.1.1]{carmeli2010vector}. Since $\mathbb{E}_q[f^l D^l\log(p/q)]=0$ for all $f\in \mathcal{H}^d_k$, we have $D^l \log(p/q)=0$ for all $l$. As we assume $G$ is connected, $\log(p/q)$ is constant and thus $p=q$.
\end{proof}

\subsection{Proof of theorem \ref{KSDasymptotic-stronger}}\label{Proof-4.2}

\begin{theorem*} Suppose \(V_{(\cdot)}(\cdot,\cdot)\) is jointly continuous and satisfies that \(\sup_{\theta\in K} V_\theta(x,x)\) is \(w\)-integrable for any compact \(K\subset\Theta\), then \(\wksd^2_n(\theta)\to \ksd^2(\theta) \) compactly almost surely, i.e., for any compact \(K\),
\[ \wksd^2_n(\theta) \to \ksd^2(\theta) \text{ uniformly on }K, \quad\text{almost surely}. \]
As a corollary, if \(\Theta\) is locally compact, \(\wksd_n\) and \(\ksd\) are all continuous on \(\Theta\).
\end{theorem*}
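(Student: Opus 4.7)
The plan is to reduce compactly uniform almost-sure convergence to a uniform strong law of large numbers for $V$-statistics, the pointwise part being supplied by Theorem~\ref{KSDasymptotic}(1). The essential analytic input is a jointly integrable envelope for the family $\{V_\theta:\theta\in K\}$, obtained by exploiting positive-definiteness. Since $k_{p_\theta}$ is an RKHS inner product (Theorem~\ref{KSDFormThm}) and pointwise multiplication by the rank-one kernel $q(x)q(y)/[w(x)w(y)]$ preserves positive-definiteness, Cauchy--Schwarz and AM--GM give
\[
\sup_{\theta\in K}|V_\theta(x,y)|\;\leq\;\tfrac{1}{2}\bigl[\sup_{\theta\in K}V_\theta(x,x)+\sup_{\theta\in K}V_\theta(y,y)\bigr]\;=:\;M(x,y),
\]
and $M\in L^1(w\otimes w)$ by the hypothesis on $\sup_{\theta\in K}V_\theta(x,x)$.

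I would then split $\wksd_n^2(\theta)=\tfrac{n-1}{n}U_n(\theta)+\tfrac{1}{n^2}\sum_{i=1}^n V_\theta(x_i,x_i)$, where $U_n$ is the associated $U$-statistic. The diagonal term is bounded by $\tfrac{1}{n}\cdot\tfrac{1}{n}\sum_i\sup_{\theta\in K}V_\theta(x_i,x_i)$; the second factor converges almost surely to $\mathbb{E}_w[\sup_{\theta\in K}V_\theta(X,X)]<\infty$ by the ordinary SLLN, so the diagonal contribution is $O(n^{-1})$ uniformly in $\theta\in K$ almost surely.

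The $U$-statistic piece would be handled by the standard bracketing argument. For each $\epsilon>0$, define the oscillation modulus $\omega_\delta(x,y):=\sup\{|V_\theta(x,y)-V_{\theta'}(x,y)|:\theta,\theta'\in K,\,d(\theta,\theta')<\delta\}$; joint continuity forces $\omega_\delta(x,y)\downarrow 0$ pointwise as $\delta\downarrow 0$, and $\omega_\delta\leq 2M$ supplies an integrable dominant, so dominated convergence delivers some $\delta>0$ with $\iint\omega_\delta(x,y)\,w(x)w(y)\,d\mu(x)\,d\mu(y)<\epsilon$. Cover $K$ by finitely many $\delta$-balls centered at $\theta_1,\ldots,\theta_N$; on the intersection of the full-measure events on which the SLLN holds for the $U$-statistics at $\theta_1,\ldots,\theta_N$ and for the kernel $\omega_\delta$, for any $\theta\in K$ pick a nearby center $\theta_k$ and apply a triangle inequality: the discrete oscillation $|U_n(\theta)-U_n(\theta_k)|$ is controlled by the $U$-statistic of $\omega_\delta$, while the analytic oscillation $|\ksd^2(\theta_k)-\ksd^2(\theta)|$ is at most $\iint\omega_\delta\,w\otimes w<\epsilon$. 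Letting $n\to\infty$ and then taking $\epsilon$ along a rational sequence yields $\sup_{\theta\in K}|U_n(\theta)-\ksd^2(\theta)|\to 0$ almost surely, and combining with the diagonal estimate proves the theorem.

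The corollary is then immediate: $\wksd_n^2$ is a finite sum of jointly continuous maps of $\theta$ and is therefore continuous on $\Theta$; and $\ksd^2$ is continuous on every compact neighborhood by dominated convergence with envelope $M$, hence everywhere under local compactness. The main obstacle I anticipate is the bracketing step, in particular ensuring measurability of $\omega_\delta$ and an integrable envelope for its $V$-statistic form; separability of the compact metric space $K$ lets the supremum defining $\omega_\delta$ be taken over a countable dense subset, and the bound $\omega_\delta\leq 2M$ provides the needed integrability for invoking the pointwise SLLN.
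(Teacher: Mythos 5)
Your proof is correct, and while it starts from the same decomposition as the paper ($\wksd^2_n=\frac{n-1}{n}U_n+n^{-2}\sum_i V_\theta(x_i,x_i)$) and the same key integrability input (positive-definiteness of $V_\theta$ giving the envelope $\sup_{\theta\in K}|V_\theta(x,y)|\leq M(x,y)$ -- the paper uses the product form $\sup_\theta V_\theta(x,x)^{1/2}\sup_\theta V_\theta(y,y)^{1/2}$ with Jensen, you use the AM--GM form), the machinery you use for uniformity is genuinely different. The paper outsources the uniform convergence to two cited results -- Rubin's uniform SLLN for random functions for the diagonal term and the Yeo--Johnson uniform SLLN for $U$-statistics for the off-diagonal term -- and spends its effort verifying their equicontinuity hypotheses via joint uniform continuity on sets $K\times G_n\times G_n$ from a compact exhaustion of $G$, plus a dominated-convergence argument for continuity of $\mathbb{E}_{y\sim w}[V_{(\cdot)}(\cdot,y)]$. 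You instead give a self-contained argument: the diagonal term is killed by the crude bound $n^{-1}\cdot n^{-1}\sum_i\sup_{\theta\in K}V_\theta(x_i,x_i)=O(n^{-1})$ a.s.\ via the ordinary SLLN (simpler than the paper, which proves a full uniform SLLN for this term only to multiply it by $n^{-1}$), and the $U$-statistic term is handled by a finite $\delta$-net plus the oscillation kernel $\omega_\delta$, needing only Hoeffding's pointwise SLLN for $U$-statistics at finitely many centers and at $\omega_\delta$. What your route buys is transparency and slightly weaker hypotheses (you only use continuity in $\theta$ for fixed $(x,y)$ together with the integrable envelope, not joint continuity in $(\theta,x,y)$, which you correctly keep only for the measurability-via-separability of $\omega_\delta$ and $M$); what the paper's route buys is brevity by citation. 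Your corollary argument (continuity of $\ksd^2$ by dominated convergence with envelope $M$ on compact neighborhoods) is also a direct substitute for the paper's ``uniform limit of continuous functions'' argument, and both are fine.
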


We prove a lemma first.

\begin{lemma}\label{Uniform-LLN}
Suppose $f(\theta,x)$ is continuous on $\Theta\times G$ and $\sup_{\theta\in K} |f(\theta,x)|$ is $P$-integrable for all compact $K\subset\Theta$. 
Then
\[n^{-1}\sum_{i=1}^n f(\theta,X_i)\to F(\theta):= \mathbb{E}[f(\theta,X_1)],\quad X_i \overset{ \text{\tiny i.i.d}}{\sim} P.
\]
compactly $P$-almost surely on $\Theta$ and thus $F(\theta)$ is continuous.
\end{lemma}

\begin{proof}The lemma relies on a classical results \cite[Thm. 1]{rubin1956uniform} regarding the uniform convergence of random functions. It suffices to check the condition of equi-continuity. Take a sequence of compact sets $G_n$ such that $G=\bigcup G_n$, then the join continuity of $f(\cdot,\cdot)$ will implies the joint uniform continuity of $f(\theta,x)$ on $K\times G_n$, which further implies equi-continuity. 
\end{proof}

Next we prove the theorem:

\begin{proof}
Note that, 
\[
\begin{aligned}
\wksd_n(\theta):= \frac{1}{n^2} \sum_{i=1}^n V_\theta(x_i,x_i)+\frac{n-1}{n}\frac{1}{n(n-1)}\sum_{i\neq j} V_\theta(x_i,x_j).
\end{aligned}
\]

The first term tends to $0$ compactly almost surely by lemma \ref{Uniform-LLN}. For the second term, we apply a result \cite[Thm. 1]{yeo2001uniform} regarding the uniform convergence of $U$-statistic to conclude that
\[
\frac{1}{n(n-1)}\sum_{i\neq j} V_\theta(x_i,x_j)\to \mathbb{E}_{X,Y\sim p}[V_\theta(X,Y)]
\]
compactly almost surely. These two statements together will imply the theorem. It suffices to test the conditions (i), (ii) and (iii) in \cite[Thm. 1]{yeo2001uniform}.

For condition (i), note that $V_\theta(\cdot,\cdot)$ is also a positive definite bivariate function, thus
\[ \sup_{\theta\in K} |V_\theta(x,y)|\leq \sup_{\theta\in K}V_\theta(x,x)^{\nicefrac{1}{2}}\cdot \sup_{\theta\in K}V_\theta(y,y)^{\nicefrac{1}{2}},\]
Recall that the $w$-integrability of $\sup_{\theta\in K}V_\theta(x,x)$ implies the $w$-integrability of $\sup_{\theta\in K}V_\theta(x,x)^{\nicefrac{1}{2}}$ by Jensen's inequality. Therefore, $\sup_{\theta\in K}|V_\theta(x,y)|$ is $w\times w$-integrable. Therefore, the condition (i) holds.

For condition (ii), we take a sequence of compact subset $G_n$ such that $G=\bigcup G_n$.

For condition (iii), note that $K\times G_n\times G_n$ is compact, thus $V_\theta(x,y)$ is uniformly continuous on $K\times G_n\times G_n$, thus equi-continuous in $\theta$ for $(x,y)\in G_n\times G_n$. Next we show the equi-continuity of $\mathbb{E}_{Y\sim P}[V_\theta(x,Y)]$ on $K\times G_n$. Given a fixed $(\theta_0,x_0)\in K\times G_n$, we choose a compact neighborhood $W\subset G$ of $x_0$. Since $V_\theta(x,y)$ is jointly continuous, $V_\theta(x,x)$ is bounded on the compact set $K\times W$ and we denote $C:=\sup_{\theta\in K, x\in W} [V_\theta(x,x)]^{\nicefrac{1}{2}}<+\infty$. For any $\theta\in K$, $x\in W$ and $y\in G$, we have
\[ |V_\theta(x,y)|\leq \sup_{\theta\in K, x\in W} [V_\theta(x,x)]^{\nicefrac{1}{2}}\cdot \sup_{\theta\in K} [V_\theta(y,y)]^{\nicefrac{1}{2}}= C\cdot \sup_{\theta\in K} [V_\theta(y,y)]^{\nicefrac{1}{2}} \]
For any $(\theta',x')\in K\times W$ that tends to $(\theta_0,x_0)$,
we have $|V_{\theta'}(x',Y)-V_{\theta_0}(x_0,Y)|\leq 2 C \sup_{\theta\in K} [V_\theta(y,y)]^{\nicefrac{1}{2}}$. Since the right hand side is a $P$-integrable function of $y$ as the condition states, we have
\[
|\mathbb{E}_{Y\sim P}[V_{\theta'}(x',Y)]-\mathbb{E}_{Y\sim P}[V_{\theta_0}(x_0,Y)]|\leq \mathbb{E}_{Y\sim P}|V_{\theta'}(x',Y)-V_{\theta_0}(x_0,Y)|\to 0, \quad \text{as } (\theta',x')\to(\theta_0,x_0).
\]
Therefore, $\mathbb{E}_{Y\sim w}[V_{(\cdot)}(\cdot,Y)]$ restricted on $K\times G$ is jointly continuous at $(\theta_0,x_0)$, and thus continuous on $K\times G$ by the arbitrariness of $\theta_0$ and $x_0$. It is thus uniformly continuous on $K\times G_n$ and hence the condition of equi-continuity holds.

The continuity of $\wksd_n$ is straightforward since it is a finite summation of continuous functions. Since $\ksd$ is the uniform limit of $\wksd_n$ on any compact set, $\ksd$ is continuous on any compact set. Therefore, $\ksd$ is continuous as continuity is a local property. This completes the proof.
\end{proof}

\subsection{Proof of theorem \ref{MKSDEconsistency}}\label{Proof-4.3}

\begin{theorem*}[Strong consistency]Suppose the conditions in theorem 4.2 hold and \(\Theta=\Theta_1\times\Theta_2\) such that \(\Theta_1\) is compact, \(\Theta_2\) is convex, and for each fixed \(\theta_1\in\Theta_1\), \(\wksd_n(\theta_1,\cdot)\) is convex on \(\Theta_2\) and \(\ksd(\theta_1,\cdot)\) attains minimum value on a non-empty and compact set \(\Tilde{\Theta}_0(\theta_1)\subset \text{int}(\Theta_2)\). Then \(\Theta_0\), \(\widehat{\Theta}_n\) are non-empty for large \(n\) and \(\sup_{\theta\in \widehat{\Theta}_n} d(\theta,\Theta_0)\to 0\) almost surely.
\end{theorem*}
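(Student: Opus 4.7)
The plan is to combine the compact a.s.\ uniform convergence of $\wksd_n^2$ to $\ksd^2$ guaranteed by Theorem~\ref{KSDasymptotic-stronger} with the $\theta_2$-convexity hypothesis. The crucial preparation is to trap every $\tilde\Theta_0(\theta_1)$, $\theta_1\in\Theta_1$, inside a single compact set $K\subset\mathrm{int}(\Theta_2)$; once this is done, all the analysis reduces to the compact product $\Theta_1\times K$ on which Theorem~\ref{KSDasymptotic-stronger} delivers genuine uniform convergence and the remaining arguments become standard Argmin-continuity.

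\textbf{Step 1 (uniform confinement of $\tilde\Theta_0$).} Fix $\theta_1^0\in\Theta_1$, pick a closed ball $B\subset\mathrm{int}(\Theta_2)$ containing $\tilde\Theta_0(\theta_1^0)$ strictly in its interior, and set $v(\theta_1):=\inf_{\theta_2\in\Theta_2}\ksd^2(\theta_1,\theta_2)$. Since $\ksd^2(\theta_1^0,\cdot)$ is convex (as the pointwise limit of the convex $\wksd_n^2(\theta_1^0,\cdot)$) with minimum set exactly $\tilde\Theta_0(\theta_1^0)\subset\mathrm{int}(B)$, the boundary gap $\delta:=\min_{\theta_2\in\partial B}\ksd^2(\theta_1^0,\theta_2)-v(\theta_1^0)$ is strictly positive. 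By the joint continuity of $\ksd^2$ (corollary to Theorem~\ref{KSDasymptotic-stronger}), pick a neighborhood $U$ of $\theta_1^0$ in $\Theta_1$ on which $|\ksd^2(\theta_1,\theta_2)-\ksd^2(\theta_1^0,\theta_2)|<\delta/3$ uniformly in $\theta_2\in B$. For $\theta_1\in U$, any candidate minimizer $\theta_2^*\notin B$ would force the segment from $\theta_2^0\in\tilde\Theta_0(\theta_1^0)$ to $\theta_2^*$ to hit $\partial B$ at some $\theta_2'$ where convexity yields $\ksd^2(\theta_1,\theta_2')\le\ksd^2(\theta_1,\theta_2^0)\le v(\theta_1^0)+\delta/3$, contradicting $\ksd^2(\theta_1,\theta_2')\ge v(\theta_1^0)+2\delta/3$. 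Thus $\tilde\Theta_0(\theta_1)\subset B$ for every $\theta_1\in U$, and compactness of $\Theta_1$ yields a finite union $K$ of such balls with $\tilde\Theta_0(\theta_1)\subset K\subset\mathrm{int}(\Theta_2)$ and a uniform margin $\delta_0>0$ satisfying $\ksd^2(\theta_1,\theta_2)\ge v(\theta_1)+\delta_0$ on $\Theta_1\times\partial K$.

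\textbf{Steps 2--3 (existence of $\Theta_0$, confinement of $\widehat\Theta_n$).} The restriction of $\ksd^2$ to the compact $\Theta_1\times K$ is jointly continuous, so $v(\theta_1)=\min_{\theta_2\in K}\ksd^2(\theta_1,\theta_2)$ is continuous on compact $\Theta_1$ and attains its infimum; hence $\Theta_0$ is non-empty and, being a closed subset of $\Theta_1\times K$, compact. By Theorem~\ref{KSDasymptotic-stronger}, $\wksd_n^2\to\ksd^2$ uniformly a.s.\ on $\Theta_1\times K$, so for all large $n$ one has $\wksd_n^2\ge v+\delta_0/2$ on $\Theta_1\times\partial K$ while $\min_{\theta_2\in K}\wksd_n^2(\theta_1,\theta_2)\le v(\theta_1)+\delta_0/4$. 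The same convexity-along-segments argument applied to the convex $\wksd_n^2(\theta_1,\cdot)$ (joining the $K$-minimizer to any external point through $\partial K$) rules out any global $\theta_2$-minimizer outside $K$, so $\widehat\Theta_n\subset\Theta_1\times K$ and $\widehat\Theta_n\neq\emptyset$ by continuity on the compact $\Theta_1\times K$.

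\textbf{Step 4 (consistency and main obstacle).} The desired $\sup_{\theta\in\widehat\Theta_n}d(\theta,\Theta_0)\to 0$ a.s.\ is then a standard Argmin-continuity statement on $\Theta_1\times K$: on the full-measure event where $\wksd_n^2\to\ksd^2$ uniformly on $\Theta_1\times K$, any sequence $\hat\theta_{n_k}\in\widehat\Theta_{n_k}$ with $d(\hat\theta_{n_k},\Theta_0)\ge\varepsilon$ admits a cluster point $\theta^*\in\Theta_1\times K$, and passing to the limit in $\wksd_{n_k}^2(\hat\theta_{n_k})\le\wksd_{n_k}^2(\theta_0)$ for any $\theta_0\in\Theta_0$ yields $\ksd^2(\theta^*)\le\ksd^2(\theta_0)=\min\ksd^2$, forcing $\theta^*\in\Theta_0$, a contradiction. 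The real obstacle is Step~1: since $\Theta_2$ need only be convex (typically unbounded), Berge's maximum theorem is not directly available, and only by leveraging the \emph{interior} hypothesis $\tilde\Theta_0(\theta_1)\subset\mathrm{int}(\Theta_2)$ together with convexity in $\theta_2$ via the boundary-gap/segment argument can one produce the uniform compact $K$ that makes the later uniform-convergence and subsequence arguments work.
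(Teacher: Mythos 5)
Your proof is correct and rests on the same two pillars as the paper's argument: a convexity-plus-boundary-gap device for trapping the section minimizer sets \(\Tilde{\Theta}_0(\theta_1)\) in a bounded region (the paper's Lemmas \ref{Convex} and \ref{KSDminSet}), and the compact uniform convergence of Theorem \ref{KSDasymptotic-stronger} followed by a standard argmin-consistency subsequence argument. The decomposition, however, is genuinely different: you compactify once and for all, using a finite subcover of the compact \(\Theta_1\) to build a single compact \(K\subset\mathrm{int}(\Theta_2)\) with \(\Tilde{\Theta}_0(\theta_1)\subset\mathrm{int}(K)\) for every \(\theta_1\) and a uniform margin \(\ksd^2(\theta_1,\theta_2)\ge v(\theta_1)+\delta_0\) on \(\Theta_1\times\partial K\); non-emptiness and compactness of \(\Theta_0\), the confinement \(\widehat{\Theta}_n\subset\Theta_1\times K\), and the final limit argument then all live on the compact \(\Theta_1\times K\). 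The paper instead keeps the trapping local (Lemma \ref{KSDminSet}), proves non-emptiness of \(\Theta_0\) by a subsequence argument and compactness by an open-cover argument, and confines \(\widehat{\Theta}_n\) via a margin measured against the global minimum \(m^*_0\) on a neighborhood of \(\Theta_0\) only. Your global-\(K\) construction buys something concrete: since the margin is relative to \(v(\theta_1)\) for \emph{every} \(\theta_1\), each section \(\wksd^2_n(\theta_1,\cdot)\) has an interior point of \(K\) (any element of \(\Tilde{\Theta}_0(\theta_1)\)) whose value beats all boundary values, so the convexity lemma applies to every section; in the paper's Step 3 the low-value interior point is taken from \(\Theta_0\), which supplies such a point only for those \(\theta_1\) whose \(\Theta_0\)-section is non-empty, so your variant is tightest exactly where the paper's argument is loosest. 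Two cosmetic repairs: a closed ball around \(\Tilde{\Theta}_0(\theta_1^0)\) need not fit inside \(\mathrm{int}(\Theta_2)\), so take a small compact neighborhood (e.g.\ a closed \(\rho\)-neighborhood) instead, which leaves the segment/boundary argument unchanged; and the hypothesis gives convexity of \(\wksd_n(\theta_1,\cdot)\) rather than of its square, but since \(\wksd_n\ge 0\) the square is convex as well, so your use of \(\wksd^2_n\) (and of \(\ksd^2\) as its pointwise limit) is legitimate.
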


We now establish some lemmas that will be needed for the proof.

\begin{lemma}\label{Convex} Suppose $f$ is convex and $C$ is a bounded open set. If there exists $x_0\in C$ such that $f(x_0)<\inf_{x\in\partial C} f(x)$, then $\inf_{x\notin C} f(x)= \inf_{x\in \partial C} f$ and the set of global minimizers of $f$ is non-empty and contained in $C$.
\end{lemma}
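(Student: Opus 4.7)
The plan is to exploit convexity of $f$ along the line segments joining the reference point $x_0$ to arbitrary points $y \notin C$, and then to locate the global minimizers using the compactness of $\overline{C}$.

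First I would establish $\inf_{y \notin C} f(y) = m$, where $m := \inf_{x \in \partial C} f(x)$. The inequality $\inf_{y \notin C} f \leq m$ is immediate because $C$ is open, so $\partial C \subseteq C^c$. For the reverse direction, fix any $y \notin C$; if $y \in \partial C$ there is nothing to do, so assume $y \notin \overline{C}$. Since $x_0$ lies in the open set $C$ while $y$ lies in the open set $\mathbb{R}^n \setminus \overline{C}$, a connectedness argument applied to the segment $\{\lambda x_0 + (1-\lambda) y : \lambda \in [0,1]\}$ yields a crossing point $z = \lambda x_0 + (1-\lambda) y \in \partial C$ with $\lambda \in (0,1)$. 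Convexity of $f$ then gives $f(z) \leq \lambda f(x_0) + (1-\lambda) f(y)$, which I would rearrange as
\[
f(y) \;\geq\; \frac{f(z) - \lambda f(x_0)}{1 - \lambda}.
\]
Since $f(z) \geq m$ and $f(x_0) < m$ strictly, the right-hand side strictly exceeds $m$, so $f(y) > m$, and therefore $\inf_{y \notin C} f \geq m$.

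Next, for the minimizer claim, $\overline{C}$ is compact by the boundedness of $C$, and $f$ (being convex and, in the paper's setting, continuous) attains its minimum on $\overline{C}$ at some $x^\ast$. The strict inequality $f(x_0) < m \leq f(x)$ for $x \in \partial C$ rules out $x^\ast \in \partial C$, forcing $x^\ast \in C$. Combined with the first step, $f(x^\ast) \leq f(x_0) < m \leq f(y)$ for every $y \notin C$, so $x^\ast$ is a global minimizer of $f$ over the whole domain. Any other global minimizer $x^{\ast\ast}$ must satisfy $f(x^{\ast\ast}) = f(x^\ast) \leq f(x_0) < m$, which by the first step precludes $x^{\ast\ast} \notin C$; hence the minimizer set is nonempty and contained in $C$.

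The main subtlety is that $C$ itself is only assumed bounded and open, not convex, so one cannot invoke convexity of $C$ when drawing the segment; the convex-combination estimate only needs $[x_0, y]$ to lie in the domain of $f$, which is ensured because that domain (in particular $\Theta_2$ in the paper's intended application) is itself convex. Continuity of $f$ on $\overline{C}$, used for attainment of the minimum, is not automatic for a general convex function on the boundary, but is guaranteed in the paper's use of the lemma by the continuity of $\wksd_n(\theta_1, \cdot)$ established in Theorem \ref{KSDasymptotic-stronger}.
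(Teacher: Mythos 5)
Your proof is correct and follows essentially the same route as the paper's: a point outside $C$ is joined to $x_0$ by a segment crossing $\partial C$, and convexity along that segment forces $f(y) \geq \inf_{\partial C} f > f(x_0)$, from which both claims follow. You spell out the attainment of the minimum on $\overline{C}$ (compactness plus continuity) and the domain/continuity caveats more explicitly than the paper, which leaves that step implicit, but the underlying argument is the same.
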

\begin{proof} For all $x'\notin C$, there exists $0\leq\beta< 1$ such that $\beta x_0+(1-\beta)x'\in \partial C$, and thus 
\[
\begin{aligned}
\beta f(x_0)+(1-\beta) f(x')\geq f(\beta x_0+(1-\beta)x') >f(x_0)\Longrightarrow f(x')>f(x_0).  
\end{aligned}
\]
Furthermore,
\[
\begin{aligned}
f(x')\geq \beta f(x_0)+(1-\beta)f(x') \geq f(\beta x_0+(1-\beta)x')\geq \inf_{x\in \partial C} f(x). 
\end{aligned}
\]
This concludes the proof.
\end{proof}

Note that $\Theta$ is locally compact, thus $\wksd_n$ and $\ksd$ are continuous on $\Theta$ by theorem 4.3. Furthermore, as the pointwise limit preserves the convexity, $\ksd(\theta_1,\cdot)$ is also convex in $\theta_2$ for all $\theta_1\in\Theta_1$. Let $m^*(\theta_1):=\inf_{\theta_2\in\Theta_2}\ksd(\theta_1,\theta_2)$, and denote by $\Tilde{\Theta}_0(\theta_1)$ the set of minimizers of $\ksd(\theta_1,\cdot)$. Then we establish the next lemma.

\begin{lemma}\label{KSDminSet} For each $\Bar{\theta}_1\in\Theta_1$, there exists a neighborhood $V_1$ of $\Bar{\theta}_1$ and a bounded open set $V_2\subset\Theta_2$ such that for any $\theta_1\in V_1$, $\Tilde{\Theta}_0(\theta_1)\subset V_2$.
\end{lemma}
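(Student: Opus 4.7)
The plan is to find a bounded open ``safety net'' $V_2$ that traps $\tilde\Theta_0(\bar\theta_1)$ strictly inside it, use continuity of $\ksd$ to show that the trap still works for parameters $\theta_1$ near $\bar\theta_1$, and finally invoke the convexity Lemma \ref{Convex} to force the new minimizers into the same trap. Concretely, first I would use that $\tilde\Theta_0(\bar\theta_1)$ is compact and contained in $\mathrm{int}(\Theta_2)$ to pick a bounded open set $V_2$ with $\tilde\Theta_0(\bar\theta_1)\subset V_2$ and $\overline{V_2}\subset\mathrm{int}(\Theta_2)$; then $\partial V_2$ is a nonempty compact subset of $\mathrm{int}(\Theta_2)$, disjoint from $\tilde\Theta_0(\bar\theta_1)$.

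Next, set $m^\ast := \ksd(\bar\theta_1,\cdot)\big|_{\tilde\Theta_0(\bar\theta_1)}$ (the common minimum value) and $M := \min_{\theta_2\in\partial V_2}\ksd(\bar\theta_1,\theta_2)$, which is attained by compactness of $\partial V_2$ and continuity of $\ksd$ (Theorem \ref{KSDasymptotic-stronger}). Since no point of $\partial V_2$ lies in $\tilde\Theta_0(\bar\theta_1)$, we get $M>m^\ast$. Fix any $x_0\in\tilde\Theta_0(\bar\theta_1)$ and set $\varepsilon := (M-m^\ast)/3>0$. Using joint continuity of $\ksd$ on $\Theta_1\times\Theta_2$, together with compactness of $\partial V_2$ (so that $\theta_1\mapsto\min_{\theta_2\in\partial V_2}\ksd(\theta_1,\theta_2)$ is continuous at $\bar\theta_1$), choose a neighborhood $V_1$ of $\bar\theta_1$ on which simultaneously
\begin{equation*}
\ksd(\theta_1,x_0) < m^\ast+\varepsilon,\qquad \min_{\theta_2\in\partial V_2}\ksd(\theta_1,\theta_2) > M-\varepsilon.
\end{equation*}

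For every $\theta_1\in V_1$ we then have $\ksd(\theta_1,x_0) < m^\ast+\varepsilon < M-\varepsilon < \inf_{\theta_2\in\partial V_2}\ksd(\theta_1,\theta_2)$, so the hypothesis of Lemma \ref{Convex} holds for the convex function $\ksd(\theta_1,\cdot)$ on the bounded open set $V_2$. The lemma yields that the global minimizer set $\tilde\Theta_0(\theta_1)$ is nonempty and contained in $V_2$, proving the claim.

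The main obstacle is purely bookkeeping: guaranteeing the existence of a bounded $V_2$ whose closure sits in $\mathrm{int}(\Theta_2)$ (so that $\partial V_2$ is a genuine compact barrier rather than brushing against $\partial\Theta_2$) and confirming that the ``min over $\partial V_2$'' map is continuous in $\theta_1$. Both follow from the standing assumption $\tilde\Theta_0(\bar\theta_1)\subset\mathrm{int}(\Theta_2)$ combined with Theorem \ref{KSDasymptotic-stronger}, so no substantial new estimates are needed beyond those already established.
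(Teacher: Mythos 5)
Your proof is correct and follows essentially the same route as the paper: trap $\Tilde{\Theta}_0(\Bar{\theta}_1)$ in a bounded open $V_2$, exploit the positive gap between the minimum value and the infimum of $\ksd(\Bar{\theta}_1,\cdot)$ on $\partial V_2$, transfer both bounds to nearby $\theta_1$ by continuity, and conclude with Lemma \ref{Convex}. The only cosmetic difference is that where you invoke continuity of $\theta_1\mapsto\min_{\theta_2\in\partial V_2}\ksd(\theta_1,\theta_2)$ (valid by joint continuity and compactness of $\partial V_2$), the paper obtains the same lower bound on the boundary infimum via the tube lemma — an interchangeable device.
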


\begin{proof} Take a bounded $V_2$ such that $\Tilde{\Theta}_0(\Bar{\theta_1})\subset V_2$, then since $\ksd(\Bar{\theta}_1,\cdot)-m^*(\Bar{\theta}_1)$ is continuous and positive on $\partial V_2$, let $\epsilon:=\inf_{\theta_2\in\partial V_2}\ksd(\Bar{\theta}_1,\theta_2)-m^*(\Bar{\theta}_1)>0$. Note that $\{\Bar{\theta}_1\}\times\partial V_2$ is a subset of the open set $\{\ksd(\cdot,\cdot)>\epsilon/2+m^*(\Bar{\theta}_1)\} $, thus we apply tube lemma \cite[Lem. 26.8]{munkres2000topology} and get an open neighborhood $V_1$ of $\Bar{\theta}_1$ such that $V'_1\times \partial V_2\subset \{\ksd(\cdot,\cdot)>\epsilon/2+m^*(\Bar{\theta}_1)\}$, which implies that $\inf_{V'_1\times \partial V_2}\ksd > m^*(\Bar{\theta}_1)+\epsilon/2$. However, we choose a  $\Tilde{\theta}_2\in \Tilde{\Theta}_1(\Bar{\theta}_1)\subset V_2$, there exists a neighborhood $V''_1$ of $\Bar{\theta}_1$ such that $\sup_{\theta_1\in V''_1} \ksd(\theta_1,\Tilde{\theta}_2)<m^*+\epsilon/2$, as $\ksd$ is continuous at $(\Bar{\theta}_1,\Tilde{\theta}_2)$. Therefore, for any $\theta_1\in V_1:=V'_1\cap V''$, we have $\ksd(\theta_1, \Tilde{\theta}_2 )< \inf_{\theta_2\in\partial V_2} \ksd(\theta_1,\theta) $. We now apply lemma \ref{Convex}, resulting in the minimizer set $\Tilde{\Theta}_0(\theta_1)$ is contained in $V_2$.
\end{proof}

Now we are ready to prove the theorem. The proof is presented in several stages each of which is required to get to the desired result.

\begin{proof}
1. First we show that $\Theta_0$ is non-empty.

Let $m^*_0:=\inf_{\theta_1\in\Theta_1}m^*(\theta_1)=\inf_{\theta\in\Theta}\ksd(\theta)$. Take a sequence $\theta_{1,n}\in \Theta_1$ such that $m^*(\theta_{1,n})\downarrow m^*_0 $. As $\Theta_1$ is compact, $\theta_{1,n}$ has a convergent sub-sequence, still denoted as $\theta_{1,n}$, whose limit point is denoted as $\theta_{1,0}$. We apply lemma \ref{KSDminSet} to $\Tilde{\Theta}_0(\theta_{1,0})$, thus there exists a compact neighborhood $V_1\times V_2$ of $\{\theta_{1,0}\}\times\Tilde{\Theta}_0(\theta_{1,0}) $ such that for all $\theta_1\in V_1$, $\Tilde{\Theta}_0(\theta_1)\subset V_2$. Since $\ksd$ is continuous on the compact set $\{\theta_{1,0}\}\times\Tilde{\Theta}_0(\theta_{1,0})$ and the size of $V_1\times V_2$ can be taken as small as needed, we can shrink $V_1\times V_2$ such that $\inf_{\theta\in V_1\times V_2} \ksd(\theta)>m^*(\theta_{1,n})-\epsilon.$ However, note that $\theta_{1,n}$ will finally enter $V_1$, and from then on, $\{\theta_{1,n}\}\times\Tilde{\Theta}_0(\theta_{1,n})\subset V_2$, which implies $\inf_{V_1\times V_2}\ksd(\theta)\leq m^*(\theta_{1,n})\downarrow m^*_0$. Therefore, $m^*(\theta_{1,n})-\epsilon<\inf_{V_1\times V_2}\ksd(\theta)\leq m^*(\theta_{1,n})\downarrow m^*_0$, and thus $m^*(\theta_{1,0})=m^*_0$ as $\epsilon$ is arbitrary. Therefore, $\Theta_0$ is non-empty.

2. We now show that $\Theta_0$ is compact. 

Suppose $\{B_\alpha\}\subset\Theta$ is a collection of open balls such that $\Theta_0\subset\bigcup B_\alpha$. For each $\theta_1$, the $\theta_1$-section $\Theta^{\theta_1}_0:=\{(\theta,\theta')\in\Theta_0|\theta=\theta_1\}$ of $\Theta_0$, if non-empty, is exactly the set $\Tilde{\Theta}_0(\theta_1)$. Since $\Theta^{\theta_1}_0=\Tilde{\Theta}_0(\theta_1)$ is compact, there exists a finite subcollection of balls $B^{\theta_1}_i$, $1\leq i\leq n_{\theta_1}$ such that $\{\theta_1\}\times\Tilde{\Theta}_0(\theta_1)\subset \bigcup_i B^{\theta_1}_i $. We apply lemma \ref{KSDminSet} to each $\theta_1$, and conclude there exists open neighborhoods $V^{\theta_1}_1$ and $V^{\theta_1}_2$ such that $\{\theta_1\}\times\Tilde{\Theta}_0(\theta_1)\subset V^{\theta_1}_1\times V^{\theta_1}_2 \subset \bigcup_i B^{\theta_1}_i $. If the section $\Theta^{\theta_1}_0$ is empty, we prescribe $V^{\theta_1}_1$ and $V^{\theta_2}_2$ with arbitrary neighborhood such that $\{\theta_1\}\times\Tilde{\Theta}_0(\theta_1)\subset V^{\theta_1}_1\times V^{\theta_1}_2 $. Note that $\{V^{\theta_1}_1\}_{\theta_1\in\Theta_1}$ is a open cover of $\Theta_1$, thus there exists an open cover $V^{\theta_{1,i}}_1$ for $\theta_{1,i}$, $1\leq i\leq n$. Note that $B^{\theta_{1,i}}_j$, $1\leq i\leq n$, $1\leq j\leq n_{\theta_{1,i}}$ is a finite subcover of $\Theta_0$. Therefore, $\Theta_0$ is compact.

3. We now show that the minimizers $\hat{\theta}_n\in\widehat{\Theta}_n$ will eventually populate a compact set uniformly.

Since $\Theta_0$ is compact, we choose an open set $K\subset\Theta_2$ with compact closure such that $\Theta_0\subset\Theta_1\times K$.  Since $\ksd-m^*_0$ is continuous and positive on the compact set $\Theta_1\times\partial K$, let $\epsilon:=\inf_{\Theta_1\times\partial K}\ksd(\theta)- m^*_0>0$. By theorem 4.2, $\wksd^2_n\to \ksd$ uniformly on $\Theta_1\times \Bar{K}$, which implies for large enough $n$, $\wksd_n > m^*_0+\epsilon/2 $ uniformly on $\Theta_1\times\partial K$ and $\wksd_n < m^*_0+\epsilon/2$ uniformly on $\Theta_0$. Therefore, we can apply lemma \ref{Convex} to each $\wksd_n(\theta_1,\cdot)$, and conclude that $\widehat{\Theta}_n\subset\Theta_1\times K$ for large enough $n$.

4. We now show the theorem.

Since the global minimizers of $\wksd_n$ will eventually populate the compact set $\Theta_1\times\Bar{K}$, WLOG, we assume that $\widehat{\Theta}_n$ is set of minimizers of $\wksd_n$ over $\Theta_1\times\Bar{K}$.

For each $\delta>0$, let $\Theta_\delta:=\{\theta\in \Theta_1\times\Bar{K}:d(\theta,\Theta_0)<\delta\}$ be the closed $\delta$-neighborhood of $\Theta_0$ in $\Theta_1\times \Bar{K}$, and let $\Theta^c_\delta:=\Theta_1\times\Bar{K}-\Theta_\delta$. Note that $\Theta^c_\delta$ is compact, and $\ksd-m^*0$ is positive on $\Theta^c_\delta$, thus we let $\epsilon_\delta:=\inf_{ \Theta^c_\delta} \ksd(\theta)-m^*_0>0$. For $n$ large enough such that $\widehat{\Theta}_n\subset \Theta_1\times \Bar{K}$ and $\sup|\wksd_n-\ksd|<\epsilon_\delta/2$, thus for each $\hat{\theta}_n\in\widehat{\Theta}_n$,
 \[ 
 \begin{aligned}
\ksd(\hat{\theta}_n)-m^*_0 
< \wksd_n(\hat{\theta}_n)+\epsilon_\delta/2-m^*_0 \leq \wksd_n(\theta_0)+\epsilon_\delta/2-m^*_0 < m^*_0+\epsilon_\delta-m^*_0=\epsilon_\delta,    
 \end{aligned}
\]
which implies $\hat{\theta}_n\in\Theta_\delta$, thus proving the theorem.
\end{proof}

\subsection{Proof of theorem \ref{GoF}}\label{Proof-GoF}

Due to proposition \ref{Classical-GoF}, it suffices to show that $\sum_{l=1}^n (\hat{\lambda}'_l-\hat{\lambda}_l) Z^2_l\to 0$ in probability. Note that $\hat{\lambda}'_l$, $\hat{\lambda}_l$, $1\leq l\leq n$, are the eigenvalues of the matrices $H_0:=n^{-1} (V_{\theta_0}(x_i,x_j))_{ij }$ and $H_n:=n^{-1} (V_{\hat{\theta}_n}(x_i,x_j))_{ij}$. The Weilandt-Hoffman inequality \cite[Eq. (1.64)]{tao2023topics} states that
\[
\sum_{l=1}^n | \hat{\lambda}'_l-\hat{\lambda}_l |\leq \Vert H_0-H_n \Vert_1,
\]
where $\Vert H \Vert_1:=\tr\sqrt{H^\top H} $ represents the nuclear norm of a squared matrix $H$. We will show that $\Vert H_0-H_n\Vert_1\to 0$ almost surely, which will imply that $\sum_{l=1}^n(\hat{\lambda}'_l-\hat{\lambda}_l) Z^2_l$ converges to $0$ in probability.

The proof relies on following lemma:
\begin{lemma}\label{Nuclearbound} Suppose $\mathcal{H}$ is a Hilbert space and $x_i,y_i\in \mathcal{H}$, $1\leq i\leq n$ and $\Sigma:=(\langle x_i,y_j\rangle)_{i j}$. Then $\Vert\Sigma\Vert_1\leq \Vert \Vec{x}\Vert \cdot  \Vert \Vec{y}\Vert $. Here $\Vert \Vec{x}\Vert:=\sqrt{\sum_{i=1}^n \Vert x_i\Vert^2 }$ and $\Vert \Vec{y}\Vert$ likewise. Consequently, 
\[
\Vert (\langle x_i,x_j\rangle)_{i j}-(\langle y_i,y_j\rangle)_{i j}\Vert_1\leq  \Vert \Vec{x}-\Vec{y}\Vert \cdot ( \Vert \Vec{x}\Vert+ \Vert \Vec{y}\Vert)
\]
\end{lemma}
\begin{proof} Let $\mathcal{H}_0:=$span$ \{x_1,\dots,x_n,y_1,\dots,y_n\}$ and $m:=\dim\mathcal{H}_0$. Take an orthogonal basis $\{e\}_{i=1}^m$ of $\mathcal{H}_0$ and let $x_i=\sum_{j=1}^m a^j_i e_j$ and $y_i=\sum_{j=1}^m b^j_i e_j$ for $1\leq i\leq n$, and let $A:=(a^j_i)_{i j}$, $B:=(b^j_i)_{i j}$. Then $\Sigma=A B^\top$. Let $\Sigma=P S Q^\top$ be the singular value decomposition of $\Sigma$, then apply the Cauchy-Schwartz inequality 
\[
\begin{aligned}
\Vert \Sigma\Vert_1
= \tr(S)=\tr(P^\top A B^\top Q )\leq \Vert P^\top A\Vert_F \Vert B^\top Q\Vert_F=\Vert A\Vert_F\Vert B\Vert_F.   
\end{aligned}
\]
The first part of the lemma follows from the fact that $\Vert A\Vert_F= \Vert x\Vert^2$ and $\Vert B\Vert_F= \Vert y\Vert^2$. For the second part, just note that
\[
\begin{aligned}
\Vert (\langle x_i,x_j\rangle)_{i j}-(\langle y_i,y_j\rangle)_{i j}\Vert_1
\leq \Vert (\langle x_i,x_j-y_j\rangle)_{i j}\Vert_1+\Vert(\langle x_i-y_i,y_j\rangle)_{i j}\Vert_1
\leq \Vert \vec{x}-\Vec{y}\Vert(\Vert \Vec{x}\Vert+\Vert\vec{y}\Vert ).
\end{aligned}
\]
\end{proof}
Next we prove the theorem.

\begin{proof} Recall the construction in \S\ref{KSD-LG}, the vector $\Vec{\mathcal{A}}_p k_x:=(\mathcal{A}^1_p k_x,\dots,\mathcal{A}_p^d k_x)^\top$ is an element in $\mathcal{H}^d_k$, and
\[
V_p(x_i,x_j)=\left\langle \frac{p(x_i)}{\omega(x_i)}\Vec{\mathcal{A}}_p k_{x_i} ,\frac{p(x_j)}{\omega(x_j)}\Vec{\mathcal{A}}_p k_{x_j} \right\rangle_{\mathcal{H}^d_k}.
\]
For notational simplicity, let $\phi_\theta(x):= \frac{p(x)}{\omega(x)}\Vec{\mathcal{A}}_{p_{\theta}}k_x\in\mathcal{H}^d_k$, then note that $H_0= n^{-1}(\langle\phi_{\theta_0}(x_i),\phi_{\theta_0}(x_j)\rangle)_{i j}$ and $H_n= n^{-1}(\langle\phi_{\hat{\theta}_n}(x_i),\phi_{\hat{\theta}_n}(x_j)\rangle)_{i j}$. Apply the lemma \ref{Nuclearbound} to get
\[
\begin{aligned}
n\Vert H_n-H_0\Vert_1
\leq \left(\sqrt{\sum_{i=1}^n \Vert \phi_{\hat{\theta}_n}(x_i)\Vert^2}+ \sqrt{\sum_{i=1}^n \Vert \phi_{\theta_0}(x_i)\Vert^2 }\right)\cdot \sqrt{ \sum_{i=1}^n\Vert\phi_{\hat{\theta}_n}(x_i)-\phi_{\theta_0}(x_i)\Vert^2 }.
\end{aligned}
\]
Let $f(\theta,x):=\Vert \phi_\theta(x)-\phi_{\theta_0}(x) \Vert^2$, then we can rewrite above inequality as
\begin{equation*}
\begin{aligned}
\Vert H_n-H_0\Vert_1&\leq \Bigg( \sqrt{n^{-1}\sum_{i=1}^n \Vert \phi_{\hat{\theta}_n}(x_i)\Vert^2 }+\sqrt{n^{-1}\sum_{i=1}^n \Vert \phi_{\theta_0}(x_i)\Vert^2} \Bigg)\cdot\sqrt{ n^{-1} \sum_{i=1}^n f(\hat{\theta}_n,x_i) }\\
&\leq \Bigg( \sqrt{ n^{-1} \sum_{i=1}^n f(\hat{\theta}_n,x_i) }+ 2\sqrt{n^{-1}\sum_{i=1}^n \Vert \phi_{\theta_0}(x_i)\Vert^2 } \Bigg)\cdot\sqrt{ n^{-1} \sum_{i=1}^n f(\hat{\theta}_n,x_i) }.
\end{aligned}
\end{equation*}
Clearly, by law of large numbers, 
\[
\begin{aligned}
n^{-1}\sum_{i=1}^n \Vert \phi_{\theta_0}(x_i)\Vert^2= n^{-1}\sum_{i=1}^n V_{\theta_0}(x_i,x_i) \xrightarrow{\omega-\text{a.s}}\int_G V_{\theta_0}(x,x) p_{\theta_0}(x) \mu(dx)<+\infty, 
\end{aligned}
\]
thus $n^{-1}\sum_{i=1}^n \Vert \phi_{\theta_0}(x_i)\Vert^2=O_p(1)$. 

To show $\Vert H_n-H_1\Vert \to 0$ $\omega$-a.s. (almost surely w.r.t. the probability measure $\omega$), it suffices to show $n^{-1}\sum_{i=1}^n f(\hat{\theta}_n,x_i)\to 0$ $\omega$-a.s.. Note that 
\[
f(\theta,x)\leq 2\Vert\phi_\theta(x)\Vert^2+ 2\Vert\phi_{\theta_0}(x)\Vert^2= 2V_{\theta}(x,x)+2V_{\theta_0}(x,x),
\]
thus $\sup_{\theta\in K} f(\theta,x)$ is $\omega$-integrable for any compact set $K\subset\Theta$, as the conditions in theorem \ref{MKSDEconsistency} state. Therefore, we can apply lemma \ref{Uniform-LLN} to conclude that
\begin{equation*}
n^{-1}\sum_{i=1}^n f(\theta,x_i)\to F(\theta):=\int_G f(\theta,x) \omega(d x),
\end{equation*}
compactly $\omega$-a.s. and $F(\theta)$ is continuous. Note that $f(\theta_0,x)=\Vert\phi_{\theta_0}(x)-\phi_{\theta_0}(x)\Vert^2=0$, thus $F(\theta_0)=0$. Note that $\hat{\theta}_n\to\theta_0$ $\omega$-a.s., thus $n^{-1}\sum_{i=1}^n f(\hat{\theta}_n,x_i)\to 0$ $\omega$-a.s..
\end{proof}

\bibliography{References.bib}
\bibliographystyle{IEEEtran}


 
\end{document}